\newcommand{\C}{{\mathbb{C}}}
\newtheorem{theorem}{Theorem}[section]
\newtheorem{corollary}[theorem]{Corollary}
\newtheorem{lemma}[theorem]{Lemma}
\newtheorem{proposition}[theorem]{Proposition}
\newtheorem*{thm}{Theorem}
\newtheorem{definition}[theorem]{Definition}
\numberwithin{figure}{section}
\numberwithin{theorem}{section}
\begin{document}

\author{Heather M. Russell}
\address{Department of Mathematics, Louisiana State University, Baton Rouge, 
LA 70803}
\email{hrussell@math.lsu.edu}
\title{A topological construction for all two-row Springer varieties}

\begin{abstract}
Springer varieties appear in both geometric representation theory and knot theory. Motivated by knot theory and categorification Khovanov provides a topological construction of $(n/2, n/2)$ Springer varieties. We extend Khovanov's construction to all two-row Springer varieties. Using the combinatorial and diagrammatic properties of this construction we provide a particularly useful homology basis and construct the Springer representation using this basis. We also provide a skein-theoretic formulation of the representation in this case.
\end{abstract}

\maketitle

\section{Introduction}
Springer varieties (or Springer fibers) are certain subvarieties of the variety of full flags in $\mathbb{C}^n$. Given a partition $\lambda$ of the number $n$ the Springer variety $\mathcal{S}_{\lambda}$ is the collection of full flags in $\mathbb{C}^{n}$ fixed by a nilpotent linear operator with Jordan blocks given by $\lambda$. Springer varieties were first introduced by Springer who constructed irreducible representations of the symmetric group on their top nonzero cohomology classes \cite{Springer}. This remarkable construction has motivated the study of these varieties by geometric representation theorists (for example see \cite{F, Hotta, KL}).

Springer varieties are also appearing with increasing frequency in the literature on knot homologies and categorification. For $n$ even Khovanov constructs a functor-valued invariant of tangles using an arc algebra $H^{n/2}$ with center isomorphic to the cohomology of the $(n/2, n/2)$ Springer variety \cite{KTan}. In proving this isomorphism he provides a topological construction of  the $(n/2, n/2)$ Springer variety as a subspace of a product of spheres \cite{K}. Springer varieties also appear in Cautis-Kamnitzer's knot homology via derived categories of coherent sheaves \cite{CK}, in Seidel-Smith's link invariant from the symplectic geometry of nilpotent slices \cite{SS}, and in work of Stroppel including \cite{SW}.

The structure of Springer varieties is not well understood. In particular for general classes of Springer varieties the topology of individual components and the interaction of those components in not known. Finding topological models for Springer varieties has the potential to aid geometric representation theory as well as deepen our understanding of the connections between Springer varieties and knot theory.

In previous work we use Khovanov's topological construction to prove an isomorphism between the homology of the $(n/2,n/2)$ Springer variety and the Bar-Natan skein module of the solid torus with boundary web $n$ copies of the longitude \cite{R}. We also use this construction to give a completely  explicit and combinatorial construction of Springer's representation on this class of Springer varieties \cite{RT}.

The main result of this paper is an extension of Khovanov's construction of $(n/2, n/2)$ Springer varieties to all Springer varieties $\mathcal{S}_{\lambda}$ where $\lambda$ is a two element partition of the number $n$ (not necessarily even). This construction can be found in Section 2. The proof that our construction is homeomorphic to the Springer variety follows the structure and approach of \cite[Appendix]{RT}. 

In Section 3 we analyze the intersection of irreducible components of these varieties and build an exact sequence on homology.  In order to construct this sequence we generalize many of the results found in \cite{K}. We use two results from \cite{K}. For completeness we include these proofs in an Appendix to this paper.

Sections 4 and 5 provide two applications of our topological construction of these Springer varieties. The first is a particularly nice diagrammatic homology basis. The second is an extension of the construction of the Springer representation from \cite{RT} to this more general class.  

In response to a question of Stephan Wehrli the following theorem in Section 5 provides a skein-theoretic formulation of the Springer representation in the two-row case.
\begin{thm}
Given a diagrammatic homology generator $M\in H_*(\mathcal{S}_{n-k,k})$ and $\sigma\in S_n$ glue a flattened braid corresponding to $\sigma$ to the bottom of $M$ forming $M'$. 
Then the Springer action $\sigma\cdot M$ is equal to $s(M')$ where $s$ is defined as follows.
\begin{itemize}
\item{$s\left( \raisebox{-.13in}{\includegraphics[width=.3in]{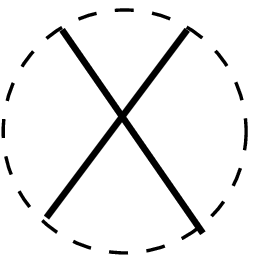}}\right) = s\left( \raisebox{-.13in}{\includegraphics[width=.3in]{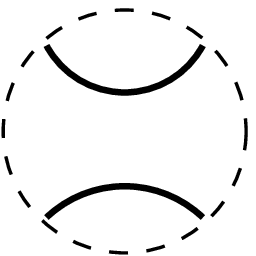}}\right) + s \left( \raisebox{-.13in}{\includegraphics[width=.3in]{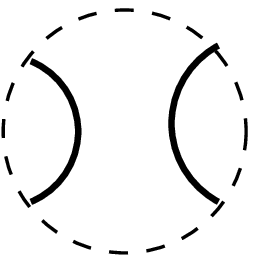}}\right)$}
\item{$s\left( \raisebox{-.15in}{\includegraphics[width=.3in]{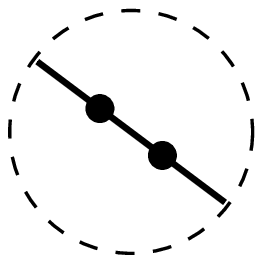}}\right) = 0$}
\item{ $s\left( M' \sqcup \raisebox{-.1in}{\includegraphics[width=.3in]{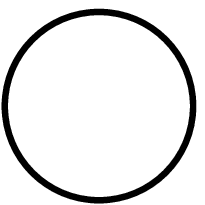}} \right) = s(-2M')$}
\item{$s\left( M' \sqcup \raisebox{-.1in}{\includegraphics[width=.3in]{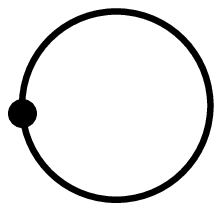}} \right) = s(M')$}
\end{itemize}
\end{thm}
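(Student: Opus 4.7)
The plan is to reduce to the generating case and then use the explicit combinatorial description of the Springer action established earlier in Section 5. Since the symmetric group $S_n$ is generated by adjacent transpositions $s_i$ and since gluing a braid word to the bottom of $M$ is compatible with composition (stacking flattened braids for $\sigma\tau$ is the same as stacking $\sigma$ atop the stacking of $\tau$), it suffices to verify the identity $s_i\cdot M = s(M')$ when $M'$ is obtained from a diagrammatic basis element $M$ by attaching a single crossing between strands $i$ and $i+1$. Once this local statement is checked, iterating the skein relation across every crossing of an arbitrary braid word for $\sigma$ produces a sum of smoothed flat diagrams, each of which is then normalized via the circle and dot relations, and induction on braid length finishes the reduction.

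For the base case, I would first apply the crossing-resolution relation to express $s(M')$ as the sum $s(M'_0)+s(M'_1)$ of the two planar smoothings. Exactly one of these smoothings corresponds to the ``nonseparating'' resolution at positions $i,i+1$ of the cup diagram underlying $M$, and the other to the ``separating'' resolution; depending on whether $i$ and $i+1$ lie on the same arc, on different arcs, or involve a vertical (non-cup) strand of the two-row diagram, one smoothing merges two components and the other splits one or creates a short cap. I would then match each of these two topological operations to the corresponding term in the explicit formula for $s_i\cdot M$ from Section 5, using the circle and dotted-circle relations to absorb any stray closed loops produced by the resolutions into scalars $-2$ or $1$ respectively, and the double-dot relation to kill any diagram in which two dots pile up on the same component.

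The main obstacle is bookkeeping rather than conceptual. The diagrammatic basis for $H_*(\mathcal{S}_{n-k,k})$ introduced in Section 4 is indexed by certain cup diagrams with dots, and the Springer action of $s_i$ from Section 5 is given by a case analysis on the local pattern of the basis element at positions $i$ and $i+1$. I would enumerate these local patterns (both endpoints on cups, one endpoint on a cup and one on a vertical strand, both on verticals; with various dot placements) and verify in each case that the right-hand side produced by the skein relations collapses, after applying the circle and double-dot relations, to exactly the signed combination of basis elements prescribed by the formula from Section 5. The dotted-circle relation is precisely calibrated so that a smoothing that creates a circle carrying a dot contributes the ``generator with the dot pushed onto the surviving arc'' term, while the undotted circle relation produces the $-2$ coefficient that matches the action's diagonal term.

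Finally I would extend from simple transpositions to arbitrary $\sigma$ by checking that the skein-defined map respects the braid relations. Because $s$ is defined purely by local replacements inside a planar region, the Reidemeister-II-type invariance (needed for $s_is_{i+1}s_i=s_{i+1}s_is_{i+1}$ once the overcrossing information is forgotten) follows from the Bar-Natan-style relations imposed on $s$, so the map descends from the free monoid on the $s_i$ to $S_n$ and agrees with the Springer action on generators, hence on all of $S_n$.
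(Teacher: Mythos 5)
The paper states this theorem and then illustrates it with a single worked example; it does not supply a proof (the illustrative example for $(1\,2\,3)$ acting on a generator of $H_2(X_{2,1})$ is all that follows the theorem statement). Your outline is therefore the natural argument that the paper leaves implicit, and its overall shape is sound: reduce to simple transpositions $s_i$ by writing $\sigma$ as a word in the $s_i$ and resolving one crossing of the flattened braid at a time, then verify the $s_i$ case by direct comparison with the case charts from Section~5 (Figures~\ref{snact} and~\ref{rayact}). The local matching you describe is exactly right: the identity smoothing of the crossing at $(i,i{+}1)$ contributes the ``unchanged'' term, the cup--cap smoothing either creates a closed circle (which the $-2$ or $+1$ rule converts into a scalar, and the double-dot rule annihilates when two dots collide) or reconnects arcs so as to produce the other term in the chart, reproducing, e.g., $s_i\cdot M = M - 2M = -M$ in Case~2 where $(i,i{+}1)$ is an undotted arc.

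Two small remarks. First, the proposal is genuinely a plan rather than a proof --- the sentence ``I would enumerate these local patterns \dots and verify in each case'' is precisely the content that would need to be carried out, covering all seven cases in Figures~\ref{snact} and~\ref{rayact}, including the dot-bookkeeping subcases, before the base case is actually established. Second, the final paragraph about checking braid relations is a slight detour: once the $s_i$ case is verified, a straightforward induction on the length of a word $w=s_{i_1}\cdots s_{i_\ell}$ representing $\sigma$ shows $\sigma\cdot M = s(M'_w)$ where $M'_w$ is $M$ with the flattened braid for $w$ attached, because $s_{i_1}\cdot\bigl(\sum_j c_j M_j\bigr)=\sum_j c_j\, s_{i_1}\cdot M_j$ and $s$ can be evaluated by resolving crossings from the bottom up. Since the left side is independent of the chosen word $w$, so is the right side --- well-definedness of $s$ on $S_n$ is a corollary of the theorem rather than a prerequisite, so you do not need to check the braid relation separately.
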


Many thanks to Charlie Frohman and Julianna Tymoczko for their continued support and guidance as well as many helpful conversations related to this work. A significant part of this paper was written in Spring 2010 while attending the Homology Theories of Knots and Links program at MSRI. Thank you to MSRI for this opportunity. The author was also partially supported by NSF VIGRE grant DMS 0739382. 

\section{Extending Khovanov's topological construction}

For $n$ even Khovanov constructs a topological space with cohomology isomorphic to that of the $(n/2,n/2)$ Springer variety and conjectures that the two are actually homeomorphic \cite[Conjecture 1]{K}.  The main ideas behind a proof of this fact can be found in \cite{CK}, and detailed proofs are given in the Appendix of \cite{RT} and independently in \cite{W}. In this section we generalize Khovanov's construction to all two-row Springer varieties. The proof that our construction is homeomorphic to the Springer variety follows that of \cite[Appendix]{RT}.

Let $n\geq1$ be some positive integer.  A complete flag in $\mathbb{C}^n$, denoted by  $V_{\bullet}$, is a collection of nested subspaces
$$V_1\subset V_2\subset \cdots \subset V_{n-1} \subset V_n$$
such that the complex dimension of $V_i$ is $i$. The collection of all such objects is the algebraic variety $\mathcal{F}_n$. Partial flags have the same nesting property but are not required to have subsets in every intermediate dimension.

Let $\lambda = (\lambda_1\geq \lambda_2 \geq \cdots \geq \lambda_k)$ be a partition of the number $n$. Let $\Gamma: \C^{n} \rightarrow \C^n$ be a nilpotent linear operator with Jordan blocks of sizes specified by the partition $\lambda$. Then we have the following definition.
\begin{definition}
The Springer variety associated to the partition $\lambda$ is
$$\mathcal{S}_{\lambda} = \{ V_{\bullet} \in \mathcal{F}_n : \Gamma V_i \subseteq V_i \text{ for all } i\}.$$ 
\end{definition}

We focus on Springer varieties associated to two-element partitions which we call two-row Springer varieties. Note that every two-element partition of $n$ can be written as $(n-k,k)$ for some positive integer $0\leq k\leq \lfloor \frac{n}{2} \rfloor$.    We call $\mathcal{S}_{n-k,k}$ the $(n-k,k)$ Springer variety.

The irreducible components of Springer varieties are indexed by standard Young tableaux \cite{S, V}. In the two-row case, these tableaux are in one-to-one correspondence with noncrossing matchings. The definition below generalizes the one found in \cite[pg 3]{K} and can be found in \cite[Definition 1.2]{SW}.
\begin{definition}
Consider $n$ vertices evenly spaced along a horizontal line. A noncrossing matching of type $(n-k,k)$ is a nonintersecting arrangement of $k$ arcs and $n-2k$ rays incident on the $n$ vertices lying above the horizontal line. We assume that the rays are ``infinitely high" so that arcs cannot cross rays. Figure \ref{ncm} has an example. 
\end{definition}

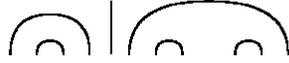
\begin{figure}[h]
\begin{picture}(100,20)(0,0)
\qbezier(10, 0)(10,5)(5,5)
\qbezier(5,5)(0,5)(0,0)
\qbezier(20,0)(20,15)(5,15)
\qbezier(5,15)(-10,15)(-10,0)
\put(28,0){\line(0,1){20}}
\qbezier(35,0)(35,20)(65,20)
\qbezier(65,20)(95,20)(95,0)
\qbezier(55, 0)(55,5)(50,5)
\qbezier(50,5)(45,5)(45,0)
\qbezier(85, 0)(85,5)(80,5)
\qbezier(80,5)(75,5)(75,0)
\end{picture}
\caption{A noncrossing matching of type $(6,5)$}\label{ncm}
\end{figure}

Matchings will be denoted by lowercase letters. For clarity we will make these bold throughout. Let $B^{n-k,k}$ be the set of all noncrossing matchings of type $(n-k,k)$. 
Given $\bf{a}$ $\in B^{n-k,k}$ write $(i,j)\in \bf{a}$ if $\bf{a}$ has an arc connecting vertices $i$ and $j$. Write $(i)\in \bf{a}$ if $\bf{a}$ has a ray incident on the vertex $i$.

Let $N >2n$ be a large fixed integer. Let $X:\mathbb{C}^{2N} \rightarrow \mathbb{C}^{2N}$ be a nilpotent linear operator with two Jordan blocks of size $N$. Let $\{e_1, \ldots, e_N, f_1, \ldots, f_N\}$ be an orthonormal basis 

for $\mathbb{C}^{2N}$ with the property that $Xe_i = e_{i-1}$ and $Xf_i = f_{i-1}$. Here we define $e_{-1} = f_{-1} = 0$.

Consider the following variety $Y_n$ of partial flags in $\mathbb{C}^{2N}$.
$$Y_n = \{ V_{\bullet} = V_1\subset \cdots \subset  V_n : \textup{dim}_{\mathbb{C}}(V_i) = i \textup{ and } XV_i \subseteq V_{i-1}\} $$ From now on flags will be written $(V_1, \ldots, V_n)$ to save space.
For $0\leq k\leq \lfloor \frac{n}{2} \rfloor$, let $$V_{n-k,k} =  \langle e_1, \ldots , e_{n-k}, f_1, \ldots , f_k\rangle$$ and define $ Y_{n-k,k} = \{V_{\bullet} \in Y_n: V_n = V_{n-k,k} \}$.
Since $Y_{n-k,k}$ is the set of all complete flags on $V_{n-k,k}$ fixed by $X$, it is diffeomorphic to $\mathcal{S}_{n-k,k}$.

For $1\leq i\leq n$ define the subvariety $Z_n^i$ of $Y_n$ to be $Z_n^i = \{ V_{\bullet} \in Y_n: V_{i+1} = X^{-1}V_{i-1}\}$.
Define the map
\begin{eqnarray*}
\hspace{2in} Z_n^i &\stackrel{q}{\longrightarrow}&  Y_{n-2} \\
V_{\bullet} = (V_1, \ldots V_n) &\mapsto& V_{\bullet}' = (V_1,\ldots , V_{i-1},XV_{i+2} ,\ldots , XV_n)
\end{eqnarray*} 
This map is a $\mathbb{P}^1$ bundle \cite[pg 5]{CK}. 

Let $p = (0,0,1)$ be the north pole of the standard unit two-sphere $S^2$ embedded in $\mathbb{R}^3$. Let $-p = (0,0,-1)$ be the south pole in $S^2$ where $-p$ is understood as the antipodal map. Given $\bf{a}$ $\in B^{n-k,k}$ define the following subspaces of $(S^2)^n$.
\begin{itemize}
\item{$S_{{\bf a},n-k,k} = \{ (x_1, \ldots, x_n) \in (S^2)^n : x_i = x_j \textup{ if } (i,j)\in {\bf a} \textup{ and } x_i = (-1)^{i}p \textup{ if } (i)\in {\bf a} \}$}
\item{$S_{{\bf a},n-k,k}' =\{ (x_1, \ldots, x_n) \in (S^2)^n : x_i = -x_j \textup{ if } (i,j)\in {\bf a} \textup{ and } x_i = p \textup{ if } (i)\in {\bf a} \} $}
\end{itemize}
Taking unions over all ${\bf a}\in B^{n-k,k}$ define
$$X_{n-k,k} = \bigcup_{{\bf a}} S_{{\bf a},n-k,k} \textup {\hspace{.3in} and \hspace{.3in}} X_{n-k,k}' = \bigcup_{{\bf a}} S_{{\bf a},n-k,k}'.$$

The space $X_{n-k,k}$ is a generalization of Khovanov's construction of the $(n/2,n/2)$ Springer variety \cite[pg 4]{K}. Indeed in the $(n/2, n/2)$ case matchings have no rays, so each component is built by identifying coordinates pairwise as prescribed by the arcs of the associated matching. 

Using the basis $\{e_1, \ldots, e_N,f_1, \ldots, f_N\}$ for $\C^{2N}$ write points in $\mathbb{P}^{2N-1}$ as $\left\langle \sum a_ie_i+ \sum b_if_i \right\rangle$ for $a_i,b_i\in \mathbb{C}$. For $\mathbb{P}^1$ write $e_1 = e$ and $f_1 = f$. 
Define the map $C: \mathbb{P}^{2N-1} \rightarrow \mathbb{P}^1$ by
$$\left\langle \sum a_i e_i + \sum b_i f_i \right\rangle \mapsto \left\langle \left(\sum a_i\right) e + \left( \sum b_i\right)  f\right\rangle .$$ For each $V_{\bullet} \in Y_n$ define lines $L_1, \ldots, L_n$ by $V_i = V_{i-1} \oplus L_i$ and $L_i \perp V_{i-1}$.  Then  \cite[Theorem 2.1]{CK} proves

\begin{proposition}\label{CKProp}
The map $\ell: Y_n \rightarrow (\mathbb{P}^1)^n$ defined by
\[V_{\bullet} \mapsto (C(L_1), C(L_2), \ldots, C(L_n))\]
is a diffeomorphism. Furthermore the image of $Z_n^i$ under the diffeomorphism $\ell$ is exactly the elements in $(\mathbb{P}^1)^n$ 
satisfying $C(L_i) = -C(L_{i+1})$.
\end{proposition}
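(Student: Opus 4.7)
The plan is to exhibit an explicit two-sided inverse of $\ell$ level-by-level, and to deduce the description of $\ell(Z_n^i)$ from the same linear-algebra input. The technical heart is the following claim: for every $V_\bullet \in Y_n$ and every $1 \le i \le n$, the subspace
\[
W_i := X^{-1}(V_{i-1}) \cap V_{i-1}^{\perp}
\]
has complex dimension $2$, and $C$ restricts to an isometry $W_i \xrightarrow{\sim} \langle e, f\rangle = \mathbb{C}^2$ with respect to the standard Hermitian inner products. Granting this, observe that $L_i \subseteq W_i$ (since $XL_i \subseteq XV_i \subseteq V_{i-1}$ and $L_i \perp V_{i-1}$), so $C(L_i) \in \mathbb{P}^1$ is well-defined and $\ell$ actually lands in $(\mathbb{P}^1)^n$. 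Conversely, I would build $r := \ell^{-1}$ inductively: set $V_0 = 0$, and having defined $V_{i-1}$, take $L_i \subseteq W_i$ to be the unique line with $C(L_i) = x_i$ (using that $C$ is a bijection on lines by the claim), and put $V_i := V_{i-1} \oplus L_i$. The conditions $\dim V_i = i$ and $XV_i \subseteq V_{i-1}$ are immediate from $L_i \subseteq W_i$, so $V_\bullet \in Y_n$ and $r$ is a two-sided inverse. Both $\ell$ and $r$ are algebraic, and since $Y_n$ and $(\mathbb{P}^1)^n$ are smooth complex $n$-manifolds, $\ell$ is a biholomorphism, a fortiori a diffeomorphism.

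Proving the claim is the main obstacle. The dimension statement is cheap: iterating $XV_j \subseteq V_{j-1}$ gives $V_n \subseteq \ker X^n = \langle e_1,\dots,e_n, f_1,\dots,f_n\rangle \subseteq \operatorname{image}(X)$ (using $N > 2n$), whence $\dim X^{-1}(V_{i-1}) = (i-1) + \dim \ker X = i+1$ and $\dim W_i = 2$. The isometry assertion I would prove by induction on $i$. The base case is immediate: $W_1 = \ker X = \langle e_1, f_1\rangle$ and $C$ sends this to the orthonormal basis $\{e, f\}$. For the inductive step, let $v$ generate the unit line $L_{i-1}$ inside the (inductively isometric) $W_{i-1}$; then an orthonormal basis of $W_i$ may be produced from an orthogonally-projected preimage of $v$ under $X$ together with a unit vector in $\ker X \cap V_{i-1}^{\perp}$. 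A direct computation, modeled on the toy case $V_1 = \langle \alpha e_1 + \beta f_1\rangle$ with $W_2 = \langle \alpha e_2 + \beta f_2,\ \bar\beta e_1 - \bar\alpha f_1\rangle$ mapping under $C$ to the orthonormal basis $\{\alpha e + \beta f,\ \bar\beta e - \bar\alpha f\}$, then verifies that $C$ sends this basis to an orthonormal basis of $\langle e, f\rangle$. The delicate part is bookkeeping the interaction of the index-shift $X$ with the index-collapsing map $C$ across successive levels of the induction.

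With the claim in hand, the characterization of $\ell(Z_n^i)$ is almost formal. By definition $V_\bullet \in Z_n^i$ iff $V_{i+1} = X^{-1}(V_{i-1})$, equivalently $XL_{i+1} \subseteq V_{i-1}$, equivalently $L_{i+1} \subseteq W_i$. Since $L_i \perp L_{i+1}$ (as $L_{i+1} \perp V_i \supseteq L_i$), this is equivalent to saying $L_i$ and $L_{i+1}$ span the $2$-plane $W_i$ as orthogonal lines. Under the isometry $C|_{W_i}: W_i \xrightarrow{\sim} \mathbb{C}^2$, orthogonal complex lines map to orthogonal complex lines, and orthogonal complex lines in $\mathbb{C}^2$ correspond under the standard $\mathbb{P}^1 \cong S^2$ to antipodal points, so the condition becomes exactly $C(L_{i+1}) = -C(L_i)$, completing the argument.
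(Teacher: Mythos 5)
The paper does not prove this proposition; it cites it directly as \cite[Theorem 2.1]{CK}, so there is no internal argument to compare against. On its own merits your strategy is sound: once one knows $W_i := X^{-1}(V_{i-1})\cap V_{i-1}^\perp$ is two-dimensional and $C$ restricts to an isometry $W_i\to\langle e,f\rangle$, the recursive inverse, the biholomorphism, and the description of $\ell(Z_n^i)$ all follow roughly as you say, and the dimension count is correct (modulo spelling out that $V_{i-1}\subseteq X^{-1}(V_{i-1})$ so $W_i$ is the orthogonal complement of $V_{i-1}$ inside $X^{-1}(V_{i-1})$). But there are two real gaps.

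First, the inductive step for the isometry claim --- which you identify as the heart of the matter --- is not carried out, and the orthonormal basis you propose for $W_i$ does not always exist: a ``unit vector in $\ker X\cap V_{i-1}^\perp$'' can fail to exist, because $\ker X = \langle e_1,f_1\rangle$ may lie entirely inside $V_{i-1}$ (for instance $V_2=\langle e_1,f_1\rangle$ occurs in $Y_n$), making that intersection zero. The vector that actually carries the induction is the orthogonal complement $M_{i-1}$ of $L_{i-1}$ inside $W_{i-1}$, not a vector in $\ker X$: one checks directly that $M_{i-1}\subseteq W_i$ (it is perpendicular to $V_{i-2}$ and to $L_{i-1}$, hence to $V_{i-1}$, and $XM_{i-1}\subseteq V_{i-2}\subseteq V_{i-1}$), and the inductive hypothesis already controls $C$ on $M_{i-1}\subset W_{i-1}$. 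Even with that repair, the computation showing $C$ carries the remaining generator of $W_i$ to a unit vector orthogonal to $C(M_{i-1})$ is exhibited only for $i=2$, and you acknowledge not having done it in general.

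Second, the argument for $\ell(Z_n^i)$ is one-directional as written. The step ``the condition becomes exactly $C(L_{i+1})=-C(L_i)$'' invokes the isometry $C|_{W_i}$, but in the converse direction --- given a flag with $C(L_i)=-C(L_{i+1})$ --- you cannot yet apply $C|_{W_i}$ to $L_{i+1}$, since a priori $L_{i+1}$ lies only in $W_{i+1}$; its membership in $W_i$ is precisely what must be shown. A clean fix: $W_i\cap W_{i+1}$ equals the line $M_i := W_i\cap L_i^\perp$ (one has $M_i\subseteq W_{i+1}$ as above, and $W_i\neq W_{i+1}$ because $L_i\subseteq W_i$ while $L_i\subseteq V_i$ forces $L_i\not\subseteq V_i^\perp\supseteq W_{i+1}$); then $M_i$ and $L_{i+1}$ both lie in $W_{i+1}$, the isometry $C|_{W_i}$ gives $C(M_i)=C(L_i)^\perp=C(L_{i+1})$, and injectivity of $C|_{W_{i+1}}$ forces $L_{i+1}=M_i\subseteq W_i$. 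Alternatively, once $\ell$ is known to be an isomorphism, $\ell(Z_n^i)$ and $\{x_i=-x_{i+1}\}$ are closed irreducible subvarieties of $(\mathbb{P}^1)^n$ of the same dimension with one contained in the other, hence equal.
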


Let $s: S^2-\{ p \}  \rightarrow \C$ be stereographic projection, and let $\varphi: \mathbb{P}^1 -\{ \langle e \rangle \} \rightarrow \C$ be defined by $\varphi(\langle xe + yf \rangle) = x/y.$ Let $\Phi: \mathbb{P}^1 \rightarrow S^2$ be the diffeomorphism defined by
\[\begin{array}{rcl}
	\Phi(\langle xe+yf\rangle) &=& \left\{ \begin{array}{l} s^{-1}\circ \varphi (\langle xe+yf \rangle) \textup{ if }\langle xe+yf \rangle \neq \langle e \rangle \textup{ and } \\
						     (0,0,1) \textup{ if } \langle xe+yf \rangle = \langle e \rangle \end{array} \right. \end{array}\]
						     
Define the diffeomorphism $\widetilde{\Phi}:  (\mathbb{P}^1)^n \rightarrow (S^2)^n$ to be $\widetilde{\Phi}(x_1, \ldots, x_n) = (\Phi(x_1), \ldots , \Phi(x_n))$. Given ${\bf a} \in B^{n-k,k}$ define $C_{{\bf a},n-k,k}$ to be the preimage $\ell^{-1}\circ \widetilde{\Phi}^{-1}(S_{{\bf a},n-k,k}')$. 

For $k>0$ let ${\bf a} \in B^{n-k,k}$ such that $(i,i+1) \in {\bf a}$, and let ${\bf a'} \in B^{n-k-1, k-1}$ be the noncrossing matching obtained from ${\bf a}$ by erasing the arc $(i,i+1)$. If $f: \{1,2,\ldots, n\} \rightarrow \{1,2,\ldots,n-2\}$ is the map
\[\begin{array}{rcl}
	f(j) &=& \left\{ \begin{array}{l} j \textup{ if } j \leq i-1 \textup{ and } \\
						     j-2 \textup{ if } j \geq i+2\end{array} \right. \end{array}\]
then ${\bf a'}$ is comprised of the set of arcs $\{ (f(j), f(j')): (j,j') \in {\bf a} \textup{ and } (j,j') \neq (i,i+1)\}$ and the set of rays $\{(f(j)): (j)\in {\bf a}\}$.  Define the projection map $q': S'_{{\bf a},n-k,k} \rightarrow S'_{{\bf a'},n-k-1, k-1}$ by $q'(x_1, \ldots, x_n) = (x_1,  \ldots, \widehat{x_i}, \widehat{x_{i+1}}, \ldots, x_n)$ where $\widehat{x_j}$ omits the $j^{th}$ coordinate.

\begin{lemma}\label{erase}
For noncrossing matchings ${\bf a}$ and ${\bf a'}$ as above there is a commutative diagram
\[\begin{array}{rcl}
C_{{\bf a},n-k,k} & \stackrel{\widetilde{\Phi} \circ \ell}{\longrightarrow} & S'_{{\bf a},n-k,k} \\
\textup{\tiny{$q$}} \downarrow & & \downarrow {\textup{\tiny{$q'$}}} \\
Y_{n-2} &  \stackrel{\widetilde{\Phi} \circ \ell}{\longrightarrow} & S'_{{\bf a'},n-k-1,k-1}\end{array}\] and the image $q(C_{{\bf a},n-k,k})$ is $C_{{\bf a'},n-k-1,k-1}$.
\end{lemma}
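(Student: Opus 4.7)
The plan is to prove both assertions together by showing that, under $\widetilde\Phi\circ\ell$, the map $q$ corresponds to the coordinate projection $q'$ dropping positions $i$ and $i+1$, and then matching the remaining arc/ray data for $\mathbf{a}$ and $\mathbf{a'}$.

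The first task is to verify the identity
$$C(L'_j) = \begin{cases} C(L_j) & j \leq i-1, \\ C(L_{j+2}) & j \geq i, \end{cases}$$
for every $V_\bullet \in Z_n^i$, where $L'_j$ denotes the orthogonal line for the new flag $V'_\bullet = q(V_\bullet)$. The case $j \leq i-1$ is immediate because $V_{j-1}$ and $V_j$ are unaffected by $q$. For $j \geq i$ the key observation is that $C : \mathbb{P}^{2N-1} \to \mathbb{P}^1$ is $X$-equivariant: since $X$ acts as $e_i \mapsto e_{i-1}$ and $f_i \mapsto f_{i-1}$, it preserves the functionals $\sum a_i$ and $\sum b_i$, and so $C(\langle Xv\rangle) = C(\langle v\rangle)$ whenever $Xv \neq 0$. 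Applying this to a vector $v$ spanning $L_{j+2}$ produces $Xv \in V'_j = XV_{j+2}$; the defining condition $V_{i+1} = X^{-1}V_{i-1}$ of $Z_n^i$ guarantees $Xv \notin V'_{j-1}$, so $\langle Xv\rangle$ is a complement of $V'_{j-1}$ in $V'_j$ whose $C$-image is $C(L_{j+2})$.

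The technical obstacle is that $L'_j$ is the orthogonal complement of $V'_{j-1}$ in $V'_j$, not $\langle Xv\rangle$ itself. I expect to resolve this by showing that the value of $C$ on a complement to $V'_{j-1}$ in $V'_j$ depends only on the class in $V'_j / V'_{j-1}$: any orthogonalization replaces $Xv$ by $Xv + w$ for some $w \in V'_{j-1}$, and since the $Z_n^i$ condition forces $V'_{j-1} \subseteq \operatorname{Image}(X)$ (inductively in $j$), such $w$ are themselves $X$-shifts whose total coefficient sums cancel the adjustment. This is the main obstacle; it amounts to verifying directly that projection onto $(V'_{j-1})^\perp$ respects $C$ on $Z_n^i$.

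Once commutativity is in hand, the image assertion follows in two steps. For the forward containment, if $V_\bullet \in C_{\mathbf{a}, n-k, k}$ then $\widetilde\Phi\circ\ell(V_\bullet) \in S'_{\mathbf{a}, n-k, k}$, and $q'$ drops exactly the coordinates pinned by the arc $(i, i+1)$, leaving a point in $S'_{\mathbf{a'}, n-k-1, k-1}$; by commutativity this equals $\widetilde\Phi\circ\ell(q(V_\bullet))$, so $q(V_\bullet) \in C_{\mathbf{a'}, n-k-1, k-1}$. For the reverse containment, I would use that $q$ is a $\mathbb{P}^1$-bundle \cite{CK}: any $W_\bullet \in C_{\mathbf{a'}, n-k-1, k-1}$ admits a lift $V_\bullet \in Z_n^i$, and Proposition \ref{CKProp} together with the fact that $\Phi$ intertwines the two antipodal involutions shows that the $Z_n^i$ condition $C(L_i) = -C(L_{i+1})$ is exactly the arc $(i, i+1)$ condition defining $C_{\mathbf{a}, n-k, k}$, so every lift lands in $C_{\mathbf{a}, n-k, k}$.
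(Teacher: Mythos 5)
Your overall plan matches the paper's (check that under $\widetilde\Phi\circ\ell$ the map $q$ becomes the coordinate projection $q'$, then read off the image), but the core computational step has a genuine gap, and the ``technical obstacle'' you flag is resolved in a way that does not actually work.

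First, the claim that $C:\mathbb{P}^{2N-1}\to\mathbb{P}^1$ is $X$-equivariant is false in general. If $v=\sum a_ie_i+\sum b_if_i$ then $C(\langle v\rangle)=\langle(\sum_{i\geq1}a_i)e+(\sum_{i\geq1}b_i)f\rangle$ whereas $C(\langle Xv\rangle)=\langle(\sum_{i\geq2}a_i)e+(\sum_{i\geq2}b_i)f\rangle$; these lines coincide only when $a_1B=b_1A$ (with $A=\sum_{i\geq2}a_i$, $B=\sum_{i\geq2}b_i$), which generically fails. What is true, and what the paper uses, is that $C$ commutes with $X$ on vectors in $(\ker X)^{\perp}$, i.e.\ with $a_1=b_1=0$. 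The reason this applies to $L_j$ for $j\geq i+2$ is the observation you never make: for $V_\bullet\in Z_n^i$ one has $\ker X\subseteq V_{i+1}\subseteq V_{j-1}$, and since $L_j\perp V_{j-1}$ it follows that $L_j\subseteq(\ker X)^{\perp}$. Without that fact your identity $C(\langle Xv\rangle)=C(\langle v\rangle)$ is simply unsupported.

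Second, the same fact dissolves your ``main obstacle'' in a way your proposed resolution does not. Because $L_j\subseteq(\ker X)^{\perp}$ and $X$ restricted to $(\ker X)^{\perp}$ is an isometry onto its image, one gets $XL_j\perp XV_{j-1}=V'_{j-2}$ directly: for $u\in V_{j-1}$ write $u=u_0+u_1$ with $u_0\in\ker X$, $u_1\in(\ker X)^{\perp}$; then $\langle Xv,Xu\rangle=\langle v,u_1\rangle=\langle v,u\rangle-\langle v,u_0\rangle=0$. So $\langle Xv\rangle$ already \emph{is} the orthogonal line $L'_{j-2}$ and no re-orthogonalization is needed. Your alternative argument --- that replacing $Xv$ by $Xv+w$ with $w\in V'_{j-1}\subseteq\operatorname{Im}X$ leaves $C$ unchanged because ``coefficient sums cancel'' --- is not correct: $C(\langle Xv+Xu\rangle)=C(\langle X(v+u)\rangle)$ generally differs from $C(\langle Xv\rangle)$, and being in $\operatorname{Im}X$ places no constraint on the coefficient sums of $w$. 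The value of $C$ on a line complementary to $V'_{j-1}$ in $V'_j$ does \emph{not} depend only on the class in $V'_j/V'_{j-1}$. The fix is to establish $L_j\subseteq(\ker X)^{\perp}$ up front, as the paper does, after which both the equivariance of $C$ and the identification $L'_{j-2}=XL_j$ are immediate and the commutativity (and with it the image statement, which you handle correctly) follows.
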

\begin{proof}
We provide a short proof here. For more details see \cite[Lemma 5.2]{RT} which contains an identical argument.  

Since coordinates $x_i$ and $x_{i+1}$ in $S'_{{\bf a},n-k,k}$ are antipodes, Proposition \ref{CKProp} allows us to conclude $C_{{\bf a},n-k,k}\subseteq Z_n^i$. Let $V_{\bullet}\in Z_n^i$. By the definition of $Z_n^i$  we know that $X^{-1}V_{i-1} = V_{i+1}$ and thus $\textup{ker }X\subseteq V_{i+1}$. It follows that $L_j$ is spanned by $\sum_{i\geq2} a_ie_i+b_if_i$  and $XL_j\perp XV_{j-1}$ for all $j\geq i+2.$ This shows that if $V_{\bullet}\in C_{{\bf a},n-k,k}$  then $ (\widetilde{\Phi}\circ \ell) (q(V_{\bullet})) = q'((\widetilde{\Phi}\circ \ell)(V_{\bullet}))$. By commutativity of the diagram, we conclude that $q(C_{{\bf a},n-k,k})$ is $C_{{\bf a'}, n-k-1, k-1}.$
\end{proof}

\begin{lemma}\label{nk}
 The union $\bigcup_{{\bf a}\in B^{n-k,k}} C_{{\bf a},n-k,k}$ is  equal to $Y_{n-k,k}$. The $C_{{\bf a},n-k,k}$ are the irreducible components of $Y_{n-k,k}$.
\end{lemma}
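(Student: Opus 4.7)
The plan is to proceed by induction on $k$, leveraging Lemma \ref{erase} to realize each $C_{\mathbf{a}, n-k, k}$ as a $\mathbb{P}^1$-bundle over the smaller component $C_{\mathbf{a}', n-k-1, k-1}$. For the base case $k = 0$, the unique matching is all rays, so $S'_{\mathbf{a}, n, 0} = \{(p,\ldots,p)\}$ is a point. Unwinding $\widetilde{\Phi}\circ \ell$ together with the constraints $XV_i \subseteq V_{i-1}$ and $L_i \perp V_{i-1}$ forces $L_i = \langle e_i\rangle$ and hence $V_i = \langle e_1,\ldots,e_i\rangle$; this is the unique element of $Y_{n,0}$, which is itself a single point by the Jordan structure of $X$ on $V_{n,0}$.

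For the inductive step, fix $\mathbf{a} \in B^{n-k, k}$ with $k \geq 1$ and pick an innermost arc $(i, i+1) \in \mathbf{a}$; let $\mathbf{a}'$ be the erased matching. Lemma \ref{erase} gives $C_{\mathbf{a}, n-k, k} \subseteq Z_n^i$ and $q(C_{\mathbf{a}, n-k, k}) = C_{\mathbf{a}', n-k-1, k-1}$. Tracking the defining constraints under $\widetilde{\Phi}\circ \ell$, I would check that $C_{\mathbf{a}, n-k, k}$ is actually the \emph{full} preimage $q^{-1}(C_{\mathbf{a}', n-k-1, k-1}) \cap Z_n^i$: the condition $x_i = -x_{i+1}$ is automatic on $Z_n^i$ by Proposition \ref{CKProp}, and the remaining constraints on the other coordinates exactly cut out $S'_{\mathbf{a}', n-k-1, k-1}$ after forgetting $x_i, x_{i+1}$. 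Since $q|_{Z_n^i}$ is a $\mathbb{P}^1$-bundle and $C_{\mathbf{a}', n-k-1, k-1}$ is irreducible of dimension $k - 1$ by induction, $C_{\mathbf{a}, n-k, k}$ is irreducible of dimension $k$. Containment in $Y_{n-k, k}$ is then forced: for $V_\bullet \in C_{\mathbf{a}, n-k, k}$, the top term of $q(V_\bullet)$ is $XV_n = V_{n-k-1, k-1}$, and since $\dim X^{-1} V_{n-k-1, k-1} = n = \dim V_n$, we must have $V_n = X^{-1} V_{n-k-1, k-1} = V_{n-k, k}$.

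To conclude, I would invoke the classical fact (Spaltenstein) that $Y_{n-k, k} \cong \mathcal{S}_{n-k, k}$ is equidimensional of complex dimension $k$ with exactly $|B^{n-k, k}|$ irreducible components (indexed by noncrossing matchings). We have produced $|B^{n-k, k}|$ irreducible $k$-dimensional subvarieties of $Y_{n-k, k}$, and they are pairwise distinct because a generic point of $S'_{\mathbf{a}, n-k, k}$ satisfies $x_j = p$ exactly at the rays of $\mathbf{a}$ and $x_j = -x_{j'}$ exactly at its arcs, so $\mathbf{a}$ is recoverable from the generic point. Hence the $C_{\mathbf{a}, n-k, k}$ are precisely the irreducible components of $Y_{n-k, k}$ and their union is $Y_{n-k, k}$. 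The main obstacle I foresee is the step asserting that $C_{\mathbf{a}, n-k, k}$ coincides with the full $\mathbb{P}^1$-bundle preimage inside $Z_n^i$ (rather than a proper sub-bundle); this requires careful bookkeeping through the $\widetilde{\Phi}\circ\ell$ identification, after which irreducibility, the dimension count, and the covering statement follow from induction plus the Spaltenstein/Vargas facts.
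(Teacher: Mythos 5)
Your proposal is correct and takes essentially the same route as the paper: induction via Lemma \ref{erase}, containment of each $C_{\mathbf{a},n-k,k}$ in $Y_{n-k,k}$ by a dimension argument, and the classical indexing of irreducible components by standard Young tableaux (equivalently, noncrossing matchings) to conclude that the union exhausts $Y_{n-k,k}$. The one step you flag as a possible obstacle --- that $C_{\mathbf{a},n-k,k}$ is the full $\mathbb{P}^1$-bundle preimage inside $Z_n^i$ --- is in fact immediate from the definition of $S'_{\mathbf{a},n-k,k}$ (the fiber over a point of $S'_{\mathbf{a}',n-k-1,k-1}$ is exactly the sphere of pairs $(x_i,-x_i)$), though the paper sidesteps this entirely by deducing irreducibility and the dimension count directly from the diffeomorphism $\widetilde{\Phi}\circ\ell$ onto $S'_{\mathbf{a},n-k,k}\cong(S^2)^k$; your observation that $XV_n=V_{n-k-1,k-1}$ forces $V_n=V_{n-k,k}$ is a slightly cleaner derivation of the containment than the paper's coordinate-by-coordinate check.
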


\begin{proof}
Consider the partition $(n,0)$. The unique noncrossing matching ${\bf a}\in B^{n,0}$ has $n$ rays and no arcs. Then $X'_{n,0} = S_{{\bf a},n,0}' = \{ (p,\ldots, p)\in (S^2)^n \}$ and $$\widetilde{\Phi}^{-1}(S_{{\bf a},n,0}') = \{(\langle e\rangle, \ldots , \langle e\rangle)\in (\mathbb{P}^1)^n\}.$$ We want to find $V_{\bullet}\in Y_n$ such that $\ell(V_{\bullet}) =  (\langle e\rangle, \ldots , \langle e\rangle)$. 

Since $C(L_1) = \langle e \rangle$ we must have $L_1 = V_1 = \langle e_1 \rangle$. Inductively assume that $V_{i-1} = \langle e_1, \ldots , e_{i-1} \rangle.$ We must have $L_i\perp V_{i-1}$ and $XL_i\subset V_{i-1}$ so $L_i = \langle x_{i}e_i + y_1f_1 \rangle$. Since $C(L_i) = \langle e \rangle$ this forces $y_1 = 0$. This shows
\begin{eqnarray*}
(\ell^{-1}\circ \widetilde{\Phi}^{-1})(S'_{{\bf a},n,0}) &=& C_{{\bf a},n,0}\\
 &=& \{ (<e_1>, <e_1, e_2>, \ldots , <e_1, \ldots, e_n>) \} \\
&=& \{ (V_{1,0}, \ldots, V_{n,0})\}\\
&=& \{ V_{\bullet} \in Y_{n}: XV_i\subset V_i \textup{ and } V_n = V_{n,0} \}\\
&=& Y_{n,0}.
\end{eqnarray*}
Thus we have proven $C_{{\bf a},n,0}$ is diffeomorphic to $Y_{n,0}$ for all $n$. There is a unique noncrossing matching ${\bf a}\in B^{1,1}$ namely the one with arc $(1,2)$. From \cite[Lemma 5.3]{RT} $C_{{\bf a},1,1}$ is diffeomorphic to $Y_{1,1}$. 

We proceed by induction assuming the statement is true for $(n-k-1, k-1)$ in order to prove it for $(n-k,k)$. 
Assume that the claim holds for $(n-k-1, k-1)$ where $k\geq 1$. Let ${\bf a}\in B^{n-k,k}$ be a matching of type $(n-k,k)$. Because ${\bf a}$ is noncrossing, it necessarily has an arc of the form $(i,i+1)$. Let ${\bf a'}\in B^{n-k-1,k-1}$ be the matching of type $(n-k-1, k-1)$ obtained by erasing the arc $(i,i+1)$ in ${\bf a}$.  Then we have the $\mathbb{P}^1$ bundle  $C_{{\bf a},n, k} \stackrel{q}{\rightarrow} C_{{\bf a'},n-k-1, k-1}$.  

If $q(V_{\bullet}) = V_{\bullet}'$ then for each $j$ there exists some $m$ with $V_j \subseteq X^{-1} V_m'$. Since $V'_{\bullet} = (V'_1, \ldots, V'_{n-2}) = (V_1, \ldots, V_{i-1}, XV_{i+2}, \ldots, XV_{n})$ we may choose $m = j$ for $j\leq i$, $m = i$ for $i< j< i+3$, and $m = i-2$ for $j>i+2$.  

 Since $V'_{\bullet} \in C_{{\bf a'}, n-k-1, k-1}$ each $V_m' \subseteq V_{n-k-1,k-1}$ and so each $V_j \subseteq X^{-1}V_{n-k-1,k-1} = V_{n-k,k}$.  Thus for all noncrossing matchings ${\bf a}\in B^{n-k,k}$, each $C_{{\bf a},n-k,k}$ is contained in $Y_{n-k,k}$.  Since $\widetilde{\Phi}^{-1} \circ \ell$ is a diffeomorphism all $C_{{\bf a},n-k,k}$ are compact irreducible subvarieties of $Y_{n-k,k}$ with the same dimension as $X'_{n-k,k}$. Thus each $C_{{\bf a},n-k,k}$ is an irreducible component of the Springer variety $Y_{n-k,k}$.  Since $S'_{{\bf a},n-k,k} \neq S'_{{\bf b},n-k,k}$ for ${\bf a} \neq {\bf b}$ and $\widetilde{\Phi} \circ \ell$ is a diffeomorphism, it follows that $C_{{\bf a},n-k,k} \neq C_{{\bf b},n-k,k}$ for ${\bf a}\neq {\bf b}$.  

Recall that the irreducible components of  $\mathcal{S}_{n-k,k}$ are in bijection with standard Young tableaux of shape $(n-k,k)$. \cite[Proposition 1.3]{SW} shows the set of noncrossing matchings of type $(n-k,k)$ are in bijection with standard Young tableaux of shape $(n-k,k)$. These results together show that the irreducible components of $\mathcal{S}_{n-k,k}$ are indexed by noncrossing matchings of type $(n-k,k)$. Thus $\bigcup_{{\bf a} \in B^{n-k,k}} C_{{\bf a},n-k,k} \subseteq Y_{n-k,k}$ is isomorphic to the $(n, n-k)$ Springer variety, and we conclude $\bigcup_{{\bf a} \in B^{n-k,k}} C_{{\bf a},n-k,k} = Y_{n-k,k}$.
\end{proof}

\begin{theorem}\label{homeopf}
The $(n,n-k)$ Springer variety $\mathcal{S}_{n-k,k}$ is diffeomorphic to $X_{n-k,k}$.
\end{theorem}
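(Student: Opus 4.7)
I would assemble the required diffeomorphism as the composition
\[
\mathcal{S}_{n-k,k} \;\cong\; Y_{n-k,k} \;\stackrel{\widetilde{\Phi}\circ \ell}{\longrightarrow}\; X'_{n-k,k} \;\stackrel{\tau}{\longrightarrow}\; X_{n-k,k},
\]
where $\tau$ is a simple sign-twisting self-diffeomorphism of $(S^2)^n$ that converts the antipodal identifications defining $X'_{n-k,k}$ into the equal identifications defining $X_{n-k,k}$.

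The first arrow comes essentially for free: Proposition \ref{CKProp} says $\widetilde{\Phi}\circ \ell$ is a diffeomorphism $(\mathbb{P}^1)^n \to (S^2)^n$, and $C_{{\bf a},n-k,k}$ is defined as $(\widetilde{\Phi}\circ\ell)^{-1}(S'_{{\bf a},n-k,k})$. Taking the union over ${\bf a}\in B^{n-k,k}$ and invoking Lemma \ref{nk} yields a diffeomorphism $Y_{n-k,k}\to X'_{n-k,k}$. Combined with the identification $\mathcal{S}_{n-k,k}\cong Y_{n-k,k}$ recorded earlier, the problem reduces to producing a diffeomorphism $X'_{n-k,k}\to X_{n-k,k}$.

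For the second arrow, define $\tau\colon (S^2)^n\to (S^2)^n$ by $\tau(x_1,\ldots,x_n) = ((-1)^1 x_1, \ldots, (-1)^n x_n)$, where $-x$ denotes the antipodal map. This $\tau$ is manifestly a diffeomorphism. I claim $\tau(S'_{{\bf a},n-k,k}) = S_{{\bf a},n-k,k}$ for every ${\bf a}\in B^{n-k,k}$. The ray case is immediate: if $(i)\in {\bf a}$, then $x_i=p$ in $S'_{{\bf a},n-k,k}$ transforms into $y_i=(-1)^i p$, which matches the defining condition for $S_{{\bf a},n-k,k}$. The arc case rests on a combinatorial observation: if $(i,j)\in {\bf a}$ with $i<j$, then $i$ and $j$ have opposite parity. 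Indeed, because rays are infinitely high, no ray can sit at a vertex strictly between $i$ and $j$, and no arc can have exactly one endpoint strictly between them; so the vertices $i+1,\ldots,j-1$ are matched in pairs among themselves, forcing $j-i-1$ to be even. Consequently $(-1)^{i+j}=-1$, and the relation $x_i=-x_j$ in $S'_{{\bf a},n-k,k}$ transforms under $\tau$ into $y_i=(-1)^i x_i=-(-1)^j x_j=y_j$, which is exactly the identification for arcs in $S_{{\bf a},n-k,k}$.

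Putting these together, the composition $\tau\circ \widetilde{\Phi}\circ \ell$ restricts to the desired diffeomorphism $Y_{n-k,k}\to X_{n-k,k}$. The only piece of real content is the parity observation about arc endpoints in noncrossing matchings with infinitely high rays, and this is the main (if modest) obstacle; everything else is bookkeeping built directly on Lemma \ref{nk} and Proposition \ref{CKProp}.
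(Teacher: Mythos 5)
Your proposal is correct and takes essentially the same approach as the paper: the map you call $\tau$ is exactly the paper's $\gamma$, and the heart of the argument in both cases is the parity observation that noncrossing arcs have endpoints of opposite parity while rays sit at vertices of either parity, so the alternating-antipodal map carries $X'_{n-k,k}$ to $X_{n-k,k}$.
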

\begin{proof}
Lemmas \ref{erase} and \ref{nk} imply that $\mathcal{S}_{n-k,k}$ is diffeomorphic to $X_{n-k,k}'$. Now we use an antipodal map to show that $X_{n-k,k}'$ and $X_{n-k,k}$ are diffeomorphic.

Define $\gamma: (S^2)^n \rightarrow (S^2)^n$ as $\gamma((x_1, \ldots , x_n)) = (-x_1, x_2, \ldots, (-1)^n x_n)$. This map is its own inverse and thus is a diffeomorphism. Since ${\bf a}\in B^{n-k,k}$ is noncrossing each arc in ${\bf a}$ has some number of arcs and no rays between its endpoints. This means that there are an even number of vertices between the endpoints of each arc and every arc $(i,j)\in {\bf a}$ has one even and one odd endpoint. Therefore $\gamma$ is the identity on exactly one of the coordinates $x_i$, $x_j$ and the antipodal map on the other. Furthermore given a ray $(i)\in {\bf a}$ the map $\gamma$ is the identity on $x_i$ if $i$ is even and the antipodal map if $i$ is odd. Hence $\gamma(X_{n-k,k}) = X_{n-k,k}'$. 
\end{proof}

As a concrete example we use our new construction to get a topological picture of the $(3,1)$ Springer variety. The $(3,1)$ Springer variety $X_{3,1}$ has three components indexed by the three elements of $B^{3,1}$ shown in Figure \ref{31match}.
\begin{figure}[h]
${\bf a} =$ \begin{picture}(60,20)(-10,0)
\qbezier(16, 0)(16,10)(8,10)
\qbezier(8,10)(0,10)(0,0)
\put(24,0){\line(0,1){20}}
\put(32,0){\line(0,1){20}}
\end{picture}
${\bf b} = $\begin{picture}(60,20)(-10,0)
\qbezier(24, 0)(24,10)(16,10)
\qbezier(16,10)(8,10)(8,0)
\put(0,0){\line(0,1){20}}
\put(32,0){\line(0,1){20}}
\end{picture}
${\bf c}  = $\begin{picture}(60,20)(-10,0)
\qbezier(16, 0)(16,10)(24,10)
\qbezier(24,10)(32,10)(32,0)
\put(0,0){\line(0,1){20}}
\put(8,0){\line(0,1){20}}
\end{picture}
\caption{Elements of $B^{3,1}$}\label{31match}
\end{figure}
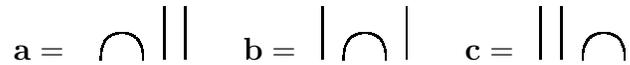

\noindent Each component of $X_{3,1}$ is homeomorphic to $S^2$.
\begin{itemize}
\item{$ S_{\bf a} = \{ (x,x,-p,p): x\in S^2\}$}
\item{$S_{\bf b} = \{ (-p,x,x,p): x\in S^2\}$}
\item{$S_{\bf c} = \{ (-p,p,x,x): x\in S^2\}$}
\end{itemize}
The components intersect in two different points.
\begin{itemize}
\item{$ S_{\bf a}\cap S_{\bf b} = \{ (-p,-p,-p,p)\} \in (S^2)^4\}$}
\item{$S_{\bf b}\cap S_{\bf c} = \{ (-p,p,p,p)\} \in (S^2)^4\}$}
\item{$S_{\bf a}\cap S_{\bf c} = \emptyset$}
\end{itemize}
Putting this information together we see that $X_{3,1}$ is the wedge of 3 spheres as shown in Figure \ref{31wedge}.

\begin{figure}[h]
\includegraphics[width=2in]{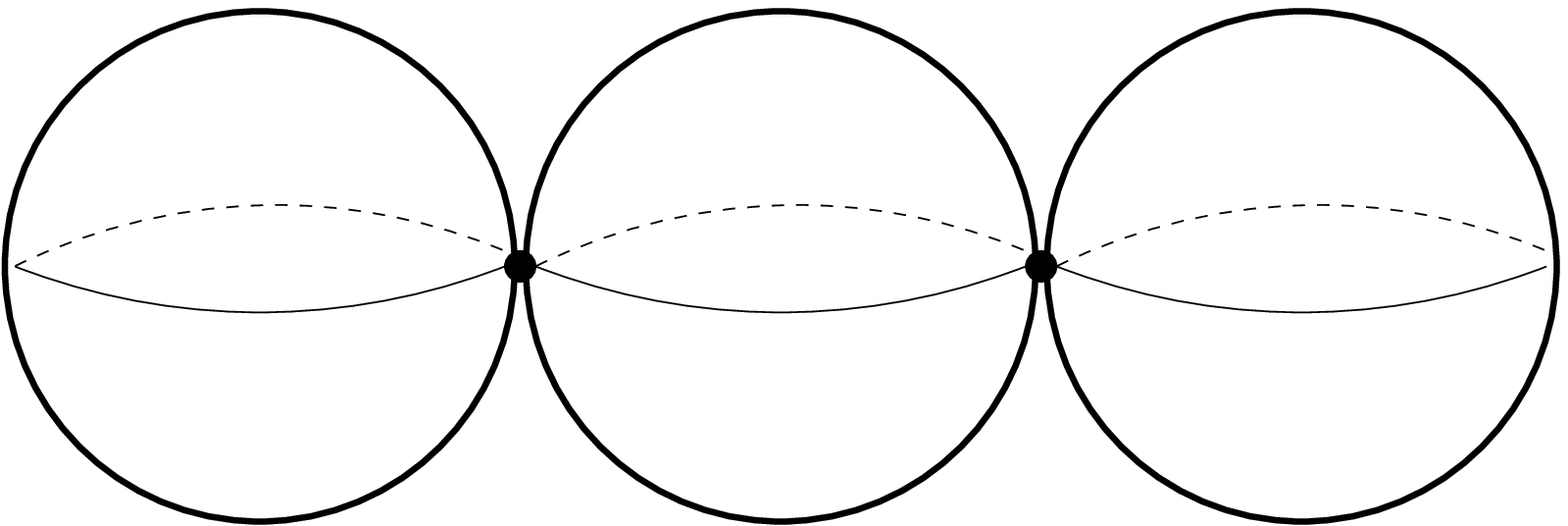}

\begin{picture}(100,10)(-50,0)
\put(-5,0){$S_{\bf b}$}
\put(-55,0){$S_{\bf a}$}
\put(45,0){$S_{\bf c}$}
\end{picture}
\caption{$X^{3,1}$ is a wedge of three copies of $S^2$.} \label{31wedge}
\end{figure}

\section{Intersections of components}\label{int}
From now on we regard $X_{n-k,k}$ as the $(n-k,k)$ Springer variety.  For fixed $n$ and $k$ we refer to $S_{{\bf a}, n-k,k}$ simply as $S_{\bf a}$. Following work of Khovanov \cite[Section 3]{K}  we prove a sequence of lemmas about intersection between components and use this information to set up a Mayer-Vietoris type exact sequence on homology. Propositions \ref{CircleDist} and \ref{Khovarrows} are stated without proof in \cite{K}. Because we need these results for our arguments, we include proofs in the Appendix to this paper. 

\subsection{Matchings and their associated components}

\begin{definition}
Given ${\bf a} ,{\bf b} \in B^{n-k,k}$ let ${\bf a}w({\bf b})$ be the result of reflecting ${\bf b}$ horizontally and gluing this reflection to ${\bf a}$. The one manifold ${\bf a}w({\bf b})$ will consist of circles, lines with both endpoints pointing up, lines with both endpoints pointing down, and lines with one endpoint pointing in each direction. See Figure \ref{revmatches} for an example. We define $|{\bf a}w({\bf b})|$ to be the number of connected components in ${\bf a}w({\bf b})$.  
\end{definition}

\begin{figure}[h]
\scalebox{.8}{\begin{picture}(200,100)(50,-80)
\put(-30,5){$a=$}
\put(0,0){\line(0,1){20}}
\qbezier(20, 0)(20,15)(30,15)
\qbezier(30,15)(40,15)(40,0)
\qbezier(60,0)(60,15)(70,15)
\qbezier(70,15)(80,15)(80,0)
\put(200,5){$b=$}
\put(310,0){\line(0,1){20}}
\qbezier(230, 0)(230,15)(240,15)
\qbezier(240,15)(250,15)(250,0)
\qbezier(270,0)(270,15)(280,15)
\qbezier(280,15)(290,15)(290,0)
\put(70,-45){$aw(b) =$}
\put(130,-50){\line(0,1){20}}
\put(210,-70){\line(0,1){20}}
\qbezier(150, -50)(150,-35)(160,-35)
\qbezier(160,-35)(170,-35)(170,-50)
\qbezier(190,-50)(190,-35)(200,-35)
\qbezier(200,-35)(210,-35)(210,-50)
\qbezier(130,-50)(130,-65)(140,-65)
\qbezier(140,-65)(150,-65)(150,-50)
\qbezier(170,-50)(170,-65)(180,-65)
\qbezier(180,-65)(190,-65)(190,-50)
\end{picture}}
\caption{Obtaining ${\bf a}w({\bf b})$ from ${\bf a}$ and ${\bf b}$. }\label{revmatches}
\end{figure}

\begin{definition}[Order on Noncrossing Matchings]
Given ${\bf a},{\bf b}\in B^{n-k,k}$ write ${\bf a}\rightarrow {\bf b}$ if one of the following is true
\begin{itemize}
\item{There is a quadruple $i<j<k<l$ where 
\begin{itemize}
\item[$\bullet$]{${\bf a}$ and ${\bf b}$ are identical off if $i,j,k,l$,}
\item[$\bullet$]{$(i,j), (k,l)\in {\bf a}$ and}
\item[$\bullet$]{$(i,l), (j,k)\in {\bf b}$.}
\end{itemize}}
\item{There is a triple $i<j<k$ where 
\begin{itemize}
\item[$\bullet$]{${\bf a}$ and ${\bf b}$ are identical off of $i,j,k$,}
\item[$\bullet$]{$(i), (j,k)\in {\bf a}$ and}
\item[$\bullet$]{ $(i,j), (k)\in {\bf b}$.}
\end{itemize}}
 \end{itemize}

Define a partial order ${\bf a}\prec {\bf b}$ if there exist a chain of arrows ${\bf a}\rightarrow \cdots \rightarrow {\bf b}$. Extend this to a total order $<$. Note that the extension to a total order is not unique. Figure \ref{order} has an example.
\end{definition}
 
 \begin{figure}[h]
 \includegraphics[width=5in]{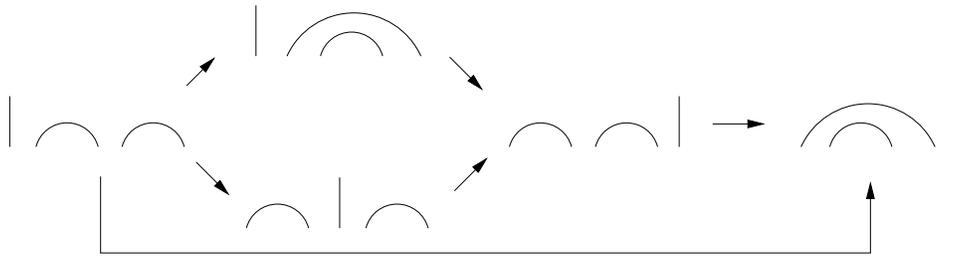}
 \caption{The $\rightarrow$ relation on elements of $B^{3,2}$.}\label{order}
 \end{figure}

\begin{definition}
Given ${\bf a},{\bf b}\in B^{n-k,k}$ let the distance from ${\bf a}$ to ${\bf b}$, denoted $d({\bf a},{\bf b})$,  be the minimal length $m$ of a sequence $({\bf a}={\bf a_0}, {\bf a_1}, \ldots, {\bf a_{m-1}}, {\bf a_m}={\bf b})$ such that ${\bf a_i} \rightarrow {\bf a_{i+1}}$ or ${\bf a_{i}} \leftarrow {\bf a_{i+1}}$ for all $i$. We call a sequence that realizes the distance between ${\bf a}$ and ${\bf b}$ a minimal sequence.
\end{definition} 

\begin{definition}
Given ${\bf a}\in B^{n-k,k}$ define the subset  $B_{\bf a}^{n-k,k}$ of $B^{n-k,k}$ as $$B_{\bf a}^{n-k,k} = \{ {\bf b}\in B^{n-k,k} : \textup{each line in } {\bf a}w({\bf b}) \textup{ has endpoints pointing in opposite directions} \}.$$
\end{definition}

The following observation of Fung proven in \cite[Theorem 7.3]{F} shows that the line segments in the one manifold ${\bf a}w({\bf b})$  tell us when two components have empty intersection.
\begin{proposition}[{\bf Fung}]
Given ${\bf a},{\bf b}\in B^{n-k,k}$ if ${\bf a}w({\bf b})$ has a line with both endpoints pointing in the same direction then $S_{\bf a}\cap S_{\bf b} = \emptyset$.
\end{proposition}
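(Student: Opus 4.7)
The plan is to derive a direct contradiction by chasing the equations defining $S_{\bf a}\cap S_{\bf b}$ along a line in ${\bf a}w({\bf b})$ with like-pointing endpoints.

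First I would suppose $(x_1,\dots,x_n) \in S_{\bf a}\cap S_{\bf b}$ and observe that at every vertex $i\in\{1,\dots,n\}$ exactly one of the following four situations occurs: $i$ is in an arc of ${\bf a}$ (imposing $x_i = x_j$ for the partner $j$), $i$ is a ray of ${\bf a}$ (imposing $x_i = (-1)^i p$), and analogously for ${\bf b}$. Thus each connected component of the 1-manifold ${\bf a}w({\bf b})$ records a chain of such equalities involving the coordinates at its vertices.

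Next I would exploit the key parity fact recalled in the proof of Theorem~\ref{homeopf}: in a noncrossing matching the vertices lying strictly between the two endpoints of an arc are themselves paired into arcs (no rays can lie between, since rays are ``infinitely high''), so an arc $(i,j)$ always connects one even and one odd vertex. Consequently, along any path $v=v_0,v_1,\dots,v_k=w$ in ${\bf a}w({\bf b})$ the parities of the $v_i$ alternate, so $v$ and $w$ have the same parity if and only if $k$ is even.

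Now I would examine a line $L$ whose endpoints $v,w$ both point up, i.e.\ both are rays of ${\bf a}$. At $v$ the ${\bf a}$-datum is a ray, so the first edge of $L$ must come from a ${\bf b}$-arc; by alternation around each internal vertex (which is incident to one ${\bf a}$-arc and one ${\bf b}$-arc), the edges of $L$ alternate ${\bf b},{\bf a},{\bf b},{\bf a},\dots$, and the last edge, incident to the ${\bf a}$-ray at $w$, must again be a ${\bf b}$-arc. Hence the number of edges $k$ is odd, so $v$ and $w$ have opposite parity. But all edge-equalities along $L$ combine to force $x_v = x_w$, while the ray conditions at $v$ and $w$ force $x_v = (-1)^v p$ and $x_w = (-1)^w p$, which are antipodal when $v$ and $w$ have opposite parity. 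The case in which both endpoints point down (both rays of ${\bf b}$) is identical with the roles of ${\bf a}$ and ${\bf b}$ swapped. This contradiction gives $S_{\bf a}\cap S_{\bf b} = \emptyset$.

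The only subtle step is the alternation argument producing odd $k$; I expect the main care is in verifying that at every internal vertex of $L$ both the ${\bf a}$-datum and the ${\bf b}$-datum are arcs (so the edges really do alternate), which is immediate from the fact that a vertex has an incident ray on at most one side and interior vertices on $L$ have neither side a ray.
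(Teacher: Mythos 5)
Your proof is correct, and it is worth noting that the paper itself does \emph{not} prove this proposition: it is attributed to Fung and cited to \cite[Theorem~7.3]{F}, where the argument is carried out in a more algebro-geometric framework for components of Springer fibers. What you give instead is a short, self-contained, purely combinatorial proof inside Khovanov's/Russell's topological model, resting on the two facts already in play in the paper: (i) every arc of a noncrossing matching joins one even and one odd vertex (this is recorded in the proof of Theorem~\ref{homeopf}), and (ii) along a line component of ${\bf a}w({\bf b})$ whose two free ends are, say, both ${\bf a}$-rays, the edges alternate ${\bf b},{\bf a},{\bf b},\dots,{\bf b}$, so the edge count is odd and the endpoints have opposite parity; combined with $x_v=x_w$ from the arc equalities and $x_v=(-1)^v p$, $x_w=(-1)^w p$ from the ray conditions, this is a contradiction. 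The symmetric case (both ends ${\bf b}$-rays) is identical. This is a genuinely more elementary route than citing Fung and fits the spirit of the paper's explicit topological computations.

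One small inaccuracy in your closing remark: it is \emph{not} true in general that a vertex of ${\bf a}w({\bf b})$ has an incident ray on at most one side --- a vertex can carry a ray of ${\bf a}$ above and a ray of ${\bf b}$ below (this happens, for instance, for every vertex when $k=0$), producing a length-zero line with one end pointing each way. That false side claim is harmless, though: what your alternation argument actually uses, and what you also correctly state, is that an \emph{interior} vertex of $L$ cannot have a ray on either side (a ray would terminate $L$ there). That is all that is needed, so the proof stands.
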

We will avoid these ``bad pairs" of matchings because they are unnecessary for the purpose of our arguments as illustrated by the following corollary. 
\begin{corollary}
Given ${\bf a}\in B^{n-k,k}$  $$\left( \bigcup_{\stackrel{{\bf b}<{\bf a}}{{\bf b}\in B^{n-k,k}}} S_{\bf b} \right) \bigcap S_{\bf a} = \left( \bigcup_{\stackrel{{\bf b} <{\bf a};}{ {\bf b}\in B_{\bf a}^{n-k,k}}} S_{\bf b} \right) \bigcap S_{\bf a}$$.
\end{corollary}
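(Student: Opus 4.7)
The plan is to prove this by a straightforward set-theoretic argument using Fung's proposition. The containment $\supseteq$ is immediate because $B_{\bf a}^{n-k,k}\subseteq B^{n-k,k}$, so the union on the right is a subunion of the union on the left.

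For the containment $\subseteq$, I would split the left-hand union into two pieces indexed by ${\bf b}\in B_{\bf a}^{n-k,k}$ and ${\bf b}\notin B_{\bf a}^{n-k,k}$ (both with ${\bf b}<{\bf a}$). The first piece is exactly the right-hand side, so it suffices to show the second piece contributes nothing after intersecting with $S_{\bf a}$. For any ${\bf b}<{\bf a}$ with ${\bf b}\notin B_{\bf a}^{n-k,k}$, the one-manifold ${\bf a}w({\bf b})$ by definition contains at least one line segment whose two endpoints point in the same direction (both up or both down). Fung's proposition then immediately gives $S_{\bf a}\cap S_{\bf b}=\emptyset$. Taking the union over all such ${\bf b}$ preserves this, so that entire piece of the union meets $S_{\bf a}$ trivially. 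The two containments together give equality.

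There is essentially no obstacle here: this is a clean bookkeeping consequence of Fung's result. The only minor thing to notice is that the splitting of $\{{\bf b}\in B^{n-k,k}:{\bf b}<{\bf a}\}$ into $B_{\bf a}^{n-k,k}$ and its complement is disjoint, and that intersection distributes over union, so one can legitimately argue componentwise. In particular no argument about the order relation $<$ or the distance function $d(\cdot,\cdot)$ is needed; the statement is purely about which $S_{\bf b}$'s can possibly meet $S_{\bf a}$ at all.
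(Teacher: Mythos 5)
Your proof is correct and is exactly the argument the paper intends: the corollary is stated immediately after Fung's proposition precisely because the complement of $B_{\bf a}^{n-k,k}$ consists, by definition, of those ${\bf b}$ for which ${\bf a}w({\bf b})$ has a line with both endpoints pointing the same way, and Fung's result then kills $S_{\bf a}\cap S_{\bf b}$ for each such ${\bf b}$. The paper leaves this unproved as an immediate consequence, and your set-theoretic bookkeeping is the right level of detail.
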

\noindent In the case that ${\bf b}\in B_{\bf a}^{n-k,k}$ the intersection $S_{\bf a} \cap S_{\bf b}$ is homeomorphic to $(S^2)^{\#\textup{ of circles in }{\bf a}w({\bf b})}$.

\begin{lemma}\label{raypair}
Let ${\bf a}\in B^{n-k,k}$ and ${\bf b}\in B_{\bf a}^{n-k,k}$. If we enumerate the rays in ${\bf a}$ and ${\bf b}$ from left to right as $i_i, \ldots, i_{n-2k}$ and $j_i, \ldots, j_{2n-k}$ respectively then in ${\bf a}w({\bf b})$ each line segment will contain rays $i_t$ and $j_t$ for some $1\leq t \leq n-2k$.
\end{lemma}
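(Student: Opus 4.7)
The plan is to show that the line segments of ${\bf a}w({\bf b})$ are pairwise disjoint embedded arcs connecting the (ordered) up-rays of ${\bf a}$ to the (ordered) down-rays of ${\bf b}$, and then invoke a standard noncrossing-in-the-plane argument to conclude that the induced bijection between ray indices is order-preserving.

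First I would verify that distinct line segments in ${\bf a}w({\bf b})$ are disjoint as subsets of the plane. Arcs of ${\bf a}$ lie in the upper half-plane while the reflected arcs of ${\bf b}$ lie in the lower half-plane, so arcs from different matchings cannot meet; within each matching, arcs do not cross by hypothesis; the upward rays of ${\bf a}$ are ``infinitely high'' and so miss every arc on either side as well as the downward rays of ${\bf b}$, and symmetrically for rays of ${\bf b}$. Thus any two distinct components of ${\bf a}w({\bf b})$ are disjoint. Next, the hypothesis ${\bf b}\in B_{\bf a}^{n-k,k}$ ensures that every line segment has one endpoint pointing up (a ray of ${\bf a}$) and one pointing down (a ray of ${\bf b}$). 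Since both matchings have exactly $n-2k$ rays (the ``$2n-k$'' in the statement appears to be a typographical slip), this determines a bijection $\sigma$ of $\{1,\dots,n-2k\}$ by the rule that the line segment through $i_t$ ends at $j_{\sigma(t)}$; the lemma asserts $\sigma=\mathrm{id}$.

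Finally I would embed the picture in a large rectangle whose top edge contains the points $i_1<\cdots<i_{n-2k}$ in order and whose bottom edge contains $j_1<\cdots<j_{n-2k}$ in order, extending each ray to its corresponding boundary point. The line segments then become $n-2k$ pairwise disjoint embedded arcs in the rectangle joining top boundary points to bottom boundary points, and a standard noncrossing-in-the-plane argument forces the induced matching to be order-preserving. The main obstacle is formalizing this last step. Rather than invoking the Jordan curve theorem directly, I would induct on $n-2k$: show that the line segment through $i_1$ must terminate at $j_1$ (otherwise, closing this arc using the left and bottom edges of the rectangle produces a Jordan curve that separates $j_1$ from the rest of its own line segment, forcing a crossing), then delete that segment together with its endpoints and apply the inductive hypothesis to the resulting configuration with $n-2k-1$ line segments.
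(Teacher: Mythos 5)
Your proof is correct and uses essentially the same idea as the paper's: each line segment separates the plane, and by the noncrossing hypothesis the rays on a given side must pair up among themselves, which forces the index count. The paper does this count directly on an arbitrary segment (the $t_1-1$ rays of ${\bf a}$ to its left must match the $t_2-1$ rays of ${\bf b}$ to its left, so $t_1=t_2$), whereas you organize the same separation fact as a Jordan-curve induction peeling off the leftmost segment; both routes are sound.
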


\begin{proof}
Each line segment in ${\bf a}w({\bf b})$ consists of two rays connected by some number of arcs. By assumption one ray is in ${\bf a}$ and one ray is in ${\bf b}$. Say that rays $i_{t_1}$ and $j_{t_2}$ are part of the same segment in ${\bf a}w({\bf b})$. Then the rays $i_i, \ldots, i_{t_1-1}, j_1, \ldots, j_{t_2-1}$ lie on one side of this line segment. Since ${\bf a}w({\bf b})$ is noncrossing these rays must be connected pairwise in a noncrossing manner. Since ${\bf b}\in B_{\bf a}^{n-k,k}$ each $i$-ray must be connected to a $j$-ray. Therefore $t_1 = t_2$.
\end{proof}

\begin{lemma}\label{break}
Let ${\bf a}\in B^{n-k,k}$ and ${\bf b}\in B_{\bf a}^{n-k,k}$, and consider a sequence $$({\bf a}={\bf a_0}, {\bf a_1}, \ldots, {\bf a_{m-1}},{\bf  a_m} ={\bf  b}).$$ If for all $i$, we have $|{\bf a_i}w({\bf b})| = |{\bf a_{i+1}}w({\bf b})| - 1$ then the sequence is minimal.
\end{lemma}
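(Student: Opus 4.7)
The plan is to lower-bound the length of any sequence from $\mathbf{a}$ to $\mathbf{b}$ by the integer $\Delta := |{\bf b}w({\bf b})| - |{\bf a}w({\bf b})|$ and then to check that the given sequence has length exactly $\Delta$.

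The heart of the argument is a Lipschitz-type bound: for any single arrow ${\bf x}\to {\bf y}$ (equivalently ${\bf x}\leftarrow {\bf y}$) one has $\bigl||{\bf x}w({\bf b})|-|{\bf y}w({\bf b})|\bigr|\le 1$. This is exactly the content of Proposition \ref{CircleDist}, the earlier result stated without proof in \cite{K} and recorded in the appendix. Topologically, each arrow corresponds to a local surgery on the $1$-manifold ${\bf x}w({\bf b})$: in the quadruple case $(i,j),(k,l)\leftrightarrow (i,l),(j,k)$ the change is a saddle on four strand-ends, and in the triple case $(i),(j,k)\leftrightarrow (i,j),(k)$ it is the analogous surgery in which one of the four ends runs off to infinity along a ray. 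Either way the number of connected components of the resulting $1$-manifold changes by exactly $\pm 1$.

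Granting this, the hypothesis of the lemma says each step in the given sequence raises the component count by exactly one, so telescoping produces
\[ m \;=\; \sum_{i=0}^{m-1}\bigl(|{\bf a_{i+1}}w({\bf b})| - |{\bf a_{i}}w({\bf b})|\bigr) \;=\; \Delta. \]
On the other hand, for any alternative sequence $({\bf a}={\bf c_0},{\bf c_1},\ldots,{\bf c_{m'}}={\bf b})$ of length $m'$, the Lipschitz bound telescopes to
\[ \Delta \;=\; \sum_{i=0}^{m'-1}\bigl(|{\bf c_{i+1}}w({\bf b})| - |{\bf c_{i}}w({\bf b})|\bigr) \;\le\; m'. \]
Combining the two displays yields $m=\Delta\le m'$ for every competing sequence, so $({\bf a_0},\ldots,{\bf a_m})$ realizes the minimal length.

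The only substantive step is the Lipschitz bound itself, which is deferred to the appendix as Proposition \ref{CircleDist}; the case analysis there (quadruple move versus the triple move involving a ray) is the genuine work, while the minimality statement above is then a one-line telescoping argument.
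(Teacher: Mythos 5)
Your argument is essentially the paper's: both hinge on the observation that each arrow move changes the number of connected components of the glued one-manifold by exactly $\pm 1$ (a local saddle/surgery), and then telescope to get $m = |{\bf b}w({\bf b})|-|{\bf a}w({\bf b})| \le m'$ for any competing sequence. The one slip is attributing that per-arrow $\pm 1$ observation to the \emph{statement} of Proposition \ref{CircleDist}; that proposition is the distance formula $d({\bf a},{\bf b}) = n/2 - |{\bf a}w({\bf b})|$ for the $(n/2,n/2)$ case, which is a downstream consequence of the Lipschitz fact rather than its content, and Lemma \ref{break} is logically independent of it. Since your direct surgery argument already establishes the $\pm 1$ change, the misattribution does not create a gap.
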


\begin{proof}
 For each $0\leq i \leq m-1$ either ${\bf a_i} \rightarrow {\bf a_{i+1}}$ or ${\bf a_i} \leftarrow {\bf a_{i+1}}$. Thus either an arc and a ray are changing position, two unnested arcs are replaced by two nested arcs, or two nested arcs are replaced by unnested arcs.  Consider the associated sequence $$({\bf a}w({\bf b}) = {\bf a_0}w({\bf b}), {\bf a_1}w({\bf b}), \ldots, {\bf a_{m-1}}w({\bf b}), {\bf a_{m}}w({\bf b}) = {\bf b}w({\bf b})).$$ 

 Analyzing each possibility, one can check that $|{\bf a_i}w({\bf b})| = |{\bf a_{i+1}}w({\bf b})| \pm 1$. Moreover, given ${\bf a_{i}}w({\bf b})$ and ${\bf a_{i+1}}w({\bf b})$, if the arc-ray or arc-arc pair that change from ${\bf a_i}$ to ${\bf a_{i+1}}$ are part of the same connected component in ${\bf a_{i}}w({\bf b})$ then ${\bf a_{i+1}}w({\bf b})$ will have one more component because we are splitting one component into two. On the other hand, if they are part of different connected components in ${\bf a_i}w({\bf b})$ then ${\bf a_{i+1}}w({\bf b})$ will have one less component because we are joining two components.
 
The one manifold ${\bf b}w({\bf b})$ has the maximum number of components possible, so $|{\bf a}w({\bf b})|\leq |{|bf b}w({\bf b})|$. Since each arrow move changes the number of components by 1, no  sequence from ${\bf a}$ to ${\bf b}$ can be shorter than $|{\bf b}w({\bf b})| - |{\bf a}w({\bf b})|$. The sequence given in the statement of the lemma has $|{\bf a}w({\bf b})| = |{\bf b}w({\bf b})| - m$. Since no shorter sequence can exist the given sequence is minimal. 
\end{proof}

Given ${\bf a},{\bf b}\in B^{n-k,k}$ label the $k$ connected components of ${\bf a}w({\bf b})$ with $c_1, \ldots, c_k$.  Associated to each $c_i$ let $$C_i = \{ 1\leq j \leq n : c_i \textup{ passes through vertex } j \textup{ in } {\bf a}w({\bf b})\}.$$ Let the collection of all of these sets be denoted by $\mathcal{C}_{{\bf a},{\bf b}} = \{ C_i\}$. A useful way to understand the distance between matchings is to study the sequence of collections $(\mathcal{C}_{{\bf a},{\bf b}}, \ldots, \mathcal{C}_{{\bf b},{\bf b}})$ associated to a sequence $({\bf a}={\bf a_0}, {\bf a_1},\ldots {\bf a_{m-1}}, {\bf a_m} ={\bf b})$. Restating Lemma \ref{break} in the language of the sets $\mathcal{C}_{{\bf a_i},{\bf b}}$ we have Lemma \ref{refine}.
\begin{lemma}\label{refine}
Let ${\bf a}\in B^{n-k,k}$ and ${\bf b}\in B_{\bf a}^{n-k,k}$, and consider a sequence $({\bf a} = {\bf a_0}, \ldots, {\bf a_m} = {\bf b})$. If for each $0\leq i \leq m-1$ the vertices of the arc-ray or arc-arc pair in ${\bf a_{i}}$ being changed when transitioning from $a_i$ to $a_{i+1}$ are part of a single set in $\mathcal{C}_{{\bf a_i},{\bf b}}$ then the sequence is minimal.
\end{lemma}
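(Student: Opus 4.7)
The plan is to prove Lemma \ref{refine} by a direct translation to Lemma \ref{break}. The first step I would take is to observe that the collection $\mathcal{C}_{{\bf a_i},{\bf b}}$ is exactly the partition of $\{1,\ldots,n\}$ induced by the connected components of ${\bf a_i}w({\bf b})$: two vertices $j$ and $j'$ lie in the same set $C \in \mathcal{C}_{{\bf a_i},{\bf b}}$ if and only if they are connected by a path in the one-manifold ${\bf a_i}w({\bf b})$. This is immediate from the definition of $\mathcal{C}_{{\bf a_i},{\bf b}}$.

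Next I would translate the hypothesis. The hypothesis of Lemma \ref{refine} says that for each $i$ the three or four vertices involved in the arrow move from ${\bf a_i}$ to ${\bf a_{i+1}}$ all lie in a single set of $\mathcal{C}_{{\bf a_i},{\bf b}}$. By the previous observation, this is equivalent to saying that the arc-arc or arc-ray pair being modified sits entirely inside one connected component of ${\bf a_i}w({\bf b})$. The case analysis already carried out in the proof of Lemma \ref{break} handles exactly this situation: performing the arrow move on a pair of features that live in a single component of ${\bf a_i}w({\bf b})$ splits that component into two, yielding $|{\bf a_{i+1}}w({\bf b})| = |{\bf a_i}w({\bf b})| + 1$. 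Applying this at each index $i$, the sequence $({\bf a} = {\bf a_0}, \ldots, {\bf a_m} = {\bf b})$ satisfies the hypothesis of Lemma \ref{break}, and that lemma concludes the argument.

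The step I expect to cost the most care is simply this translation: for each of the two arrow relations (the nested/unnested arc-arc move and the arc-ray swap) one must verify that the three or four involved vertices belonging to a single $C \in \mathcal{C}_{{\bf a_i},{\bf b}}$ really corresponds to the statement that the entire local configuration being modified, together with the features of ${\bf b}$ incident on those vertices, lies in one connected component of ${\bf a_i}w({\bf b})$. No new geometric input is required beyond what is already present in Lemma \ref{break}; indeed, Lemma \ref{refine} is best viewed as a restatement of Lemma \ref{break} in the more convenient combinatorial language of the collections $\mathcal{C}_{{\bf a_i},{\bf b}}$, which is precisely how the author introduces it.
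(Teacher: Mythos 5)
Your proof is correct and matches the paper's approach exactly: the paper offers no separate argument for this lemma, merely the remark that it is Lemma \ref{break} restated in the language of the collections $\mathcal{C}_{{\bf a_i},{\bf b}}$, and your write-up makes that translation explicit (vertices of the modified pair lying in one set of $\mathcal{C}_{{\bf a_i},{\bf b}}$ is precisely the condition that the pair lies in one connected component, which by the split/join analysis in Lemma \ref{break} forces $|{\bf a_{i+1}}w({\bf b})| = |{\bf a_i}w({\bf b})| + 1$).
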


For $n$ even and ${\bf a}, {\bf b}\in B^{n/2, n/2}$ Khovanov asserts that a sequence of the type described in Lemmas \ref{break} and \ref{refine} always exists and that it gives rise to a nice geometric formula for the distance between two matchings \cite[pg 9]{K}.
\begin{proposition}[{\bf Khovanov}]\label{CircleDist}
For $n$ even and ${\bf a},{\bf b}\in B^{n/2, n/2}$ the distance $d({\bf a},{\bf b})$ is equal to $n/2 - |{\bf a}w({\bf b})|$.
\end{proposition}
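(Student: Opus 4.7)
The plan is to combine the lower bound that is already implicit in the proof of Lemma \ref{break} with an explicit construction that realizes it. In the $(n/2, n/2)$ setting each arc of ${\bf b}$ is paired against its own horizontal reflection, so ${\bf b}w({\bf b})$ is always a disjoint union of $n/2$ circles and $|{\bf b}w({\bf b})| = n/2$. Since every arrow move changes $|{\bf a_i}w({\bf b})|$ by exactly $\pm 1$, any sequence connecting ${\bf a}$ to ${\bf b}$ has length at least $n/2 - |{\bf a}w({\bf b})|$, so the task reduces to exhibiting a sequence of exactly this length in which every move raises the component count by $1$; Lemma \ref{break} then certifies that the sequence is minimal.

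The main step is the following key claim: whenever ${\bf a} \neq {\bf b}$ in $B^{n/2, n/2}$ there exists ${\bf a'}$ with either ${\bf a} \to {\bf a'}$ or ${\bf a'} \to {\bf a}$ and $|{\bf a'}w({\bf b})| = |{\bf a}w({\bf b})| + 1$. Iterating the claim produces matchings ${\bf a} = {\bf a_0}, {\bf a_1}, \ldots, {\bf a_m}$ with $|{\bf a_m}w({\bf b})| = n/2$, which forces ${\bf a_m} = {\bf b}$, and the total length $m$ is precisely $n/2 - |{\bf a}w({\bf b})|$.

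I would prove the key claim by induction on $n$. Pick an adjacent arc $(j, j+1) \in {\bf b}$, which must exist since any nonempty noncrossing matching contains an innermost arc of this form. If $(j, j+1)$ also belongs to ${\bf a}$, then these two copies bound an isolated circle in ${\bf a}w({\bf b})$, and deleting the vertices $j, j+1$ from both matchings gives ${\bf a_0}, {\bf b_0} \in B^{n/2 - 1, n/2 - 1}$ with ${\bf a_0} \neq {\bf b_0}$ and $|{\bf a_0}w({\bf b_0})| = |{\bf a}w({\bf b})| - 1$; the inductive arrow move on the smaller matching lifts by a straightforward relabeling to an arrow move on ${\bf a}$, and isolation of the removed circle ensures the change in component count is preserved. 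If $(j, j+1) \notin {\bf a}$, then by the noncrossing property of ${\bf a}$ the vertex $j$ has partner $u < j$ in ${\bf a}$ and the vertex $j+1$ has partner $v > j+1$, so $(u, j), (j+1, v) \in {\bf a}$ are unnested with $u < j < j+1 < v$. Replacing them by the nested pair $(u, v), (j, j+1)$ yields a noncrossing matching ${\bf a'}$ and the arrow ${\bf a} \to {\bf a'}$. Because $(u, j)$ and $(j+1, v)$ are linked in ${\bf a}w({\bf b})$ through the ${\bf b}$-arc $(j, j+1)$, they lie in a common component of ${\bf a}w({\bf b})$, and the dichotomy highlighted in the proof of Lemma \ref{break} forces $|{\bf a'}w({\bf b})| = |{\bf a}w({\bf b})| + 1$.

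The main obstacle is the bookkeeping in the first case: one must check both that the arrow relation on the reduced matchings corresponds under the relabeling to a genuine arrow relation on the original matchings and that component counts behave additively under deletion of an isolated circle. Both facts follow once one notes that the deleted circle shares no vertices with any of the arcs being swapped, so the rest of the argument is routine.
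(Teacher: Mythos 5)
Your proposal takes a genuinely different route from the paper: the lower bound reasoning is the same as Lemma~\ref{break}, but for the upper bound you set up a single ``make one productive move'' lemma and iterate, whereas the paper first treats the case $|{\bf a}w({\bf b})|=1$ with an explicit narrowest-unpaired-arc algorithm and then obtains the general formula by summing over circles. Either architecture is viable. However, Case~2 of your key claim contains a genuine gap.

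When $(j,j+1)\notin {\bf a}$ it is \emph{not} true in general that the partner of $j$ in ${\bf a}$ is some $u<j$ and the partner of $j+1$ is some $v>j+1$. Take $n=4$, ${\bf a}=\{(1,4),(2,3)\}$, ${\bf b}=\{(1,2),(3,4)\}$. For the innermost arc $(1,2)$ of ${\bf b}$, the partner of $1$ in ${\bf a}$ is $4>2$; for the innermost arc $(3,4)$, the partner of $4$ in ${\bf a}$ is $1<3$. So neither choice of innermost ${\bf b}$-arc produces the unnested configuration you assert, and your Case~2 argument simply does not apply, yet Case~1 does not apply either since neither $(1,2)$ nor $(3,4)$ lies in ${\bf a}$. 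What is actually true is a trichotomy: since $(j,j+1)\notin{\bf a}$, the ${\bf a}$-partners of $j$ and $j+1$ each lie either below $j$ or above $j+1$, and noncrossing forces exactly one of three shapes. The first is the unnested shape $u<j<j+1<v$ you treat, and there the needed move is ${\bf a}\to{\bf a'}$. The other two are the nested shapes, either $(j,p)\supset(j+1,q)$ with $j<j+1<q<p$ or $(w,j+1)\supset(u,j)$ with $w<u<j<j+1$; in those shapes the required arrow goes the other direction, replacing the nested pair in ${\bf a}$ by the unnested pair that includes $(j,j+1)$, i.e.\ ${\bf a'}\to{\bf a}$. The same component-splitting reasoning from Lemma~\ref{break} still gives $|{\bf a'}w({\bf b})|=|{\bf a}w({\bf b})|+1$ since the two affected ${\bf a}$-arcs are joined through the ${\bf b}$-arc $(j,j+1)$, so the key claim is correct as stated (and your statement already allows ${\bf a'}\to{\bf a}$), but your proof as written omits these sub-cases and therefore does not establish it.
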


Corollary \ref{distance} extends Khovanov's result to the case where ${\bf a}\in B^{n-k,k}$ and ${\bf b}\in B_{\bf a}^{n-k,k}$. In order to prove this we appeal to the notion of completing matchings of type $(n-k,k)$ to matchings of type $(n-k,n-k)$. This gives a natural way to see the set $B^{n-k,k}$ sitting inside of $B^{n-k,n-k}$.

\begin{definition}[Completion]
Given ${\bf a}\in B^{n-k,k}$ enumerate the rays in ${\bf a}$ from left to right by $(i_1), (i_2), \ldots, (i_{n-2k})$. Define the function $$\varphi : B^{n-k,k} \rightarrow B^{n-k,n-k}$$ where $\varphi({\bf a})$ is the matching with arcs
\begin{itemize}
\item{$(i+n-2k, j+n-2k)$ if $(i,j)$ is an arc in ${\bf a}$}
\item{$(n-2k-t+1, i_{t}+n-2k)$ if $(i_t)$ is a ray in ${\bf a}$} 
\end{itemize}
In other words, we complete each ray in ${\bf a}$ to an arc by adding additional vertices to the left of the matching and connecting unanchored endpoints of the rays to these vertices in the only way that keeps the matching noncrossing. 
We call $\varphi({\bf a})$ the \it{completion of {\bf a}}. Figure \ref{completion} has an example.
\end{definition}

Let $\widetilde{B}^{n-k,n-k}$ be the subset of $B^{n-k,n-k}$ consisting of matchings with no arcs of the form $(i,j)$ for $0\leq i<j\leq n-2k$. There is a natural inverse operation to completion on this subset of $B^{n-k,n-k}$.

\begin{definition}[Restriction]
Consider ${\bf a}\in \widetilde{B}^{n-k,n-k}$ Define the function $$\psi: \widetilde{B}^{n-k,n-k} \rightarrow B^{n-k,k}$$ where $\psi({\bf a})$ is the matching with 
\begin{itemize}
\item{arcs $(i-n+2k, j-n+2k)$ if $(i,j)$ is an arc in ${\bf a}$ with $n-2k<i$}
\item{rays $(j-n+2k)$ if $(i,j)$ is an arc in ${\bf a}$ with $1\leq i \leq n-2k$}
\end{itemize}
In other words, we remove the first $n-2k$ vertices of ${\bf a}$ so that each arc with left endpoint incident on one of those vertices becomes a ray.
We call $\psi({\bf a})$ the {\it restriction of ${\bf a}$}. Figure \ref{completion} has an example.
\end{definition}

\begin{figure}[h]
\begin{picture}(150,20)(-20,0)
\qbezier(10, 0)(10,10)(20,10)
\qbezier(20,10)(30,10)(30,0)
\put(50,0){\line(0,1){20}}
\qbezier(70,0)(70,10)(80,10)
\qbezier(80,10)(90,10)(90,0)
\put(110,0){\line(0,1){20}}
\put(130,15){$\stackrel{\varphi}{\longrightarrow}$}
\put(130,0){$\stackrel{\psi}{\longleftarrow}$}
\end{picture}
\begin{picture}(200,20)(-70,0)
\qbezier(-30,0)(-30,30)(40,30)
\qbezier(40,30)(110,30)(110,0)
\qbezier(10, 0)(10,10)(20,10)
\qbezier(20,10)(30,10)(30,0)
\qbezier(-10,0)(-10,20)(20,20)
\qbezier(20,20)(50,20)(50,0)
\qbezier(70,0)(70,10)(80,10)
\qbezier(80,10)(90,10)(90,0)

\end{picture}
\caption{A matching $a\in B^{4,2}$ and its completion $\varphi(a)\in B^{4,4}$}\label{completion}
\end{figure}
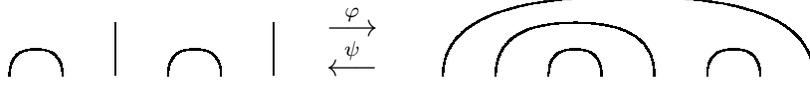

An immediate consequence of the definitions of restriction and completion is that $\varphi$ gives a one-to-one correspondence between the sets $B^{n-k,k}$ and $\widetilde{B}^{n-k,n-k}$. Note also that when $n$ is even and $k = n/2$ the sets $B^{n-k,k}$, $B^{n-k,n-k}$, and $\widetilde{B}^{n-k,n-k}$ are the same. Furthermore the maps $\varphi$ and $\psi$ are both the identity in this case.

\begin{proposition} \label{sequences}
Given ${\bf a}\in B^{n-k,k}$ and ${\bf b}\in B_{\bf a}^{n-k,k}$ every minimal sequence $(\varphi({\bf a}), \ldots, \varphi({\bf b}))$ gives a minimal sequence $({\bf a},\ldots, {\bf b})$.
\end{proposition}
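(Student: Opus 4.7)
The plan is to show that any minimal sequence $(\varphi({\bf a}) = {\bf c_0}, {\bf c_1}, \ldots, {\bf c_m} = \varphi({\bf b}))$ in $B^{n-k,n-k}$ lies entirely in $\widetilde{B}^{n-k,n-k}$, so that applying $\psi$ coordinatewise yields a sequence $({\bf a} = \psi({\bf c_0}), \ldots, \psi({\bf c_m}) = {\bf b})$ of valid arrow moves in $B^{n-k,k}$, and then to show this restricted sequence is minimal.

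First I would check that $\widetilde{B}^{n-k,n-k}$ is closed under arrow moves. In $B^{n-k,n-k}$ the only arrow moves are quadruple-type $(i,j),(k,l)\to(i,l),(j,k)$ with $i<j<k<l$; if the source lies in $\widetilde{B}^{n-k,n-k}$, then neither $(i,j)$ nor $(k,l)$ has both endpoints in $\{1,\ldots,n-2k\}$, forcing $j>n-2k$ and $l>n-2k$. The target arcs $(i,l),(j,k)$ inherit endpoints above $n-2k$, so the target is also in $\widetilde{B}^{n-k,n-k}$. Since both endpoints of the minimal sequence lie in $\widetilde{B}^{n-k,n-k}$, every ${\bf c_i}$ does too. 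The same constraint shows that at most one of $i,j,k,l$ can be $\leq n-2k$, and that vertex must be $i$: if all four vertices exceed $n-2k$, then $\psi$ shifts indices by $n-2k$ and the restricted move is a quadruple-type arrow in $B^{n-k,k}$; if $i\leq n-2k$, then the arcs $(i,j)$ and $(i,l)$ restrict to rays at $j-(n-2k)$ and $l-(n-2k)$, so the restricted step is a triple-type arrow on the positions $j-(n-2k)<k-(n-2k)<l-(n-2k)$. Either way the restricted transition is a legitimate arrow move.

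To prove minimality of the restricted sequence, I would establish the reciprocal lifting property: every arrow move in $B^{n-k,k}$ lifts via $\varphi$ to an arrow move in $B^{n-k,n-k}$ of the same length. A quadruple-type move lifts by a shift of indices. For a triple-type move $(u),(v,w)\to(u,v),(w)$, the noncrossing condition rules out any ray strictly between $u$ and $w$, since such a ray would cross the source arc $(v,w)$ or the target arc $(u,v)$. Hence the moved ray $(u)$ in ${\bf a}$ and the moved ray $(w)$ in ${\bf b}$ occupy the same ordinal position $s$ in their respective ray orderings, so under $\varphi$ both rays complete to arcs incident on the same new vertex $n-2k-s+1$. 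The lifted step is then the quadruple-type arrow on vertices $n-2k-s+1 < u+n-2k < v+n-2k < w+n-2k$ in $B^{n-k,n-k}$. Consequently any sequence of length $m'$ in $B^{n-k,k}$ from ${\bf a}$ to ${\bf b}$ lifts to a sequence of length $m'$ in $B^{n-k,n-k}$ from $\varphi({\bf a})$ to $\varphi({\bf b})$, so the minimum distance in $B^{n-k,k}$ is at least $m$. Since the restricted sequence attains length $m$, it is minimal.

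The main obstacle I anticipate is the ordinal matching of rays in the lifting step, namely verifying that a triple-type move always involves rays of the same ordinal position in its source and target matchings, so that $\varphi$ produces a single quadruple-type arrow rather than an unrelated pair of arc changes. This rests on using the noncrossing condition to exclude any rays strictly between $u$ and $w$; after that observation, the remaining steps reduce to index bookkeeping.
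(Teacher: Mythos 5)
Your argument for minimality, via lifting an arbitrary sequence in $B^{n-k,k}$ back to $B^{n-k,n-k}$ under $\varphi$ and concluding $d(\varphi({\bf a}),\varphi({\bf b})) \le d({\bf a},{\bf b})$, is correct and is a genuinely different (and arguably cleaner) route than the paper's, which instead invokes Khovanov's distance formula $d(\varphi({\bf a}),\varphi({\bf b})) = n-k-|\varphi({\bf a})w(\varphi({\bf b}))|$ together with the component-count lower bound from Lemma \ref{break}. Your observation that a triple-type move preserves the ordinal position of the moved ray, because the noncrossing condition forbids rays strictly between $u$ and $w$, is the right key fact for that step.

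However, there is a gap earlier: your claim that every intermediate ${\bf c_i}$ lies in $\widetilde{B}^{n-k,n-k}$ is not established by your argument. You assert that $\widetilde{B}^{n-k,n-k}$ is closed under arrow moves, but the verification you give only covers the $\rightarrow$ direction (unnested $(i,j),(k,l)$ going to nested $(i,l),(j,k)$). A minimal sequence between $\varphi({\bf a})$ and $\varphi({\bf b})$ can contain $\leftarrow$ steps, and $\widetilde{B}^{n-k,n-k}$ is not closed under $\leftarrow$: with $n-2k = 2$, a nested pair $(1,4),(2,3)$ has no arc with both endpoints $\le 2$, yet the $\leftarrow$ move produces the unnested pair $(1,2),(3,4)$, and the arc $(1,2)$ has both endpoints $\le 2$. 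So the inference ``since both endpoints of the minimal sequence lie in $\widetilde{B}^{n-k,n-k}$, every ${\bf c_i}$ does too'' does not follow from what you have shown. The conclusion is nonetheless true, but it requires the extra structure the paper exploits: minimality forces the collections $\mathcal{C}_{{\bf x_i},\varphi({\bf b})}$ to form a strictly refining chain (this is the content extracted from Lemma \ref{refine} and Lemma \ref{break}), and Lemma \ref{raypair} shows each component of $\varphi({\bf a})w(\varphi({\bf b}))$ contains at most one of the first $n-2k$ vertices; refinement propagates that constraint to every intermediate, which is exactly membership in $\widetilde{B}^{n-k,n-k}$. Without that refinement argument or some substitute, your proof does not go through.
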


\begin{proof}
Consider a minimal sequence $(\varphi({\bf a}) = {\bf x_0}, {\bf x_1}, \ldots, {\bf x_{m-1}}, {\bf x_m} = \varphi({\bf b}))$. Since this sequence is minimal Lemma \ref{refine} guarantees that for all $0\leq i< m$ the collection $\mathcal{C}_{{\bf x_i},\varphi({\bf b})}$ is a refinement of $\mathcal{C}_{{\bf x_{i-1}}, \varphi({\bf b})}$ in the sense that every set in $\mathcal{C}_{{\bf x_{i-1}}, \varphi({\bf b})}$ is a union of sets in $\mathcal{C}_{{\bf x_i},\varphi({\bf b})}$. In particular, $\mathcal{C}_{{\bf x_i},\varphi({\bf b})}$ contains exactly one more set than $\mathcal{C}_{{\bf x_{i-1}}, \varphi({\bf b})}$. 

By Lemma \ref{raypair} we know that the $i$th ray in ${\bf a}$ and the $i$th ray in ${\bf b}$ are part of the same line segment in ${\bf a}w({\bf b})$. Under $\varphi$ this line segment is completed to a circle passing through the newly added vertex $n-2k-i+1$. Thus each set in $\mathcal{C}_{\varphi({\bf a}), \varphi({\bf b})}$ contains at most one of the vertices $1, \ldots, n-2k$.  Indeed the sets coming from circles in $\varphi({\bf a})w(\varphi({\bf b}))$ that were line segments in ${\bf a}w({\bf b})$ each contain one of these vertices. 

Each set in $\mathcal{C}_{{\bf x_i},\varphi({\bf b})}$ is a refinement of the collection $\mathcal{C}_{\varphi({\bf a}),\varphi({\bf b})}$, so the sets in $\mathcal{C}_{{\bf x_i},\varphi({\bf b})}$ also each contain at most one of the vertices $1, \ldots, n-2k$. This means that for all $0\leq i \leq m$ the arcs in the matching ${\bf x_i}$ have at most one vertex incident on the vertices $1,\ldots, n-2k$. Therefore ${\bf x_i} \in \widetilde{B}^{n-k,n-k}$.  We can further conclude that whenever ${\bf x_i}\rightarrow {\bf x_{i+1}}$ then $\psi({\bf x_i})\rightarrow \psi({\bf x_{i+1}})$. Similarly whenever ${\bf x_i}\leftarrow {\bf x_{i+1}}$ then $\psi({\bf x_i})\leftarrow \psi({\bf x_{i+1}})$.

We conclude that the sequence $(\varphi({\bf a}) = {\bf x_0}, \ldots, {\bf x_m}=\varphi({\bf b}))$ has a corresponding sequence $({\bf a}, \psi({\bf x_1}), \ldots, \psi({\bf x_{m-1}}), {\bf b})$. 
Finally since
$$d(\varphi({\bf a}),\varphi({\bf b})) = n-k - |\varphi({\bf a})w(\varphi({\bf b}))| = n-k-|{\bf a}w({\bf b})|$$ we see that this sequence is minimal.
\end{proof}

Proposition \ref{sequences} allows us to extend Khovanov's geometric formula for the distance between matchings of type $(n/2, n/2)$ to matchings of type $(n-k,k)$.
\begin{corollary}\label{distance}
Let ${\bf a}\in B^{n-k,k}$ and ${\bf b}\in B_{\bf a}^{n-k,k}$. Then $d({\bf a},{\bf b}) = n-k-|{\bf a}w({\bf b})|$.
\end{corollary}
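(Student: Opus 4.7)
My plan is to derive the formula by sandwiching $d({\bf a},{\bf b})$ between an upper bound obtained from Proposition~\ref{sequences} and a lower bound extracted from the proof of Lemma~\ref{break}. The two bounds will meet exactly at $n-k-|{\bf a}w({\bf b})|$, once I verify that completion preserves the component count of the double.

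For the upper bound I would first apply Khovanov's Proposition~\ref{CircleDist} to the completions $\varphi({\bf a}),\varphi({\bf b})\in B^{n-k,n-k}$, giving a minimal sequence in $B^{n-k,n-k}$ of length $(n-k)-|\varphi({\bf a})w(\varphi({\bf b}))|$. By Proposition~\ref{sequences} this minimal sequence descends, via restriction, to a sequence $({\bf a},\ldots,{\bf b})$ of the same length in $B^{n-k,k}$. So $d({\bf a},{\bf b})\leq (n-k)-|\varphi({\bf a})w(\varphi({\bf b}))|$. I would then observe that $|\varphi({\bf a})w(\varphi({\bf b}))|=|{\bf a}w({\bf b})|$: by Lemma~\ref{raypair}, the $t$th ray of ${\bf a}$ and the $t$th ray of ${\bf b}$ always lie on a single line segment of ${\bf a}w({\bf b})$, and completion connects precisely these paired rays to the common added vertex $n-2k-t+1$, so each line segment of ${\bf a}w({\bf b})$ closes up to exactly one circle in $\varphi({\bf a})w(\varphi({\bf b}))$ while existing circles remain unchanged. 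Thus the component counts agree, giving $d({\bf a},{\bf b})\leq (n-k)-|{\bf a}w({\bf b})|$.

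For the matching lower bound I would reuse the analysis from the proof of Lemma~\ref{break}: any single arrow move in $B^{n-k,k}$ changes $|\cdot w({\bf b})|$ by exactly $\pm1$, so if $({\bf a}={\bf a_0},\ldots,{\bf a_m}={\bf b})$ is any sequence then $m \geq |{\bf b}w({\bf b})|-|{\bf a}w({\bf b})|$. Next I would compute $|{\bf b}w({\bf b})|$ directly: the $k$ arcs of ${\bf b}$ pair with their mirrors to form $k$ disjoint circles, and the $n-2k$ rays pair with their mirrors to form $n-2k$ vertical line segments, for a total of $n-k$ components. (This is also manifestly the maximum possible, since each circle in any ${\bf a}w({\bf b})$ consumes at least one arc from each side, bounding the number of circles by $k$, and the number of line segments is forced to be $n-2k$ by the ray count.) Hence $d({\bf a},{\bf b})\geq (n-k)-|{\bf a}w({\bf b})|$, and combining with the upper bound yields equality.

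The main obstacle is the identity $|\varphi({\bf a})w(\varphi({\bf b}))|=|{\bf a}w({\bf b})|$; this is where the hypothesis ${\bf b}\in B_{\bf a}^{n-k,k}$ enters essentially, since without pairing between ${\bf a}$-rays and ${\bf b}$-rays the completions could merge components and collapse the count. Once Lemma~\ref{raypair} is available, however, this step is essentially bookkeeping, and the corollary reduces to assembling the pieces above.
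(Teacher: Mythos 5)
Your proposal is correct and follows essentially the same route as the paper: the upper bound via Khovanov's formula on the completions $\varphi({\bf a}),\varphi({\bf b})$ descended through Proposition~\ref{sequences}, the identity $|\varphi({\bf a})w(\varphi({\bf b}))|=|{\bf a}w({\bf b})|$ via Lemma~\ref{raypair}, and the matching lower bound from the $\pm1$ component-count analysis of Lemma~\ref{break} are exactly the ingredients the paper packages into the proof of Proposition~\ref{sequences}, from which the corollary is read off. You have merely unpacked that argument explicitly, which is fine.
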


In \cite[Lemma 1]{K} Khovanov states that for $n$ even and ${\bf a}, {\bf b} \in B^{n/2, n/2}$ a very specific minimal sequence between ${\bf a}$ and ${\bf b}$ always exists. 

\begin{proposition}[{\bf Khovanov}] \label{Khovarrows}
Given ${\bf a}, {\bf b}\in B^{n/2, n/2}$ there exists ${\bf c}\in B^{n/2, n/2}$ with $d({\bf a},{\bf b}) = d({\bf a},{\bf c}) + d({\bf c},{\bf b})$ and ${\bf a}\succ {\bf c} \prec {\bf b}$.
\end{proposition}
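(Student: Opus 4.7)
Plan: I would proceed by induction on the distance $d = d({\bf a}, {\bf b})$, using Proposition \ref{CircleDist} (giving $d = n/2 - |{\bf a}w({\bf b})|$) together with the splitting/merging analysis of Lemma \ref{break}. The base case $d = 0$ is immediate: ${\bf a} = {\bf b}$ and ${\bf c} = {\bf a}$ works. For the inductive step the strategy is to produce a single-arrow descent from either ${\bf a}$ or ${\bf b}$ toward the other, recurse on the shorter distance, and patch via the triangle inequality.

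When $d \ge 1$, ${\bf a}w({\bf b})$ has some circle $C$ passing through at least $4$ vertices, and I would select such a $C$ that is \emph{innermost} in the planar nesting of circles of ${\bf a}w({\bf b})$. A counting argument then shows that at least one of ${\bf a}, {\bf b}$ has a pair of nested arcs on the vertices of $C$: if every ${\bf a}$-arc and every ${\bf b}$-arc on these vertices were side-by-side, the alternating traversal of $C$ would decompose it into 2-vertex circles, contradicting that $C$ has at least four vertices. Without loss of generality (the other case being symmetric) assume ${\bf b}$ has such a pair, and pick the innermost one on $C$: arcs $(I,L), (J,K) \in {\bf b}$ with $I < J < K < L$, all four endpoints on $C$, and no other ${\bf b}$-arc on $C$ strictly between them. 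Let ${\bf b}'$ be the matching obtained by replacing these with $(I,J), (K,L)$. Because $I,J,K,L$ all lie on the single circle $C$ of ${\bf a}w({\bf b})$, Lemma \ref{break} forces this change to split $C$ into two circles, so $|{\bf a}w({\bf b}')| = |{\bf a}w({\bf b})| + 1$ and hence $d({\bf a},{\bf b}') = d-1$; meanwhile ${\bf b}' \to {\bf b}$ gives ${\bf b}' \prec {\bf b}$. Applying the induction hypothesis to $({\bf a}, {\bf b}')$ yields ${\bf c}$ with ${\bf c} \prec {\bf a}$, ${\bf c} \prec {\bf b}'$, and $d({\bf a},{\bf b}') = d({\bf a},{\bf c}) + d({\bf c},{\bf b}')$; transitivity gives ${\bf c}\prec{\bf b}$, and the triangle inequality, pinched between $d({\bf a},{\bf c})+d({\bf c},{\bf b}) \ge d$ and $d({\bf a},{\bf c})+d({\bf c},{\bf b}')+1 = d$, yields the required equality.

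The principal obstacle is verifying that ${\bf b}'$ remains noncrossing, equivalently ruling out an intervening ${\bf b}$-arc $(p,q)$ with $p \in (I,J)$ and $q \in (K,L)$ in integer position. If $p$ lies on $C$, such an arc sits strictly between $(I,L)$ and $(J,K)$ as ${\bf b}$-arcs on $C$, contradicting the innermost-on-$C$ choice. If $p$ is not a vertex of $C$, a topological argument using innermostness of $C$ produces a contradiction: the arc underlying $(p,q)$ would have to pass through the annular region bounded below the horizontal line by $(I,L)$ and $(J,K)$, which is part of the planar interior of $C$, so points arbitrarily close to $p$ lie inside $C$'s interior; yet innermostness forces any vertex not on $C$ to lie strictly outside $C$'s interior, contradicting the previous statement. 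Making this topological exclusion rigorous, together with the bookkeeping for the symmetric case in which ${\bf a}$ carries the nested pair, constitutes the technical heart of the argument.
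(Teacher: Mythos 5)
Your overall induction framework — descend one $\leftarrow$ step from $\mathbf{a}$ or $\mathbf{b}$, recurse, and patch with the triangle inequality — is sound, and this is broadly the same skeleton the paper uses. But the crucial step, producing the descending move, is argued quite differently, and your argument has a genuine gap in the case you flag as the technical heart.

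The paper's proof does not try to rule out intervening arcs at all. After reducing to the case where no minimal sequence starts with $\leftarrow$, it shows (Lemma \ref{nestonly}) that the arcs of $\mathbf{a}$ on each circle of $\mathbf{a}w(\mathbf{b})$ are pairwise unnested, whence an all-$\rightarrow$ sequence exists. The key point is that if a circle $C$ does carry a nested pair of $\mathbf{a}$-arcs, then (via the mod-2 winding/intersection lemma) the arcs \emph{strictly between} that pair are nonempty and even in number, and (Lemma \ref{nestarcs}) among them some adjacent pair belongs to a single circle $C'' \neq C$, so a $\leftarrow$ move is available after all. The paper actively \emph{uses} the presence of intervening arcs rather than excluding them; there is no appeal to an innermost circle anywhere.

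Your strategy instead chooses an innermost circle $C$, the innermost nested $\mathbf{b}$-pair $(I,L),(J,K)$ on $C$, and argues that any intervening $\mathbf{b}$-arc not on $C$ would force a vertex into the planar interior of $C$, contradicting innermostness. The specific claim you rely on — that the region below the axis bounded by $(I,L)$ and $(J,K)$ lies in the interior of $C$ — is false, even for $C$ innermost. Take $n=6$, $\mathbf{a}=\{(1,4),(2,3),(5,6)\}$, $\mathbf{b}=\{(1,6),(2,5),(3,4)\}$. Then $\mathbf{a}w(\mathbf{b})$ is a single circle $C$, so $C$ is trivially innermost, and the innermost nested $\mathbf{b}$-pair on $C$ is $(2,5),(3,4)$. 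A vertical ray from any point of the region between these two arcs below the axis (or from the segment $(2,3)$ on the axis) crosses the $\mathbf{b}$-arcs $(1,6)$ and $(2,5)$ of $C$, hence exits $C$ an even number of times: that region is \emph{outside} $C$. The culprit is that $\mathbf{b}$-arcs of $C$ nesting around $(I,L)$ flip the parity, and choosing the \emph{innermost} nested pair maximizes how many such arcs there are. (In this tiny example the interval $(I,J)=(2,3)$ happens to be empty, so the lemma's conclusion is vacuously fine — but your argument for it is not.) You would need to reverse the choice (take an \emph{outermost} adjacent pair on $C$, for which no $\mathbf{b}$-arc of $C$ nests around $(I,L)$), and then still prove that the vertex $p$ itself, not just the segment $(I,I+\epsilon)$, lands inside $C$, since $C$ can have further vertices in $(I,J)$. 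As written, with the "innermost" choice, the argument does not go through; the paper sidesteps all of this by never needing the region to be inside $C$.
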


\begin{lemma}\label{arrows}
Given ${\bf a}\in B^{n-k,k}$ and ${\bf b}\in B_{\bf a}^{n-k,k}$ there exists ${\bf c}\in B_{\bf a}^{n-k,k}$ with $d({\bf a},{\bf b}) = d({\bf a},{\bf c}) + d({\bf c},{\bf b})$ and ${\bf a}\succ {\bf c} \prec {\bf b}$.
\end{lemma}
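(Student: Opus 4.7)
The plan is to lift the problem to the $(n-k,n-k)$ case via completion, apply Khovanov's Proposition~\ref{Khovarrows} there, and descend via restriction. First I consider $\varphi(\mathbf{a}), \varphi(\mathbf{b}) \in B^{n-k,n-k}$. Proposition~\ref{Khovarrows} furnishes some $\mathbf{c}^\ast \in B^{n-k,n-k}$ with $\varphi(\mathbf{a}) \succ \mathbf{c}^\ast \prec \varphi(\mathbf{b})$ and
$$d(\varphi(\mathbf{a}),\varphi(\mathbf{b})) = d(\varphi(\mathbf{a}),\mathbf{c}^\ast) + d(\mathbf{c}^\ast,\varphi(\mathbf{b})).$$

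Next I concatenate a minimal sequence witnessing $\varphi(\mathbf{a}) \succ \mathbf{c}^\ast$ with a minimal sequence witnessing $\mathbf{c}^\ast \prec \varphi(\mathbf{b})$ to produce a minimal sequence from $\varphi(\mathbf{a})$ to $\varphi(\mathbf{b})$. The argument in the proof of Proposition~\ref{sequences} shows that every matching along any minimal sequence from $\varphi(\mathbf{a})$ to $\varphi(\mathbf{b})$ actually lies in $\widetilde{B}^{n-k,n-k}$: the key point there is that each set in $\mathcal{C}_{\mathbf{x}_i,\varphi(\mathbf{b})}$ contains at most one of the vertices $1,\dots,n-2k$, so no arc of $\mathbf{x}_i$ can join two such vertices. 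Applying that to our sequence yields $\mathbf{c}^\ast \in \widetilde{B}^{n-k,n-k}$, so I may define $\mathbf{c} := \psi(\mathbf{c}^\ast) \in B^{n-k,k}$.

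Now I transport the order relations downward. The proof of Proposition~\ref{sequences} observed that whenever $\mathbf{x}_i \to \mathbf{x}_{i+1}$ in a minimal sequence inside $\widetilde{B}^{n-k,n-k}$ then $\psi(\mathbf{x}_i) \to \psi(\mathbf{x}_{i+1})$ (and likewise for $\leftarrow$). Applied to each half, this gives chains of arrows $\mathbf{a} \succ \cdots \succ \mathbf{c}$ and $\mathbf{c} \prec \cdots \prec \mathbf{b}$, so $\mathbf{a} \succ \mathbf{c} \prec \mathbf{b}$. Moreover, the descended sequences from $\mathbf{a}$ to $\mathbf{c}$ and from $\mathbf{c}$ to $\mathbf{b}$ have the same lengths as their lifts, so by Corollary~\ref{distance} together with the identity $|\mathbf{x}w(\mathbf{y})| = |\varphi(\mathbf{x})w(\varphi(\mathbf{y}))|$ used in Proposition~\ref{sequences}, we get $d(\mathbf{a},\mathbf{c}) = d(\varphi(\mathbf{a}),\mathbf{c}^\ast)$ and $d(\mathbf{c},\mathbf{b}) = d(\mathbf{c}^\ast,\varphi(\mathbf{b}))$, whence
$$d(\mathbf{a},\mathbf{c}) + d(\mathbf{c},\mathbf{b}) = d(\varphi(\mathbf{a}),\varphi(\mathbf{b})) = d(\mathbf{a},\mathbf{b}).$$

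The one step that requires genuine care, and which I expect to be the main obstacle, is verifying that $\mathbf{c}$ actually lies in $B_{\mathbf{a}}^{n-k,k}$ (and in $B_{\mathbf{b}}^{n-k,k}$), since this is what makes Corollary~\ref{distance} applicable in the descended setting. For this I will use Lemma~\ref{raypair} at the completed level: since $\varphi(\mathbf{a})w(\mathbf{c}^\ast)$ consists entirely of circles, and the completion sends the $t$-th ray of $\mathbf{a}$ to an arc ending at the new vertex $n-2k-t+1$, the restriction $\psi$ produces a line segment in $\mathbf{a}w(\mathbf{c})$ joining the $t$-th ray of $\mathbf{a}$ to the $t$-th ray of $\mathbf{c}$. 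In particular one endpoint points up and the other down, so $\mathbf{c} \in B_{\mathbf{a}}^{n-k,k}$; the same argument with $\mathbf{b}$ in place of $\mathbf{a}$ gives $\mathbf{b} \in B_{\mathbf{c}}^{n-k,k}$, completing the proof.
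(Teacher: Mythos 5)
Your proof follows essentially the same route as the paper: apply Khovanov's Proposition~\ref{Khovarrows} at the completed level, observe (via the argument of Proposition~\ref{sequences}) that all matchings along the resulting minimal sequence lie in $\widetilde{B}^{n-k,n-k}$, and then push the sequence down through $\psi$. Your last paragraph spells out the membership of $\mathbf{c}$ in $B^{n-k,k}_{\mathbf{a}}$ more explicitly than the paper does (the paper simply invokes "the proof of Proposition~\ref{sequences}"); just note that Lemma~\ref{raypair} as stated presupposes exactly that membership, so what you are really using is the underlying argument that the completion/restriction correspondence turns circles through the new vertices into line segments with one endpoint in each matching, not the lemma itself.
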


\begin{proof}
Proposition \ref{Khovarrows} guarantees the existence of a minimal sequence 
$$\varphi({\bf a}) \leftarrow {\bf a_1} \leftarrow \cdots \leftarrow {\bf c'} \rightarrow \cdots \rightarrow {\bf a_{m-1}} \rightarrow \varphi({\bf b}).$$
The proof of Proposition \ref{sequences} implies that  for all $1\leq i\leq m$ we have that $\psi({\bf a_i})\in B_{\bf a}^{n-k,k}$ and $\psi({\bf a_i})\in B_{\bf b}^{n-k,k}$. This gives us the minimal sequence $${\bf a} \leftarrow \psi({\bf a_1}) \leftarrow \cdots \leftarrow \psi({\bf c'}) \rightarrow \cdots \rightarrow \psi({\bf a_{m-1}}) \rightarrow {\bf b}.$$ Setting $\psi({\bf c'})={\bf c}$ the lemma is proven.
\end{proof}

\begin{lemma}\label{intersect}
Let ${\bf a}\in B^{n-k,k}$ and ${\bf b}\in B_{\bf a}^{n-k,k}$. If $d({\bf a},{\bf c}) = d({\bf a},{\bf b}) + d({\bf b},{\bf c})$ then $$S_{\bf a}\cap S_{\bf c} = S_{\bf a}\cap S_{\bf b}\cap S_{\bf c}.$$
\end{lemma}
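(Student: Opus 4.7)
The inclusion $S_{\bf a}\cap S_{\bf b}\cap S_{\bf c}\subseteq S_{\bf a}\cap S_{\bf c}$ is immediate. For the reverse inclusion I may assume ${\bf c}\in B_{\bf a}^{n-k,k}$, since otherwise $S_{\bf a}\cap S_{\bf c}$ is empty by Fung's proposition and the equality holds trivially. The plan is to induct on $d({\bf a},{\bf c})$, reducing to the core case $d({\bf a},{\bf b})=1$. Given general ${\bf b}$ on a minimal path ${\bf a}\to\cdots\to{\bf b}\to\cdots\to{\bf c}$, I will pick ${\bf b_1}$ adjacent to ${\bf a}$ on this path; both $({\bf a},{\bf b_1},{\bf c})$ and $({\bf b_1},{\bf b},{\bf c})$ then inherit the additive distance condition. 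The core case applied to $({\bf a},{\bf b_1},{\bf c})$ gives $S_{\bf a}\cap S_{\bf c}\subseteq S_{\bf b_1}$, and the induction hypothesis applied to $({\bf b_1},{\bf b},{\bf c})$ gives $S_{\bf b_1}\cap S_{\bf c}\subseteq S_{\bf b}$, which chains to the desired inclusion.

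For the core case the key observation is that a tuple $(x_1,\ldots,x_n)$ lies in $S_{\bf a}\cap S_{\bf c}$ precisely when its coordinates are constant on each connected component of the $1$-manifold ${\bf a}w({\bf c})$, with any line-segment component forcing its common value to equal $(-1)^i p$ at its ${\bf a}$-ray endpoint $i$. Since ${\bf b}$ sits on a minimal path to ${\bf c}$, Corollary \ref{distance} together with the component-counting in the proof of Lemma \ref{break} forces the single move ${\bf a}\to{\bf b}$ to increase $|Xw({\bf c})|$ by exactly $1$; hence the move splits one connected component of ${\bf a}w({\bf c})$ into two. The central geometric claim I need is that all $3$ or $4$ vertices involved in the move lie in the single component being split. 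For the arc-arc swap $(i,j)(k,l)\to(i,l)(j,k)$, the only alternative is that the cycle of ${\bf a}w({\bf c})$ containing these vertices visits them in the interleaved order $i,j,l,k$; but then the two disjoint subarcs of ${\bf a}w({\bf c})$ joining $j$ to $l$ and $k$ to $i$ would be curves in the lower half-plane with interleaved endpoints on the axis, and such curves are forced to cross, contradicting the embedded $1$-manifold structure.

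Once the affected vertices share a component of ${\bf a}w({\bf c})$, the new ${\bf b}$-arc equalities $x_i=x_l$ and $x_j=x_k$ are automatic. For the ray-arc swap $(i),(j,k)\to(i,j),(k)$ the relevant component is a line segment with ${\bf a}$-ray endpoint at $i$, so $x_i=x_j=x_k=(-1)^i p$; the new ${\bf b}$-ray condition $x_k=(-1)^k p$ then reduces to the parity identity $(-1)^i=(-1)^k$, which holds because in any noncrossing matching of type $(n-k,k)$ the $t$-th ray occupies a position of parity $(-1)^{t+1}$ (arcs enclose only balanced bracket structures and no rays), and the moved ray keeps its rank $t$ under the swap. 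All remaining ${\bf b}$-arcs and ${\bf b}$-rays coincide with those of ${\bf a}$, so their conditions are inherited directly from $S_{\bf a}$.

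The hard part will be the planarity argument ruling out the interleaved cycle order: it is a genuinely geometric step requiring that ${\bf a}w({\bf c})$ be treated as an embedded $1$-manifold in the plane rather than merely as an abstract graph. A secondary technicality is justifying that the inductive step's invocation respects the hypothesis ${\bf b}\in B_{\bf b_1}^{n-k,k}$; this can be handled by observing that the line-segment parity structure is preserved along a minimal sequence (via Lemma \ref{raypair} applied to each intermediate matching), or bypassed by noting that if $S_{\bf b_1}\cap S_{\bf b}$ is empty then one can reroute the induction using Lemma \ref{arrows} to pick an intermediate below both endpoints in the partial order.
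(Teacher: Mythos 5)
Your proof is correct in outline but takes a genuinely different route from the paper. The paper's proof is a one-step argument: from the minimal sequence $({\bf a},\ldots,{\bf b},\ldots,{\bf c})$ it extracts, via Lemma \ref{refine}, that both $\mathcal{C}_{{\bf a},{\bf b}}$ and $\mathcal{C}_{{\bf b},{\bf c}}$ refine $\mathcal{C}_{{\bf a},{\bf c}}$ (i.e., every component of ${\bf a}w({\bf b})$ and of ${\bf b}w({\bf c})$ sits inside a single component of ${\bf a}w({\bf c})$), from which the inclusion $S_{\bf a}\cap S_{\bf c}\subseteq S_{\bf a}\cap S_{\bf b}\cap S_{\bf c}$ is immediate; there is no further induction. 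You instead induct on $d({\bf a},{\bf b})$ down to the atomic case $d({\bf a},{\bf b})=1$, and in that base case establish that the pair of arcs (or arc and ray) being moved lies in a single component of ${\bf a}w({\bf c})$. What you gain is a completely explicit, local picture of why the lemma is true; what the paper's route gains is brevity, since the refinement statement does the bookkeeping for an arbitrary ${\bf b}$ along the minimal path at once.

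Two remarks on your base case. First, the planarity argument ruling out the interleaved cyclic order $i,j,l,k$ is not actually needed for what you want to conclude: once you invoke the dichotomy in the proof of Lemma \ref{break} (a single arrow move splits a component precisely when the moved pair lies in one component of the glued picture, and merges otherwise), the hypothesis that the move increases $|{\bf \cdot}w({\bf c})|$ by $1$ already forces all three or four affected vertices into a single component, and then $x_i=x_j=x_k=x_l$ gives the new arc equalities for free. The interleaved-order scenario is what would make ``in the same component but the count does not change by one'' possible; it is excluded once and for all by Lemma \ref{break}, whose proof (asserting $\pm 1$ always) implicitly contains the planarity you are re-deriving. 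So your geometric digression is a correct re-proof of part of Lemma \ref{break} rather than a new step. Second, your handling of the ray-arc swap via the fixed parity of the $t$-th ray is exactly right and matches the parity discussion used elsewhere in the paper (Lemma \ref{raypair}, Lemma \ref{subcomplex}). The technical point you flag at the end---that the IH applied to $({\bf b_1},{\bf b},{\bf c})$ needs ${\bf b}\in B_{\bf b_1}^{n-k,k}$, and that one must also be able to use the distance formula with ${\bf b_1}$ in place of ${\bf a}$---is real, but your proposed fix (parity of line-segment endpoints is preserved along a minimal sequence, via Lemma \ref{raypair}) is the right one, and the paper implicitly relies on the same fact when it asserts the refinement property for $\mathcal{C}_{{\bf a},{\bf b}}$.
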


\begin{proof}
Clearly $S_{\bf a}\cap S_{\bf b}\cap S_{\bf b} \subseteq S_{\bf a}\cap S_{\bf c}$.

Since $d({\bf a},{\bf c}) = d({\bf a},{\bf b}) + d({\bf b},{\bf c})$ there exists a minimal sequence $({\bf a},\ldots, {\bf b}, \ldots, {\bf c})$. Consider the collections $\mathcal{C}_{{\bf a},{\bf c}} = \mathcal{C}_{{\bf c},{\bf a}} = \{C_1, \ldots, C_{i_{{\bf a},{\bf c}}}\}$, $\mathcal{C}_{{\bf a},{\bf b}} = \{ C'_1, \ldots, C'_{i_{{\bf a},{\bf b}}}\}$, and $\mathcal{C}_{{\bf b},{\bf c}} = \{ C''_1, \ldots, C''_{i_{{\bf b},{\bf c}}}\}$. The minimal sequence ensures that the collections $\mathcal{C}_{{\bf a},{\bf b}}$ and $\mathcal{C}_{{\bf b},{\bf c}}$ are each refinements of the collection $\mathcal{C}_{{\bf a},{\bf c}}$. 

In terms of sets in $\mathcal{C}_{{\bf a},{\bf c}}$ we have $$S_{\bf a}\cap S_{\bf c} = \{ x=(x_1, \ldots, x_n): x_i = x_j \textup{ if there is } C_k \in \mathcal{C}_{{\bf a},{\bf c}} \textup{ with } i,j\in C_k \}\subset S_{\bf a}.$$ Whenever $i,j\in C_k$ there are some $C'_{k_1}\in \mathcal{C}_{{\bf a},{\bf b}}$ and $C_{k_2}\in \mathcal{C}_{{\bf b},{\bf c}}$ with $C_k\subset C'_{k_1}$ amd $C_k\subset C''_{k_2}$. Then $x\in S_{\bf a}\cap S_{\bf b}$ and $x\in S_{\bf b}\cap S_{\bf c}$. Finally we conclude that $S_{\bf a}\cap S_{\bf c}\subseteq S_{\bf a}\cap S_{\bf b}\cap S_{\bf c}$.
\end{proof}

\begin{lemma}\label{intersection}
 Let $S_{<{\bf a}} \cap S_{\bf a} = (\cup_{{\bf b}<{\bf a}} S_{\bf b}) \cap S_{\bf a}$. Then $S_{<{\bf a}} \cap S_{\bf a} = \cup_{{\bf b}\rightarrow {\bf a}} (S_{\bf b}\cap S_{\bf a})$.
\end{lemma}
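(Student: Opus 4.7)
The inclusion $\supseteq$ is immediate: any ${\bf b}$ with ${\bf b}\to{\bf a}$ satisfies ${\bf b}\prec{\bf a}$, hence ${\bf b}<{\bf a}$ in the total order extending $\prec$, so $S_{\bf b}\cap S_{\bf a}\subseteq S_{<{\bf a}}\cap S_{\bf a}$. For the reverse inclusion the plan is to show that for every ${\bf b}<{\bf a}$ with $S_{\bf a}\cap S_{\bf b}\neq\emptyset$, there exists some ${\bf c}$ with ${\bf c}\to{\bf a}$ and $S_{\bf a}\cap S_{\bf b}\subseteq S_{\bf a}\cap S_{\bf c}$. Fung's proposition forces any such ${\bf b}$ to lie in $B_{\bf a}^{n-k,k}$, which unlocks the distance machinery developed above.

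Given ${\bf b}$, I would first invoke Lemma \ref{arrows} on the pair $({\bf a},{\bf b})$ to extract a matching ${\bf c}'\in B_{\bf a}^{n-k,k}$ with ${\bf a}\succ{\bf c}'\prec{\bf b}$ and $d({\bf a},{\bf b})=d({\bf a},{\bf c}')+d({\bf c}',{\bf b})$. Observe that ${\bf c}'\neq{\bf a}$: otherwise ${\bf a}\prec{\bf b}$ would force ${\bf a}<{\bf b}$, contradicting ${\bf b}<{\bf a}$. Thus ${\bf c}'\prec{\bf a}$ strictly, and one may fix a minimal chain of arrows ${\bf c}'\to{\bf c}'_1\to\cdots\to{\bf c}'_{j-1}\to{\bf c}'_j={\bf a}$ of length $d({\bf c}',{\bf a})$. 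Define ${\bf c}={\bf c}'_{j-1}$, so that ${\bf c}\to{\bf a}$ is the desired candidate. The two applications of Lemma \ref{intersect} that finish the proof are then set up as follows. The first uses ${\bf c}'\in B_{\bf a}^{n-k,k}$ and the additivity $d({\bf a},{\bf b})=d({\bf a},{\bf c}')+d({\bf c}',{\bf b})$ to conclude $S_{\bf a}\cap S_{\bf b}=S_{\bf a}\cap S_{\bf c}'\cap S_{\bf b}\subseteq S_{\bf a}\cap S_{\bf c}'$. The second uses the additivity $d({\bf a},{\bf c}')=d({\bf a},{\bf c})+d({\bf c},{\bf c}')$ coming from the minimal arrow chain, together with ${\bf c}\in B_{\bf a}^{n-k,k}$, to conclude $S_{\bf a}\cap S_{\bf c}'=S_{\bf a}\cap S_{\bf c}\cap S_{\bf c}'\subseteq S_{\bf a}\cap S_{\bf c}$. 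Chaining the two containments yields $S_{\bf a}\cap S_{\bf b}\subseteq S_{\bf a}\cap S_{\bf c}$.

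The main obstacle is verifying the hypotheses of Lemma \ref{intersect} in the second application. That ${\bf c}\in B_{\bf a}^{n-k,k}$ reduces to a direct local inspection of the two arrow-move types: a 4-vertex nested/unnested swap produces a single circle in ${\bf c}w({\bf a})$ on the affected vertices, while a 3-vertex ray-arc swap produces a single line with endpoints pointing in opposite directions, so no ``bad'' line segment is introduced. The additivity $d({\bf a},{\bf c}')=d({\bf a},{\bf c})+d({\bf c},{\bf c}')$ requires that the chosen arrow chain from ${\bf c}'$ to ${\bf a}$ be minimal; this is guaranteed by Corollary \ref{distance} combined with the component-splitting analysis of Lemma \ref{break}, since each ``upward'' arrow in a chain refining ${\bf c}'\prec{\bf a}$ splits a single connected component of the intermediate one-manifold into two, so the chain of length $d({\bf c}',{\bf a})$ is indeed realized.
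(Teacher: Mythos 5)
Your argument is correct, but it takes a longer route than the paper's. The paper also starts by applying Lemma \ref{arrows} to get a minimal sequence ${\bf a}\leftarrow {\bf a_1}\leftarrow\cdots\leftarrow {\bf c}\rightarrow\cdots\rightarrow {\bf b}$, and then simply observes that this sequence must begin with a $\leftarrow$ move (otherwise it would consist entirely of $\rightarrow$ moves, forcing ${\bf a}<{\bf b}$). That immediately gives ${\bf a_1}\rightarrow{\bf a}$ together with $d({\bf a},{\bf b})=d({\bf a},{\bf a_1})+d({\bf a_1},{\bf b})$, so a \emph{single} application of Lemma \ref{intersect} finishes the job: $S_{\bf a}\cap S_{\bf b}=S_{\bf a}\cap S_{\bf a_1}\cap S_{\bf b}\subseteq S_{\bf a}\cap S_{\bf a_1}$. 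You instead extract the turning point ${\bf c}'$, build a new minimal chain from ${\bf c}'$ to ${\bf a}$, take its penultimate element, and apply Lemma \ref{intersect} twice. That works, but it forces you to reprove things the Lemma \ref{arrows} chain already hands you: your $\bf c$ is (up to a choice of chain) exactly the paper's ${\bf a_1}$, and your intermediate ${\bf c}'$ plays no role once you notice the chain starts with $\leftarrow$. One small imprecision: you justify that the $\rightarrow$-only chain from ${\bf c}'$ to ${\bf a}$ has length $d({\bf c}',{\bf a})$ by appealing loosely to Corollary \ref{distance} and Lemma \ref{break}; the clean justification is that the initial $\leftarrow$-segment of the Lemma \ref{arrows} sequence is itself a minimal sequence from ${\bf a}$ to ${\bf c}'$ (any initial segment of a minimal sequence is minimal), and reversing it gives the desired $\rightarrow$-only chain of length $d({\bf c}',{\bf a})$. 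Your local check that ${\bf c}\to{\bf a}$ implies ${\bf c}\in B_{\bf a}^{n-k,k}$ is fine, though it too is unnecessary once you follow the paper's shorter route, since the intermediate matchings produced in the proof of Lemma \ref{arrows} are already shown to lie in $B_{\bf a}^{n-k,k}$.
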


\begin{proof}
By basic set theory we have $S_{<{\bf a}} \cap S_{\bf a} = \cup_{{\bf b}<{\bf a}} (S_{\bf a}\cap S_{\bf b})$. Since ${\bf b}< {\bf a}$ whenever ${\bf b}\rightarrow {\bf a}$ we conclude that $\cup_{{\bf b}\rightarrow {\bf a}} (S_{\bf b}\cap S_{\bf a}) \subseteq S_{<{\bf a}} \cap S_{\bf a}$.  

Take $x\in S_{<{\bf a}}\cap S_{\bf a}$. Then there exists ${\bf b}<{\bf a}$ such that $x\in S_{\bf a}\cap S_{\bf b}$. By Lemma \ref{arrows} we have a minimal sequence $({\bf a}\leftarrow {\bf a_1} \cdots \leftarrow {\bf c} \rightarrow \cdots {\bf a_{m-1}} \rightarrow {\bf b})$. This sequence must begin with $\leftarrow$ since otherwise we would have a minimal sequence consisting entirely of $\rightarrow$ moves. The existence of such a sequence would mean that ${\bf a}<{\bf b}$, but we have assumed ${\bf b}<{\bf a}$. 

From the sequence we have $d({\bf a},{\bf b}) = d({\bf a}, {\bf a_1}) + d({\bf a_1},{\bf b})$. From Lemma \ref{intersect} we know 
$$S_{\bf a}\cap S_{\bf b} = S_{\bf a}\cap S_{{\bf a_1}} \cap S_{\bf b}\subseteq S_{\bf a}\cap S_{\bf a_1}.$$ We have shown that whenever ${\bf b}<{\bf a}$ there exists ${\bf a_1}\rightarrow {\bf a}$ with $S_{\bf a}\cap S_{\bf b} \subseteq S_{\bf a}\cap S_{{\bf a_1}}$. 
\end{proof}

\subsection{A useful cell decomposition}
Since each component $S_{\bf a}\in X_{n-k,k}$ is homeomorphic to a product of spheres, the components $S_{\bf a}$ have an obvious decomposition into even dimensional cells coming from their cartesian product structure. In Section \ref{hombas} we will explore the combinatorics of this decomposition in depth. It is useful in constructing an exact sequence on homology to appeal to another less obvious decomposition that generalizes the one given by Khovanov \cite[Lemma 4]{K}. 

\begin{definition}{(Decomposition of $S_a$)} \label{weirddecomp}
Given a matching ${\bf a}\in B^{n-k,k}$ we construct a graph $\Gamma = (V,E)$ with a vertex $v\in V$ for each arc in ${\bf a}$. Given $v\in V$ let $(v_1,v_2)$ with $v_1<v_2$ be the arc in $a$ corresponding to $v$. There is an edge between $v,w\in V$ if and only if there is some ${\bf b}\in B^{n-k,k}$ with ${\bf b}\rightarrow {\bf a}$ where ${\bf b}$ is identical to ${\bf a}$ off of vertices $v_1,v_2,w_1,w_2$. In ${\bf b}$ these vertices are connected in the only other possible noncrossing manner. 

The graph $\Gamma$ is a forest where each tree has a root corresponding to the outermost arc (i.e. the arc beneath which all other arcs with vertices in that tree are nested). Let $M$ be the set of all root vertices.  Figure \ref{graph} has an example.

Let $p = (0,0,1) \in S^2$ once again be the north pole of the standard embedded unit sphere. We construct a collection of cells $c(J)$ where $J \subseteq (E \cup M)$. 

Let $c(J) \subset \{ x = (x_1, \ldots, x_n)\in S_a\}$ such that
\begin{itemize}
\item{$x_{v_1} = x_{w_1}$ if edge $(v,w)\in J$}
\item{$x_{v_1} \neq x_{w_1}$ if edge $(v,w)\in E-J$}
\item{$x_{v_1} = (-1)^{v_1}p$ if vertex $v\in J$}
\item{$x_{v_1} \neq (-1)^{v_1}p$ if vertex $v\in M-J$}
\end{itemize}
Note that we only specify values for $x_{v_1}$ and $x_{w_1}$ because $x_{v_1} = x_{v_2}$ and $x_{w_1} = x_{w_2}$.
\end{definition} 

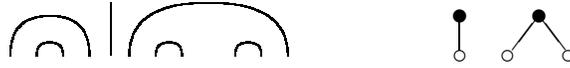
\begin{figure}[h]
\begin{picture}(200,20)(0,0)
\qbezier(10, 0)(10,5)(5,5)
\qbezier(5,5)(0,5)(0,0)
\qbezier(20,0)(20,15)(5,15)
\qbezier(5,15)(-10,15)(-10,0)
\put(28,0){\line(0,1){20}}
\qbezier(35,0)(35,20)(65,20)
\qbezier(65,20)(95,20)(95,0)
\qbezier(55, 0)(55,5)(50,5)
\qbezier(50,5)(45,5)(45,0)
\qbezier(85, 0)(85,5)(80,5)
\qbezier(80,5)(75,5)(75,0)
\put(160,15){\circle*{5}}
\put(160,2){\line(0,1){15}}
\put(160,0){\circle{4}}
\put(190,15){\circle*{5}}
\put(180,2){\line(3,4){10}}
\put(178,0){\circle{4}}
\put(200,2){\line(-3,4){10}}
\put(201,0){\circle{4}}
\end{picture}

\caption{A matching and its graph $\Gamma$.} \label{graph}
\end{figure}

\begin{lemma}
Each $c(J)$ is homeomorphic to $\mathbb{R}^{2k-|J|}$ and $S_a = \sqcup _{J} c(J)$.
\end{lemma}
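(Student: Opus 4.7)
The plan is to identify $S_{\bf a}$ with the product $(S^2)^k$ via the coordinates attached to the arcs of ${\bf a}$, and then exploit the forest structure of $\Gamma$ to exhibit an explicit parametrization of each cell $c(J)$.

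First I would observe that every point $x=(x_1,\ldots,x_n)\in S_{\bf a}$ is determined by the data $y_v := x_{v_1}=x_{v_2}$ for $v\in V$: the ray coordinates are already fixed by the defining conditions of $S_{{\bf a},n-k,k}$, and the arc conditions force the prescribed pairwise equalities. This identifies $S_{\bf a}$ with $(S^2)^k$ of real dimension $2k$. The disjoint union claim is then immediate: given $x\in S_{\bf a}$, define
\[J(x) = \{(v,w)\in E : y_v=y_w\}\cup\{v\in M : y_v=(-1)^{v_1}p\}.\]
By construction $x\in c(J)$ if and only if $J=J(x)$, so the cells $\{c(J)\}_{J\subseteq E\cup M}$ partition $S_{\bf a}$.

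The key step is to parametrize $c(J)$. I would process each tree $T$ of the forest $\Gamma$ in breadth-first order starting from its root $r\in M$. For the root $r$: if $r\in J\cap M$ then $y_r$ is pinned to $(-1)^{r_1}p$; otherwise $r\in M-J$ and $y_r$ ranges over $S^2\setminus\{(-1)^{r_1}p\}$, which is homeomorphic to $\mathbb{R}^2$ by stereographic projection. Continuing inductively, for a non-root vertex $v$ with parent $u$ already determined: if $(u,v)\in J\cap E$ then $y_v = y_u$; otherwise $(u,v)\in E-J$ and $y_v$ ranges over $S^2\setminus\{y_u\}\cong\mathbb{R}^2$. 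Each ``free'' choice contributes one $\mathbb{R}^2$ factor, and the number of free vertices is exactly $k-|J\cap E|-|J\cap M| = k-|J|$, yielding the stated cell dimension of $2(k-|J|)$.

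The step I expect to be the main obstacle is verifying that this assembly is an actual homeomorphism onto $c(J)$, not merely a continuous bijection. Bijectivity and continuity of the forward map follow directly from the construction. For continuity of the inverse, one must select the family of stereographic projections $S^2\setminus\{y_u\}\to\mathbb{R}^2$ so that it depends continuously on the basepoint $y_u$; this can be arranged, for instance, by composing a fixed stereographic projection with a continuously varying rotation carrying $y_u$ to the north pole (taking local sections near any antipodal singularity). Once this is in place, the BFS traversal of each tree of $\Gamma$ produces a global chart on $c(J)$ modeled on $\mathbb{R}^{2(k-|J|)}$, completing the proof.
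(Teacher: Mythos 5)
Your overall strategy is sound and essentially the one the paper intends, though the paper's own proof is a one-line gesture at ``trivial vector bundle structure'' while you spell out the BFS tower. The partition statement and the count of free vertices are correct; note that your dimension $2(k-|J|)$ is actually what one gets, so the $\mathbb{R}^{2k-|J|}$ in the lemma's statement appears to be a typo for $\mathbb{R}^{2(k-|J|)}$, and you have quietly corrected it.

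The one place you need to tighten is the continuity-of-the-inverse step, and your proposed remedy does not quite work as written. You suggest a continuously varying rotation $y_u\mapsto R_{y_u}\in SO(3)$ carrying $y_u$ to the north pole. No such map exists globally on $S^2$: it would be a section of the $SO(2)$-bundle $SO(3)\to S^2$, which is nontrivial. Taking ``local sections near any antipodal singularity'' gives only local trivializations, which by themselves do not yield a single homeomorphism onto $\mathbb{R}^{2(k-|J|)}$. There are two clean ways to close the gap, both using the same structural fact and both implicit in the paper's phrase. First, observe that at each BFS stage you have an $\mathbb{R}^2$-bundle over the previously constructed cell, which by induction is homeomorphic to some $\mathbb{R}^{2m}$ and hence contractible; any fiber bundle over a contractible paracompact base is trivial, so the total space is a product and the induction closes. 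Second, if you prefer an explicit chart, avoid rotations and instead use the translation trick: for the parent value $y_u$ lying in $S^2\setminus\{p'\}$, let $s$ be stereographic projection from $p'$, define $\sigma_{y_u}(z)=s^{-1}\bigl(s(z)-s(y_u)\bigr)$ for $z\neq p'$ and $\sigma_{y_u}(p')=p'$; this is a homeomorphism of $S^2$ moving $y_u$ to the fixed point $s^{-1}(0)$ and depending continuously on $y_u$, and post-composing with stereographic projection from $s^{-1}(0)$ gives the desired continuously varying family $S^2\setminus\{y_u\}\to\mathbb{R}^2$ without any singularity. Either route completes your argument.
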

\begin{proof}
This follows from the fact that these cells have a trivial vector bundle structure. The fact that we actually have a cell decomposition is clear by construction. 
\end{proof}

\begin{lemma} \label{subcomplex}
Given ${\bf b}\rightarrow {\bf a}$ the cell decomposition for $S_{\bf a}$ restricts to a cell decomposition for $S_{\bf a}\cap S_{\bf b}$. Therefore the decomposition for $S_{\bf a}$ also restricts to a decomposition for $S_{<{\bf a}}\cap S_{\bf a}$.
\end{lemma}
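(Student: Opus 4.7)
My plan is to prove the two halves of the lemma in turn. For the first half I would do a case analysis according to which of the two flavors of arrow move relates ${\bf b}$ to ${\bf a}$: either a quadruple move unnesting two arcs, or a triple move converting a ray-plus-arc $(i),(j,k)$ in ${\bf a}$ into an arc-plus-ray $(i,j),(k)$ in ${\bf b}$. The central claim is that in each case there is a naturally associated element $\alpha \in E \cup M$ in the indexing set of Definition \ref{weirddecomp} such that a cell $c(J)$ lies in $S_{\bf a}\cap S_{\bf b}$ if and only if $\alpha \in J$. Once that is established, $S_{\bf a}\cap S_{\bf b}$ is a disjoint union of whole cells of $S_{\bf a}$, hence a subcomplex.

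For a quadruple move, I would write ${\bf b}$'s two changing arcs as $(i,j)$ and $(k,l)$, so that ${\bf a}$ has the nested arcs $(i,l)$ and $(j,k)$; let $v,w\in V$ be the vertices of $\Gamma$ attached to these two ${\bf a}$-arcs. By the very definition of $E$ the pair $(v,w)$ is an edge, and I take $\alpha = (v,w)$. Given $x\in S_{\bf a}$, membership in $S_{\bf b}$ requires $x_i=x_j$ and $x_k=x_l$; modulo the $S_{\bf a}$-relations $x_i=x_l$ and $x_j=x_k$ these collapse to the single relation $x_{v_1}=x_{w_1}$, which by Definition \ref{weirddecomp} is exactly the condition $(v,w)\in J$.

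For a triple move, I would write ${\bf b}$ as having ray $(i)$ and arc $(j,k)$ and ${\bf a}$ as having arc $(i,j)$ and ray $(k)$, and let $v$ correspond to the ${\bf a}$-arc $(i,j)$; take $\alpha = v$. There are two things to verify. First, that $v\in M$, i.e.\ $(i,j)$ is outermost in ${\bf a}$: any arc in ${\bf a}$ strictly containing $(i,j)$ would also appear in ${\bf b}$ (since ${\bf a}$ and ${\bf b}$ agree off $\{i,j,k\}$), but there it would have to cross either the arc $(j,k)$ or the ray at $k$. Second, that the extra condition imposed by $S_{\bf b}$ reduces to $x_{v_1} = (-1)^{v_1}p$: combining the relations $x_i=x_j$, $x_j=x_k$, $x_i=(-1)^i p$ and $x_k=(-1)^k p$ gives precisely $x_{v_1}=(-1)^{v_1}p$, provided $i\equiv k \pmod 2$. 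For this I would observe that planarity of arc $(i,j)$ in ${\bf a}$ forbids any rays strictly between $i$ and $j$ (a ray would be infinitely high and hence cross the arc), so the vertices there pair into arcs and $j-i$ is odd; similarly planarity of arc $(j,k)$ in ${\bf b}$ forces $k-j$ odd, and together $k-i$ is even.

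The second half is then immediate: by Lemma \ref{intersection} we have $S_{<{\bf a}}\cap S_{\bf a} = \bigcup_{{\bf b}\rightarrow {\bf a}}(S_{\bf b}\cap S_{\bf a})$, and each summand is a subcomplex by the first half. The step I expect to require the most care is the triple case, where simultaneously justifying $v\in M$ and reconciling the two apparently different ray sign conventions $(-1)^i p$ in ${\bf b}$ and $(-1)^k p$ in ${\bf a}$ both hinge on the same planarity-and-parity argument; a mismatch would make $S_{\bf a}\cap S_{\bf b}$ empty rather than a cell, so it is essential to set the parity count up cleanly before reading off the cell index.
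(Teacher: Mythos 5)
Your proof is correct and follows essentially the same approach as the paper: a case split on whether ${\bf b}\rightarrow{\bf a}$ is an arc--arc or arc--ray move, identification of a single index $\alpha\in E\cup M$ governing membership in $S_{\bf a}\cap S_{\bf b}$, a planarity-and-parity check establishing $i\equiv k\pmod 2$ and that the arc $(i,j)$ is outermost (so $v\in M$), and an appeal to Lemma~\ref{intersection} for the second half. The only difference is that you argue the arc--arc case directly from Definition~\ref{weirddecomp}, whereas the paper disposes of it by citing Khovanov's Lemma~4.
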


\begin{proof}
Consider ${\bf b}\rightarrow {\bf a}$. Then either
\begin{itemize}
\item{there exist $i<j<k<l$ with $(i,j),(k,l)\in {\bf b}$ and $(i,l), (j,k)\in {\bf a}$, or}
\item{there exist $i<j<k$ with $(i),(j,k)\in {\bf b}$ and $(i,j),(k)\in {\bf a}$.}
\end{itemize}
The matchings ${\bf a}$ and ${\bf b}$ are identical otherwise.

In the first case, no rays change position as we move from ${\bf b}$ to ${\bf a}$. Thus we get that $S_{\bf a}\cap S_{\bf b}$ is a subcomplex of $S_{\bf a}$ by Khovanov \cite[Lemma 4]{K}. 

Consider the second case. In order for both ${\bf a}$ and ${\bf b}$ to be noncrossing, there can be no arcs $(i',k')$ or rays $(i')$ where $i<i'<j<k<k'$. Every arc with left endpoint $i'$ with $i<i'<j$ must have right endpoint $k'$ with $k'<j$, so all arcs incident on vertices between $i$ and $j$ are completely contained between $i$ and $j$. Therefore there is an even number of vertices between $i$ and $j$, and so $i$ and $j$ have opposite parity. Since there is an even number of vertices between $j$ and $k$ the vertices $i$ and $k$ have the same parity. 

Let $\Gamma_{\bf a} = (V_{\bf a},E_{\bf a})$ with distinguished vertex set $M_{\bf a}$ be the graph for ${\bf a}$. The vertex $v\in V_{\bf a}$ corresponding to the arc $(i,j)\in {\bf a}$ is a root, so $v\in M_{\bf a}$. Given $J\subset (V_{\bf a}\cup M_{\bf a})$ whenever $v\in J$ all points $x=(x_1, \ldots, x_n)\in c(J)$ have $ x_i =x_j = (-1)^i p$. For all $J$ and for all $x\in c(J)$ we also have that $x_k = (-1)^k p = (-1)^i p$ since $(k)$ is a ray in ${\bf a}$. Since $S_{\bf a}\cap S_{\bf b} = \{ x\in S_{\bf a}: x_i = x_j = x_k = (-1)^{i}p \}$ the cells $c(J)$ for which $v\in J$ form a cell complex for $S_{\bf a}\cap S_{\bf b}$. 

Appealing to Lemma \ref{intersection} we have that $S_{<{\bf a}}\cap S_{\bf a}$ is a subcomplex of the complex for $S_{\bf a}$.
\end{proof}

\subsection{Building an exact sequence}

The following two statements are consequences of Lemma \ref{intersection}.

\begin{lemma}\label{inj}
The inclusion $(S_{<{\bf a}} \cap S_{\bf a}) \hookrightarrow S_{\bf a}$ induces an injective homomorphism on homology $H_*(S_{<{\bf a}}\cap S_{\bf a}) \hookrightarrow H_*(S_{\bf a})$.
\end{lemma}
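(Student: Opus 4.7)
My plan is to exploit the CW decomposition of $S_{\bf a}$ from Definition \ref{weirddecomp} together with Lemma \ref{subcomplex}, which establishes that $S_{<{\bf a}}\cap S_{\bf a}$ is a subcomplex. The crucial feature I will use is that every cell $c(J)$ in that decomposition is an open Euclidean ball of \emph{even} real dimension: each element of $J$ imposes one equality on a copy of $S^2$, which is a codimension-two condition. Consequently both $S_{\bf a}$ and the subcomplex $S_{<{\bf a}}\cap S_{\bf a}$, and hence also the pair $(S_{\bf a}, S_{<{\bf a}}\cap S_{\bf a})$, have cellular chain complexes concentrated in even degrees.

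Because the cellular boundary shifts degree by one, any differential between two such groups must land in a zero group and therefore vanish. So all three cellular chain complexes have zero differentials, and in particular $H_i(S_{\bf a}, S_{<{\bf a}}\cap S_{\bf a}) = 0$ and $H_i(S_{<{\bf a}}\cap S_{\bf a}) = 0$ for every odd $i$.

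Injectivity then drops out of the long exact sequence of the pair,
\[
\cdots \to H_{i+1}(S_{\bf a}, S_{<{\bf a}}\cap S_{\bf a}) \to H_i(S_{<{\bf a}}\cap S_{\bf a}) \to H_i(S_{\bf a}) \to \cdots.
\]
For even $i$ the leftmost group is zero (odd degree), so the inclusion-induced map $H_i(S_{<{\bf a}}\cap S_{\bf a}) \to H_i(S_{\bf a})$ is injective; for odd $i$ the source itself vanishes, so injectivity is automatic.

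The real content of the argument is all contained in Lemma \ref{subcomplex}, which ensures that the cell structure of Definition \ref{weirddecomp} genuinely restricts to a cell structure on $S_{<{\bf a}}\cap S_{\bf a}$. Once that subcomplex property is granted together with the parity observation, the present lemma is purely formal and I do not anticipate any obstacle in writing it out.
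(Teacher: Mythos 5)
Your argument is correct, and it is the argument the paper has in mind even though the paper states Lemma \ref{inj} without proof (calling it, together with Lemma \ref{surj}, ``a consequence of Lemma \ref{intersection}''). The real ingredient is exactly what you isolate: Lemma \ref{subcomplex} (which itself invokes Lemma \ref{intersection} in its last line) shows that $S_{<{\bf a}}\cap S_{\bf a}$ is a subcomplex of $S_{\bf a}$ in the decomposition of Definition \ref{weirddecomp}, and every cell there is even-dimensional, so the cellular differentials of $S_{\bf a}$, of the subcomplex, and of the quotient pair all vanish, and the long exact sequence of the pair gives injectivity for free. One small point worth flagging: the dimension stated in the paper's lemma, $c(J)\cong\mathbb{R}^{2k-|J|}$, is a typo --- as you correctly observe, each element of $J$ cuts down by real codimension two, so the cell is $\mathbb{R}^{2(k-|J|)}$, which is what makes the parity argument go through. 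Your write-up silently corrects this and is otherwise complete.
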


\begin{lemma}\label{surj}
The homomorphism $\bigoplus_{{\bf b}\rightarrow {\bf a}} H_*(S_{\bf b}\cap S_{\bf a}) \stackrel{f'}{\longrightarrow} H_*(S_{<{\bf a}}\cap S_{\bf a})$ induced by inclusion is surjective.
\end{lemma}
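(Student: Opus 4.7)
The plan is to work entirely at the level of the cell decomposition introduced in Definition \ref{weirddecomp} and to exploit the fact that every cell $c(J)$ is homeomorphic to $\mathbb{R}^{2k-|J|}$, which is even-dimensional. First I would note that because $S_{\bf a}$ is a finite CW complex whose cells are all even-dimensional, its cellular chain complex has vanishing differentials (every $C_{\textup{odd}} = 0$), so $H_*(S_{\bf a})$ is canonically the free abelian group on the cells. The same holds for any subcomplex of $S_{\bf a}$.

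Next, by Lemma \ref{subcomplex}, for each ${\bf b}\to {\bf a}$ the intersection $S_{\bf a}\cap S_{\bf b}$ is a subcomplex of $S_{\bf a}$, and the union $S_{<{\bf a}}\cap S_{\bf a}$ is likewise a subcomplex. Thus the inclusions induce honest inclusions on cellular chains. I would then show that at the chain level the map
\[ \bigoplus_{{\bf b}\to {\bf a}} C_*(S_{\bf b}\cap S_{\bf a}) \longrightarrow C_*(S_{<{\bf a}}\cap S_{\bf a}) \]
is already surjective: given any cell $c(J)$ in $S_{<{\bf a}}\cap S_{\bf a}$, pick any point $x\in c(J)$. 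By Lemma \ref{intersection} we have $S_{<{\bf a}}\cap S_{\bf a} = \bigcup_{{\bf b}\to {\bf a}}(S_{\bf b}\cap S_{\bf a})$, so $x$ lies in some $S_{\bf b}\cap S_{\bf a}$ with ${\bf b}\to {\bf a}$. Since $S_{\bf b}\cap S_{\bf a}$ is a subcomplex of $S_{\bf a}$ and cells are either entirely contained in or entirely disjoint from any subcomplex, the whole cell $c(J)$ lies in $S_{\bf b}\cap S_{\bf a}$. Hence $c(J)$ is a cell of $S_{\bf b}\cap S_{\bf a}$ and its class on the left maps to its class on the right.

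Finally, since the differentials on both sides vanish for parity reasons, the map on cellular chains equals the map on homology, so $f'$ is surjective. The main step to verify carefully is the cell-containment argument in the middle paragraph: one must confirm that the two cell decompositions (of $S_{\bf a}\cap S_{\bf b}$ inside $S_{\bf a}$, and of $S_{<{\bf a}}\cap S_{\bf a}$ inside $S_{\bf a}$) are genuinely restrictions of the same decomposition of $S_{\bf a}$, which is exactly the content of Lemma \ref{subcomplex} together with Lemma \ref{intersection}. Everything else is a formal consequence of having only even-dimensional cells.
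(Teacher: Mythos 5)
Your proof is correct and uses exactly the machinery the paper assembles for this purpose: the cell decomposition of Definition \ref{weirddecomp}, the subcomplex statement of Lemma \ref{subcomplex}, and the union description $S_{<{\bf a}}\cap S_{\bf a} = \bigcup_{{\bf b}\to{\bf a}}(S_{\bf b}\cap S_{\bf a})$ from Lemma \ref{intersection}; the paper records Lemma \ref{surj} without proof as a consequence of Lemma \ref{intersection}, and your argument supplies precisely the intended details. One small remark: the dimension formula you quote from the paper, $\mathbb{R}^{2k-|J|}$, is not always even as literally written; each element of $J$ imposes a two-real-dimensional constraint, so the correct count is $\mathbb{R}^{2(k-|J|)}$ (a typo in the paper), and this is what makes your parity argument valid.
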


Now consider the Mayer-Vietoris Sequence:

$$\cdots \rightarrow H_m(S_{<{\bf a}}\cap S_{\bf a}) \stackrel{f}{\longrightarrow} H_m(S_{<{\bf a}}) \oplus H_m(S_{\bf a}) \stackrel{g}{\longrightarrow} H_m(S_{\leq {\bf a}})\rightarrow \cdots$$ 

\begin{lemma}\label{MV}
The space $S_{\leq {\bf a}}$ has nonzero homology only in even degrees. The Mayer-Vietoris sequence above reduces to short exact sequences of the form
$$0 \rightarrow H_{2m}(S_{<{\bf a}}\cap S_{\bf a}) \stackrel{f}{\longrightarrow} H_{2m}(S_{<{\bf a}}) \oplus H_{2m}(S_{\bf a}) \stackrel{g}{\longrightarrow} H_{2m}(S_{\leq {\bf a}}) \rightarrow 0.$$
\end{lemma}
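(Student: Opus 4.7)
The plan is induction on $\mathbf{a}$ with respect to the total order $<$ on $B^{n-k,k}$, proving both assertions simultaneously: the parity statement about $H_*(S_{\leq \mathbf{a}})$ and the short exactness.

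\textbf{Base case.} If $\mathbf{a}$ is the minimum, then $S_{<\mathbf{a}}=\emptyset$ and $S_{\leq\mathbf{a}}=S_\mathbf{a}$, a product of two-spheres. The homology is concentrated in even degrees and the short exact sequences are trivially satisfied.

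\textbf{Inductive step.} I assemble three parity facts. First, letting $\mathbf{a}'$ denote the immediate predecessor of $\mathbf{a}$, we have $S_{<\mathbf{a}}=S_{\leq\mathbf{a}'}$, so by inductive hypothesis $H_*(S_{<\mathbf{a}})$ is concentrated in even degrees. Second, $H_*(S_\mathbf{a})$ is concentrated in even degrees because $S_\mathbf{a}\cong(S^2)^k$. Third, by Lemma~\ref{subcomplex}, $S_{<\mathbf{a}}\cap S_\mathbf{a}$ is a subcomplex of the cell decomposition of $S_\mathbf{a}$ from Definition~\ref{weirddecomp}, whose cells $c(J)\cong\mathbb{R}^{2k-|J|}$ are all even-dimensional; hence $H_*(S_{<\mathbf{a}}\cap S_\mathbf{a})$ is also concentrated in even degrees.

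Now I feed these into the Mayer--Vietoris long exact sequence for the open-cover-up-to-homotopy $\{S_{<\mathbf{a}},S_\mathbf{a}\}$ of $S_{\leq\mathbf{a}}$. In any odd degree $2m+1$, the three ambient groups at levels $2m+1$ and $2m+1$ vanish, so the sequence reduces to
$$0\longrightarrow H_{2m+1}(S_{\leq\mathbf{a}})\longrightarrow H_{2m}(S_{<\mathbf{a}}\cap S_\mathbf{a})\stackrel{f}{\longrightarrow} H_{2m}(S_{<\mathbf{a}})\oplus H_{2m}(S_\mathbf{a}).$$
The map $f$ has as one component the inclusion-induced map $H_*(S_{<\mathbf{a}}\cap S_\mathbf{a})\to H_*(S_\mathbf{a})$, which is injective by Lemma~\ref{inj}. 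Therefore $f$ itself is injective and $H_{2m+1}(S_{\leq\mathbf{a}})=0$, completing the induction on the parity claim. With all odd-degree groups vanishing, the long exact sequence automatically splits into the asserted short exact sequences at every even degree $2m$.

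The only nontrivial input is Lemma~\ref{inj}; the rest is parity bookkeeping. The main conceptual point---and the one place where the hypotheses of this section are really used---is that Definition~\ref{weirddecomp} gives a CW structure on $S_\mathbf{a}$ with only even-dimensional cells that \emph{simultaneously} restricts to a subcomplex on $S_{<\mathbf{a}}\cap S_\mathbf{a}$. Without this compatibility, one could not conclude even-degree concentration for the intersection, and the Mayer--Vietoris sequence would fail to split.
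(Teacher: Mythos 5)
Your proof follows the paper's argument step for step: induction on the total order, even-degree concentration of the three Mayer--Vietoris pieces, and Lemma~\ref{inj} to force the odd-degree groups of $S_{\leq\mathbf{a}}$ to vanish. The one place you go beyond the paper's text is in spelling out, via Lemma~\ref{subcomplex} and the even-cell decomposition of Definition~\ref{weirddecomp}, why $H_*(S_{<\mathbf{a}}\cap S_\mathbf{a})$ is concentrated in even degrees --- a fact the paper asserts without comment.
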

\begin{proof}
We prove this inductively with respect to the order $<$. The base case is obvious since for the first ${\bf a}$ in  the order $S_{\leq {\bf a}} = S_{\bf a}$.  Assume the lemma is true for all matchings up to and including some matching ${\bf e}$, and let ${\bf a}$ be the next matching in the order. Then $S_{<{\bf a}} = S_{\leq {\bf e}}, S_{\bf a}$, and $S_{<{\bf a}}\cap S_{\bf a}$ all have homology in even degrees only.  Therefore the Mayer-Vietoris sequence decomposes into exact sequences 
$$0\rightarrow H_{2m+1}(S_{\leq {\bf a}}) \stackrel{\partial}{\longrightarrow} H_{2m}(S_{<{\bf a}}\cap S_{\bf a}) \stackrel{f}{\longrightarrow} H_{2m}(S_{<{\bf a}}) \oplus H_{2m}(S_{\bf a}) \stackrel{g}{\longrightarrow} H_{2m}(S_{\leq {\bf a}}) \rightarrow 0.$$

By Lemma \ref{inj} the map $H_{2m}(S_{<{\bf a}}\cap S_{\bf a}) \rightarrow H_{2m}(S_{\bf a})$ is injective so $H_{2m+1}(S_{\leq {\bf a}}) = 0$ as desired. 
\end{proof}

\begin{theorem}\label{sequence}
The sequence $$\bigoplus_{{\bf b}\rightarrow {\bf c}\leq {\bf a}} H_*(S_{\bf b}\cap S_{\bf c}) \stackrel{\psi^-}{\longrightarrow} \bigoplus_{{\bf b}\leq {\bf a}} H_*(S_{\bf b}) \stackrel{\phi}{\longrightarrow} H_*(S_{\leq {\bf a}}) \rightarrow 0$$ is exact. Here $\psi^-$ is defined to be $\Sigma_{{\bf b}\rightarrow {\bf c} \leq {\bf a}} (\psi_{{\bf b},{\bf c}} - \psi_{{\bf c},{\bf b}})$ where $\psi_{{\bf b},{\bf c}}: H_*(S_{\bf b}\cap S_{\bf c}) \rightarrow H_*(S_{\bf b})$ is induced by the inclusion $S_{\bf b}\cap S_{\bf c} \subset S_{\bf b}$ and $\psi_{{\bf c},{\bf b}}$ is similarly defined. The map $\phi$ is induced by the inclusions $S_{\bf b}\subset S_{\leq {\bf a}}$.
\end{theorem}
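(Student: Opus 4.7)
The plan is to proceed by induction on ${\bf a}$ with respect to the total order $<$. For the base case, when ${\bf a}$ is the minimum noncrossing matching, we have $S_{\leq {\bf a}} = S_{\bf a}$, the source of $\psi^-$ is empty (any pair ${\bf b}\rightarrow {\bf c}\leq {\bf a}$ would force ${\bf b}<{\bf a}$, which is impossible), and $\phi$ reduces to the identity map on $H_*(S_{\bf a})$, so exactness is immediate.

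For the inductive step, let ${\bf e}$ denote the predecessor of ${\bf a}$ in the total order, so that $S_{<{\bf a}} = S_{\leq {\bf e}}$, and assume the theorem holds for ${\bf e}$ with associated map $\psi^-_{\bf e}$ and quotient map $\phi_{\bf e}$. I would combine three ingredients: the Mayer--Vietoris short exact sequence of Lemma \ref{MV}, the surjection $\bigoplus_{{\bf b}\rightarrow {\bf a}} H_*(S_{\bf b}\cap S_{\bf a})\rightarrow H_*(S_{<{\bf a}}\cap S_{\bf a})$ from Lemma \ref{surj}, and the inductive hypothesis. Surjectivity of $\phi$ follows quickly: Lemma \ref{MV} makes $g$ surjective and induction makes $\phi_{\bf e}$ surjective, so the composite is surjective as well. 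The containment of the image of $\psi^-$ in $\ker \phi$ is formal since, for each generator indexed by ${\bf b}\rightarrow {\bf c}\leq {\bf a}$, the two inclusion-induced maps $S_{\bf b}\cap S_{\bf c}\hookrightarrow S_{\bf b}\hookrightarrow S_{\leq {\bf a}}$ and $S_{\bf b}\cap S_{\bf c}\hookrightarrow S_{\bf c}\hookrightarrow S_{\leq {\bf a}}$ coincide, so the two contributions cancel.

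The main work is showing $\ker \phi \subseteq \operatorname{image}(\psi^-)$. Given $(y_{\bf b})_{{\bf b}\leq {\bf a}}\in \ker \phi$, split it as $((y_{\bf b})_{{\bf b}\leq {\bf e}}, y_{\bf a})$ and let $y_{<{\bf a}}\in H_*(S_{<{\bf a}})$ denote the image of the first piece under $\phi_{\bf e}$. The pair $(y_{<{\bf a}}, y_{\bf a})$ lies in $\ker g$, so by Mayer--Vietoris there is $z\in H_*(S_{<{\bf a}}\cap S_{\bf a})$ with $f(z) = (y_{<{\bf a}}, \pm y_{\bf a})$. Lift $z$ via Lemma \ref{surj} to $\tilde z = \sum_{{\bf b}\rightarrow {\bf a}}z_{\bf b}$ with $z_{\bf b}\in H_*(S_{\bf b}\cap S_{\bf a})$, and view $\tilde z$ as an element of $\bigoplus_{{\bf b}\rightarrow {\bf c}\leq {\bf a}}H_*(S_{\bf b}\cap S_{\bf c})$ supported on pairs with ${\bf c} = {\bf a}$. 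Unwinding the definitions (the key point being that the MV map $j_*: H_*(S_{<{\bf a}}\cap S_{\bf a})\to H_*(S_{\bf a})$ factors through the sum $\sum_{\bf b}\psi_{{\bf a},{\bf b}}(z_{\bf b})$), one checks that the ${\bf a}$-component of $\psi^-(\tilde z)$ equals $y_{\bf a}$ and the remaining components lie entirely among the ${\bf b}\leq {\bf e}$ summands. Hence $(y_{\bf b}) - \psi^-(\tilde z)$ is supported on ${\bf b}\leq {\bf e}$ and its image under $\phi_{\bf e}$ is $y_{<{\bf a}} - i_*(z) = 0$; the inductive hypothesis then produces $w$ with $\psi^-_{\bf e}(w) = (y_{\bf b}) - \psi^-(\tilde z)$, and since the source of $\psi^-_{\bf e}$ sits inside the source of $\psi^-$ the chase is complete.

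The main obstacle will be the bookkeeping: nailing down the sign conventions in the Mayer--Vietoris maps $f$ and $g$ and verifying the compatibilities $i_*(z) = \sum_{\bf b} \iota_*\psi_{{\bf b},{\bf a}}(z_{\bf b})$ and $j_*(z) = \sum_{\bf b}\psi_{{\bf a},{\bf b}}(z_{\bf b})$, where $\iota$ denotes inclusion $S_{\bf b}\hookrightarrow S_{<{\bf a}}$. These factorizations through inclusions are precisely what bridge the abstract Mayer--Vietoris output and the combinatorial map $\psi^-$, but once they are pinned down no further geometric input beyond Lemmas \ref{MV} and \ref{surj} is needed.
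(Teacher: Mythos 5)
Your proposal is correct and follows the same inductive Mayer--Vietoris strategy as the paper, using Lemmas \ref{MV} and \ref{surj} together with the inductive hypothesis. Your write-up is in fact more explicit than the paper's at the step establishing $\ker\phi\subseteq\operatorname{image}(\psi^-)$, where the paper only records the one-line observation that the relevant inclusion-induced maps coincide; your diagram chase is the intended unpacking of that remark.
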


\begin{proof}
This proof follows the structure of the proof of \cite[Proposition 4]{K}. We prove this result by induction with respect to the total order on the set $B^{n-k,k}$. Let ${\bf a_0}$ be the first element with respect to this order. Then there do not exist ${\bf b},{\bf c}\leq {\bf a_0}$ and $S_{\leq {\bf a_0}} = S_{{\bf a_0}}$. The sequence
$$ 0\rightarrow H_*(S_{{\bf a_0}}) \stackrel{\phi}{\longrightarrow} H_*(S_{{\bf a_0}}) \rightarrow 0$$ is exact since $\phi$ is an isomorphism, and the base case is proven.

Assume for some element ${\bf a_{i-1}}\in B^{n-k,k}$ that the sequence
$$\bigoplus_{{\bf b}\rightarrow {\bf c}\leq {\bf a_{i-1}}} H_*(S_{\bf b}\cap S_{\bf c}) \stackrel{\psi^-}{\longrightarrow} \bigoplus_{{\bf b}\leq {\bf a_{i-1}}} H_*(S_{\bf b}) \stackrel{\phi}{\longrightarrow} H_*(S_{\leq {\bf a_{i-1}}}) \rightarrow 0$$ is exact. Consider the next element ${\bf a_i}$. By Lemma \ref{MV} we know that 
$$0 \rightarrow H_{*}(S_{<{{\bf a_i}}}\cap S_{{\bf a_i}}) \stackrel{f}{\longrightarrow} H_{*}(S_{<{\bf a_i}}) \oplus H_{*}(S_{{\bf a_i}}) \stackrel{g}{\longrightarrow} H_{*}(S_{\leq {\bf a_i}}) \rightarrow 0$$ is an exact sequence.  By Lemma \ref{surj} the map $f':\bigoplus_{{\bf b}\rightarrow {\bf c}\leq {\bf a_i}} H_*(S_{\bf b}\cap S_{\bf c}) \rightarrow H_*(S_{<{\bf a_i}}\cap S_{{\bf a_i}})$ is surjective so $Im(f) = Im(f\circ f')$. Thus we get the exact sequence
$$\bigoplus_{{\bf b}\rightarrow {\bf c}\leq {\bf a_i}} H_*(S_{\bf b}\cap S_{\bf c})\stackrel{f'\circ f}{\longrightarrow} H_{*}(S_{<{\bf a_i}}) \oplus H_{*}(S_{{\bf a_i}}) \stackrel{g}{\longrightarrow} H_{*}(S_{\leq {\bf a_i}}) \rightarrow 0.$$

Now consider the sequence 
$$\bigoplus_{{\bf b}\rightarrow {\bf c}\leq {{\bf a_i}}} H_*(S_{\bf b}\cap S_{\bf c}) \stackrel{\psi^-}{\longrightarrow} \bigoplus_{{\bf b}\leq {\bf a_i}} H_*(S_{\bf b}) \stackrel{\phi}{\longrightarrow} H_*(S_{\leq {\bf a_i}}) \rightarrow 0.$$ Since the inclusions $S_{\bf b}\cap S_{\bf c} \subset S_{<{\bf a_i}} \subset S_{\leq {\bf a_i}}$ and $S_{\bf b}\cap S_{\bf c} \subset S_{\bf b} \subset S_{\leq {\bf a_i}}$ and $S_{\bf b}\cap S_{\bf c} \subset S_{\bf c}\subset S_{\leq {\bf a_i}}$ all induce the same maps on homology we see that $Im(\psi^-) = Ker(\phi)$. Take some $y\in H_*(S_{\leq {\bf a_i}})$. By exactness there exists $(x,x')\in H_{*}(S_{<{\bf a_i}}) \oplus H_{*}(S_{{\bf a_i}})$ with $g(x,x') = y$. Again by exactness, there exists some $x''\in \bigoplus_{{\bf b}\leq {\bf a_{i-1}}} H_*(S_{\bf b})$ with $\phi(x'') = x$. Then we have that $\phi(x'',x') = g(x,x') = y$, and the desired sequence is exact.
\end{proof}

\section{Diagrammatic Homology Generators}\label{hombas}

Let ${\bf a},{\bf b}\in B^{n-k,k}$ with ${\bf a}\rightarrow {\bf b}$. Recall that $S_{\bf a}$ is diffeomorphic to $(S^2)^k$ and $S_{\bf a}\cap S_{\bf b}$ is diffeomorphic to $(S^2)^{k-1}$. In this section we discuss the cartesian product cell decomposition for $S_{\bf a}$ giving a convenient diagrammatic way to represent the cells.

Let $E$ be the set of arcs in ${\bf a}$, and for each $e\in E$ let $e_{\ell}$ be the left endpoint of the arc $e$ and $e_r$ be the right endpoint. Note that for each $x = (x_1, \ldots, x_n)\in S_{\bf a}$ we have that $x_{e_{\ell}} = x_{e_r}$. Therefore to describe a subset of $S_{\bf a}$ we only need to specify the value of $x_{e_{\ell}}$. 

Consider the cell decomposition of the two-sphere given by the point $p = (0,0,1)\in S^2$ and the two-cell $c = S^2-p$. 
\begin{definition} {(Diagrammatic Homology Basis for $S_{\bf a}$)}\label{diagdec}

Given $I\subset E$ we define the following cell $c(I)$ which is homeomorphic to $\mathbb{R}^{2|I|}$.
$$c(I) = \{ x = (x_1, \ldots, x_n)\in S_{\bf a} : x_{e_{\ell}} = p \textup{ if } e\notin I \textup{ and } x_{e_{\ell}} \neq p \textup{ if } e\in I \}.$$
Varying over all possible subsets of $E$ we get the cartesian product cell decomposition for $S_{\bf a}$.
Note that this decomposition is in general not the same as the one described in Definition \ref{weirddecomp}. 
We can diagrammatically represent the cartesian product homology generators by considering certain markings of the matching ${\bf a}$. 

Given $I\subset E$ let $M$ be the matching ${\bf a}$ decorated as follows:
\begin{itemize}
\item{Mark each ray with a dot.}
\item{For each $e\in I$ mark the corresponding arc in ${\bf a}$ with a dot.}
\item{Leave all other arcs unmarked.}
\end{itemize}

We call such a diagram a dotted noncrossing matching for ${\bf a}$. These will be denoted throughout by capital letters. Figure \ref{markedmatch} has an example.
\end{definition}

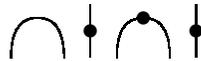
\begin{figure}[h]
\begin{picture}(150,20)(-20,0)
\qbezier(10, 0)(10,15)(20,15)
\qbezier(20,15)(30,15)(30,0)
\put(40,0){\line(0,1){20}}
\qbezier(50,0)(50,15)(60,15)
\qbezier(60,15)(70,15)(70,0)
\put(80,0){\line(0,1){20}}
\put(60,15){\circle*{5}}
\put(80,10){\circle*{5}}
\put(40,10){\circle*{5}}
\end{picture}
\caption{A dotted noncrossing matching $M$ for ${\bf a}$.}\label{markedmatch}
\end{figure}

We use a similar diagrammatic convention to represent cartesian product cells for $(S^2)^n$ in \cite[Definition 3.1]{RT}. These diagrams, which we call line diagrams, are collections of  $n$ dotted and undotted vertical lines. The common theme of dotted noncrossing matchings and line diagrams is that unmarked lines and arcs correspond to two-cells while dotted lines and arcs correspond to points. 
We want to consider the behavior of these generators under maps induced by inclusion.

\begin{definition}
Consider ${\bf a}\rightarrow {\bf b} \in B^{n-k,k}$. Then either ${\bf a}$ and ${\bf b}$ agree off of two arcs or an arc and a ray. The first two relations deal with the case when ${\bf a}$ and ${\bf b}$ differ on two arcs. The third relation deals with the case when they differ on an arc and a ray.
\begin{enumerate}
\item {\bf Type I relations:}  Let $M_1$ be the dotted noncrossing matching with dotted arc $(i,j)$  and undotted arc $(k,l)$.  Let $M_2$ be the dotted noncrossing matching with dotted arc $(k,l)$ and undotted arc $(i,j)$.  Define $M'_1$ and $M'_2$ analogously for ${\bf b}$, so that they agree with the $M_i$ off of $i,j,k,l$.  Type I relations have the form
\[M_1+M_2-M'_1-M'_2=0.\]
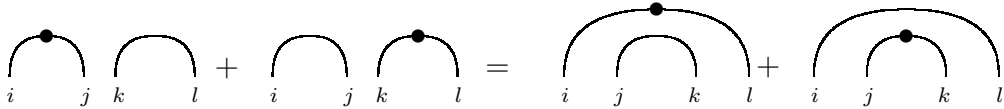
\begin{figure}[h]
\begin{equation*}
 \begin{picture}(85,10)(0,0)
\qbezier(10, 0)(10,15)(24,15)
\qbezier(24,15)(38,15)(38,0)
\qbezier(50,0)(50,15)(65,15)
\qbezier(65,15)(80,15)(80,0)
\put(24,15){\circle*{5}}
 \put(9,-10){\tiny{$i$}}
 \put(37,-10){\tiny{$j$}}
 \put(49,-10){\tiny{$k$}}
 \put(79,-10){\tiny{$l$}}
 \end{picture} + 
 \begin{picture} (80,10)(0,0)
 \qbezier(10, 0)(10,15)(24,15)
\qbezier(24,15)(38,15)(38,0)
\qbezier(50,0)(50,15)(65,15)
\qbezier(65,15)(80,15)(80,0)
\put(65,15){\circle*{5}}
 \put(9,-10){\tiny{$i$}}
 \put(37,-10){\tiny{$j$}}
 \put(49,-10){\tiny{$k$}}
 \put(79,-10){\tiny{$l$}}
 \end{picture} \hspace{.1in} = \hspace{.1in} 
 \begin{picture}(80,10)(0,0)
 \qbezier(10, 0)(10,25)(45,25)
\qbezier(45,25)(80,25)(80,0)
\qbezier(30,0)(30,15)(45,15)
\qbezier(45,15)(60,15)(60,0)
\put(45,25){\circle*{5}}
 \put(9,-10){\tiny{$i$}}
 \put(29,-10){\tiny{$j$}}
 \put(57,-10){\tiny{$k$}}
 \put(79,-10){\tiny{$l$}}
 \end{picture} + 
 \begin{picture}(80,10)(0,0)
 \qbezier(10, 0)(10,25)(45,25)
\qbezier(45,25)(80,25)(80,0)
\qbezier(30,0)(30,15)(45,15)
\qbezier(45,15)(60,15)(60,0)
\put(45,15){\circle*{5}}
 \put(9,-10){\tiny{$i$}}
 \put(29,-10){\tiny{$j$}}
 \put(57,-10){\tiny{$k$}}
 \put(79,-10){\tiny{$l$}}
 \end{picture}
\end{equation*}
\caption{Type I relation}\label{tp1}
\end{figure}
\item {\bf Type II relations:} Let $M_3$ be the dotted noncrossing matching with dotted arcs $(i,j)$ and $(k,l)$. Define $M'_3$ analogously for ${\bf b}$.  Type II relations have the form
\[M_3-M'_3=0.\]
\begin{figure}[h]
\begin{equation*}
 \begin{picture}(85,10)(0,0)
\qbezier(10, 0)(10,15)(24,15)
\qbezier(24,15)(38,15)(38,0)
\qbezier(50,0)(50,15)(65,15)
\qbezier(65,15)(80,15)(80,0)
\put(24,15){\circle*{5}}
\put(65,15){\circle*{5}}
 \put(9,-10){\tiny{$i$}}
 \put(37,-10){\tiny{$j$}}
 \put(49,-10){\tiny{$k$}}
 \put(79,-10){\tiny{$l$}}
 \end{picture}
 = 
 \begin{picture}(80,10)(0,0)
 \qbezier(10, 0)(10,25)(45,25)
\qbezier(45,25)(80,25)(80,0)
\qbezier(30,0)(30,15)(45,15)
\qbezier(45,15)(60,15)(60,0)
\put(45,25){\circle*{5}}
\put(45,15){\circle*{5}}
 \put(9,-10){\tiny{$i$}}
 \put(29,-10){\tiny{$j$}}
 \put(57,-10){\tiny{$k$}}
 \put(79,-10){\tiny{$l$}}
 \end{picture}
\end{equation*}
\caption{Type II relation}\label{tp2}
\end{figure}
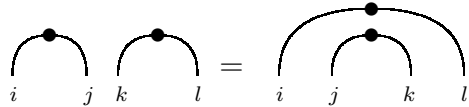
\item{\bf Type III relations:} Let $M_4$ be the dotted noncrossing matching with dotted ray $(i)$ and dotted arc $(j,k)$. Let $M'_4$ be the dotted noncrossing matching identical to $M_4$ except with dotted arc $(i,j)$ and dotted ray $(k)$. Type III relations have the form
\[M_4-M'_4=0 \]

\begin{figure}[h]
\begin{equation*}
\begin{picture}(50,20)(0,0)
\qbezier(0,0)(0,15)(15,15)
\qbezier(15,15)(30,15)(30,0)
\put(40,0){\line(0,1){24}}
\put(15,15){\circle*{5}}
\put(40,13){\circle*{5}}
\put(-2,-10){\tiny{$i$}}
\put(26,-10){\tiny{$j$}}
\put(37,-10){\tiny{$k$}}
\end{picture}
=
\begin{picture}(50,20)(-10,0)
\qbezier(10,0)(10,15)(25,15)
\qbezier(25,15)(40,15)(40,0)
\put(0,0){\line(0,1){24}}
\put(25,15){\circle*{5}}
\put(0,13){\circle*{5}}
\put(-2,-10){\tiny{$i$}}
\put(8,-10){\tiny{$j$}}
\put(38,-10){\tiny{$k$}}
\end{picture}
\end{equation*}
\end{figure}
\end{enumerate}
\end{definition}

\begin{theorem}
Let $\mathcal{T}$ be the submodule of $\bigoplus_{{\bf b}} H_*(S_{\bf b})$ generated by all Type I, Type II, and Type III relations. Then there is the following isomorphism. 
$$H_*(X_{n-k,k}) \cong \left( \bigoplus_{\bf b} H_*(S_{\bf b}) \right)/ \mathcal{T}$$
\end{theorem}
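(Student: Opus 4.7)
The plan is to apply Theorem \ref{sequence} with ${\bf a}$ equal to the maximum element of the total order on $B^{n-k,k}$. Since then $S_{\leq {\bf a}} = X_{n-k,k}$, the theorem yields the exact sequence
\[\bigoplus_{{\bf b}\rightarrow {\bf c}} H_*(S_{\bf b}\cap S_{\bf c}) \stackrel{\psi^-}{\longrightarrow} \bigoplus_{\bf b} H_*(S_{\bf b}) \stackrel{\phi}{\longrightarrow} H_*(X_{n-k,k}) \rightarrow 0,\]
where the sums range over all matchings in $B^{n-k,k}$. Thus $H_*(X_{n-k,k}) \cong \bigl(\bigoplus_{\bf b} H_*(S_{\bf b})\bigr)/Im(\psi^-)$, and the task reduces to identifying $Im(\psi^-) = \mathcal{T}$.

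Next I would analyze the intersection $S_{\bf b}\cap S_{\bf c}$ for each arrow ${\bf b}\rightarrow {\bf c}$. In the arc-arc case with $(i,j),(k,l)\in {\bf b}$ and $(i,l),(j,k)\in {\bf c}$, imposing the defining equations of both matchings forces $x_i = x_j = x_k = x_l$, merging the two $S^2$-factors into one shared $S^2$. The remaining $k-2$ arcs contribute unchanged $S^2$-factors, so $S_{\bf b}\cap S_{\bf c} \cong (S^2)^{k-1}$. In the arc-ray case with $(i),(j,k)\in {\bf b}$ and $(i,j),(k)\in {\bf c}$, the three coordinates $x_i, x_j, x_k$ all collapse to the fixed ray value (they share parity by the argument in Lemma \ref{subcomplex}), and only the $k-1$ unchanged arcs contribute $S^2$-factors. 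In each case, homology generators of the intersection are described by K\"unneth choices of dot on each unchanged arc, together with a dot choice on the merged factor in the arc-arc case.

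The key computation is the pushforward of each intersection generator into $H_*(S_{\bf b})$ and $H_*(S_{\bf c})$ under inclusion. On unchanged arcs the inclusion is the identity on that $S^2$-factor. On the merged factor in the arc-arc case, the inclusion restricts to the diagonal $\Delta\colon S^2 \hookrightarrow S^2 \times S^2$, whose homological effect is $\Delta_*[S^2] = [S^2]\otimes[\mathrm{pt}] + [\mathrm{pt}]\otimes[S^2]$ and $\Delta_*[\mathrm{pt}] = [\mathrm{pt}]\otimes[\mathrm{pt}]$. Translating these through the diagrammatic convention of Definition \ref{diagdec}, one choice of dot on the merged factor pushes to $M_1+M_2$ in $S_{\bf b}$ and $M'_1+M'_2$ in $S_{\bf c}$, producing the Type I relation $M_1+M_2-M'_1-M'_2 = 0$; the other choice pushes to $M_3$ in $S_{\bf b}$ and $M'_3$ in $S_{\bf c}$, producing the Type II relation $M_3-M'_3 = 0$. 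For the arc-ray case, the collapse is to a single point on both sides, so the generator pushes to $M_4$ in $S_{\bf b}$ and $M'_4$ in $S_{\bf c}$, producing the Type III relation $M_4-M'_4 = 0$.

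Combining these computations across all arrows, and noting that varying the decoration on the unchanged arcs appears identically on both sides of each relation (since the inclusions act as the identity on those factors), we conclude that $Im(\psi^-)$ is precisely the submodule $\mathcal{T}$ generated by the Type I, II, and III relations. The exact sequence then gives $H_*(X_{n-k,k}) \cong \bigl(\bigoplus_{\bf b} H_*(S_{\bf b})\bigr)/\mathcal{T}$. The main obstacle is the K\"unneth computation for the diagonal pushforward and the careful bookkeeping to verify that, under the paper's dot convention, each intersection generator maps to exactly one of the three prescribed relation types rather than to some additional, extraneous relations.
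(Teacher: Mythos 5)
Your proof matches the paper's approach: both apply Theorem \ref{sequence}, reduce the problem to identifying $Im(\psi^-)$ with the submodule $\mathcal{T}$, and verify this by computing how the cartesian-product homology generators of each intersection $S_{\bf b}\cap S_{\bf c}$ push forward under the two inclusions (the diagonal on the merged $S^2$-factor in the arc-arc case, collapse to a point in the arc-ray case, identity on unchanged arcs). Your write-up is slightly more explicit than the paper's in saying why the unchanged arcs contribute identical decorations on both sides so that the general relation reduces to the local computation, but the underlying argument is the same.
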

\begin{proof}
From Theorem \ref{sequence} we see that $\bigoplus_{{\bf b}} H_*(S_{\bf b})$ generates $H_*(X_{n-k,k})$ with relations coming from $\psi^-(H_*(S_{\bf a}\cap S_{\bf b}))$ where ${\bf a}\rightarrow {\bf b}$. The following calculations show that these relations always have the form of Type I, II, or III relations. We use the notation for cartesian product cells found in Definition \ref{diagdec}.

Consider ${\bf a},{\bf b}\in B^{2,2}$ where ${\bf a}$ has arcs $(1,2), (3,4)$ and ${\bf b}$ has arcs $(1,4), (2,3)$. Then ${\bf a}w({\bf b})$ consists of one circle, and $S_{\bf a}\cap S_{\bf b}$ is homeomorphic to $S^2$. Decompose $S_{\bf a} \cap S_{\bf b}$ into
\begin{itemize}
\item{$c_1 = \{ (p,p,p,p)\in S_{\bf a}\cap S_{\bf b}\} \cong \mathbb{R}^0$ and}
\item{$c_2 = \{ (x,x,x,x) \in S_{\bf a}\cap S_{\bf b}: x\neq p\} \cong \mathbb{R}^2$.}
\end{itemize} 
Under the inclusion map $\iota_{\bf a}: H_*(S_{\bf a}\cap S_{\bf b}) \hookrightarrow H_*(S_{\bf a})$ we have that $\iota_{\bf a}(c_1) = c(\emptyset)$ and $\iota_{\bf a}(c_2) = c(\{(1,2)\}) + c(\{(3,4)\})$. Similarly under the inclusion map $\iota_{\bf b}: H_*(S_{\bf a}\cap S_{\bf b}) \hookrightarrow H_*(S_{\bf b})$ we have that $\iota_{\bf b}(c_1) = c(\emptyset)$ and $\iota_{\bf b}(c_2) = c(\{(1,4)\}) + c(\{(2,3)\})$. This proves that $\psi^-(H_*(S_{\bf a}\cap S_{\bf b}))$ is generated by Type I and Type II relations whenever ${\bf a}$ and ${\bf b}$ differ on two arcs. 

Consider ${\bf a},{\bf b}\in B^{3,1}$ where ${\bf a}$ has ray $(1)$ and arc $(2,3)$ while ${\bf b}$ has ray $(3)$ and arc $(1,2)$. In this case ${\bf a}w({\bf b})$ consists of one line and no circles, so $S_{\bf a}\cap S_{\bf b}$ is just a point. Call this point $c_1$. Under the inclusion map $\iota_{\bf a}: H_*(S_{\bf a}\cap S_{\bf b}) \hookrightarrow H_*(S_{\bf a})$ we have that $\iota_{\bf a}(c_1) = c(\emptyset)$. Under the inclusion $\iota_{\bf b}: H_*(S_{\bf a}\cap S_{\bf b}) \hookrightarrow H_*(S_{\bf b})$ we get $\iota_{\bf b}(c_1) = c(\emptyset)$. This proves that whenever ${\bf a}$ and ${\bf b}$ differ on an arc and a ray, the image $\psi^-(H_*(S_{\bf a}\cap S_{\bf b}))$ is generated by Type III relations. 
\end{proof}

\begin{definition}
A standard dotted noncrossing matching is a dotted noncrossing matching where no dotted arc is nested beneath any other arc and no ray is to the right of any dotted arc. Figure \ref{tabmat} has an example.
\end{definition}

\begin{lemma}\label{gens}
The set of standard noncrossing matchings contains a set of generators for $H_*(X_{n-k,k})$
\end{lemma}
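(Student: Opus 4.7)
The plan is to induct on a complexity measure that vanishes exactly on standard matchings, using the Type I, II, and III relations from the previous theorem to reduce any dotted noncrossing matching to a linear combination of standard ones.

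For a dotted noncrossing matching $M$, define $\alpha(M)$ to be the total depth of dotted arcs (summing, over dotted arcs $e$ of $M$, the number of arcs strictly containing $e$) and $\beta(M)$ to be the number of pairs $(e, r)$ with $e$ a dotted arc and $r$ a ray lying strictly to the right of the right endpoint of $e$. Then $M$ is standard precisely when $\alpha(M) = \beta(M) = 0$. The goal is to show that any non-standard $M$ can be rewritten, modulo Type I, II, and III relations, as a $\mathbb{Z}$-linear combination of matchings with strictly smaller $\nu(M) := \alpha(M) + \beta(M)$; iterating this reduction produces the lemma.

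If $\alpha(M) > 0$, pick a dotted arc $(a,b)$ of $M$ that is innermost among the nested dotted arcs and let $(c,d)$ be its direct parent arc, so $c<a<b<d$ with no arcs strictly between. When $(c,d)$ is undotted, the Type I relation on the quadruple $c<a<b<d$ writes $M = M_1 + M_2 - M_1'$, where $M_1, M_2$ are the two unnested configurations with single dotted arc $(c,a)$ or $(b,d)$, and $M_1'$ is the nested configuration with outer arc $(c,d)$ dotted. In each of these, the altered dotted arc lives one level shallower than $(a,b)$ did in $M$, and all other dots and ray positions are preserved, so $\alpha$ strictly drops while $\beta$ is unchanged. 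When $(c,d)$ is already dotted, Type II instead identifies $M$ with the unnested-both-dotted matching, in which both affected dotted arcs drop one level, again shrinking $\alpha$.

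If $\alpha(M) = 0$ but $\beta(M) > 0$, every dotted arc is at the top level. Locate a dotted arc $(p,q)$ immediately followed in the top-level block order by a ray at position $q+1$, and apply the Type III relation to the triple $(p, q, q+1)$: the local pattern ``dotted arc then ray'' is replaced by ``ray then dotted arc,'' yielding ray at $p$ and dotted arc $(q, q+1)$. This decreases $\beta$ by one, while the new dotted arc is still outermost, so $\alpha$ stays zero. The main technical obstacle, which I expect to be the real work of the proof, is guaranteeing that such an adjacent dotted-arc/ray pair always exists: a ray contributing to $\beta$ may be separated from every dotted arc to its left by intervening undotted arcs. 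I would handle this by first shuttling the ray leftward past each intervening undotted arc using the companion undotted form of Type III, which arises in the preceding theorem's proof from the $0$-cell of $S_{\bf a} \cap S_{\bf b}$ yielding the relation $c(\emptyset)_{\bf a} = c(\emptyset)_{\bf b}$, i.e., precisely the swap of a ray past an undotted arc. Applying these steps until the ray is adjacent to a dotted arc, then applying Type III as above, strictly decreases $\nu$; the induction terminates at a standard matching.
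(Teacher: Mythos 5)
Your inductive framework with $\nu = \alpha + \beta$ is a reasonable way to make precise the paper's two-sentence argument, and your Step 1 (using Type I and II to decrease $\alpha$) is correct. The gap is in Step 2. You read the Type III relation as a purely local swap of an \emph{adjacent} dotted arc and ray, which is why you feel compelled to ``shuttle'' a ray past intervening undotted arcs first, and you invoke a ``companion undotted form of Type III'' for that shuttling. No such relation exists in the paper. The arc--ray relations produced by $\psi^-$ always involve the arc at positions $j,k$ (resp.\ $i,j$) in its \emph{dotted} configuration, because in the intersection $S_{\bf a}\cap S_{\bf b}$ the coordinates along the line segment of ${\bf a}w({\bf b})$ are pinned to the value $-p$, which is the dotted ($x_{e_\ell}\neq p$) position; there is no way to get an ``undotted Type~III'' from $\psi^-$. (You can get $c(\emptyset)_{\bf a}=c(\emptyset)_{\bf b}$ only in the bottom degree, where every arc is undotted; that is useless for diagrams carrying dots.)

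The good news is that no shuttling is needed. The $\rightarrow$ relation for a triple $i<j<k$ only requires that ${\bf a}$ and ${\bf b}$ agree off $\{i,j,k\}$; the vertices strictly between $i$ and $j$, and those strictly between $j$ and $k$, may carry arbitrary complete internal matchings that simply change nesting level under the move. So if all dotted arcs are top-level and a ray at $k$ sits to the right of a dotted arc, choose the dotted arc $(i,j)$ with $j<k$ maximal such that there is no ray and no dotted arc between $j$ and $k$ at top level (such a choice exists). Apply Type~III directly: the ray moves from $k$ to $i$, the dotted arc moves from $(i,j)$ to $(j,k)$, the undotted arcs that were between $j$ and $k$ become nested under the new dotted arc $(j,k)$, and the undotted arcs that were nested under $(i,j)$ pop out to top level. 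Since only undotted arcs change depth, $\alpha$ stays $0$; and one checks directly that $\beta$ drops by exactly one. With this corrected Step 2, your induction on $\nu$ closes and matches the paper's intended argument.
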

\begin{proof}
Type I and Type II relations allow us to reduce to a set of generators where no dotted arcs are nested beneath undotted or dotted arcs. Type III relations allow us to move rays to the left of all dotted arcs.
\end{proof}

Results of De Concini-Procesi \cite[p 213]{DP} and Garsia-Procesi \cite[Equation 4.2]{GP} specialized to the two-row case yield the following result which gives us the dimension of $H_*(X_{n-k,k})$ in each degree.

\begin{proposition}[{\bf De Concini-Procesi, Garsia-Procesi}] \label{GP}
For for each $m \leq k$ the cohomology $H^{2m}(X_{n-k,k})$ is the irreducible representation of the symmetric group $S_n$ corresponding to the partition $(n-m,m)$.
\end{proposition}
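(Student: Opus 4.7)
Since this statement is attributed directly to De Concini--Procesi and Garsia--Procesi, my plan is to translate their general results on Springer cohomology into the two-row setting rather than reprove them from scratch. First I would invoke the De Concini--Procesi presentation of $H^*(\mathcal{S}_\lambda)$ as a graded quotient of a polynomial ring carrying the Springer $S_n$-action; this identifies the graded character with a well-studied combinatorial object.

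Second, I would apply Garsia--Procesi's Equation 4.2, which expresses the multiplicity of the Specht module $V_\mu$ in $H^{2m}(\mathcal{S}_\lambda)$ as the coefficient of $q^m$ in the modified Kostka--Foulkes polynomial $\widetilde{K}_{\mu,\lambda}(q)$. This reduces the problem to a purely combinatorial computation on symmetric functions.

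Third, I would specialize to $\lambda = (n-k,k)$. In the two-row case the relevant Kostka--Foulkes polynomials collapse to monomials: one shows $\widetilde{K}_{(n-m,m),(n-k,k)}(q) = q^m$ for $0 \leq m \leq k$, and $\widetilde{K}_{\mu,(n-k,k)}(q) = 0$ for every other partition $\mu$ of $n$ (either because $\mu \not\geq (n-k,k)$ in dominance order, or because $\mu$ has more than two parts and is thus incompatible with a two-row shape). This collapse follows from the cocharge statistic on two-row standard Young tableaux via the Lascoux--Sch\"utzenberger formula; in the two-row case each tableau of shape $(n-m,m)$ with content $(n-k,k)$ contributes a single monomial of degree $m$.

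Putting the three ingredients together yields $H^{2m}(X_{n-k,k}) \cong V_{(n-m,m)}$, with each graded piece a single irreducible summand. As a sanity check, $\dim V_{(n-m,m)} = \binom{n}{m} - \binom{n}{m-1}$ agrees with the count of standard dotted noncrossing matchings with exactly $m$ dotted arcs furnished by Lemma \ref{gens}, and summing over $m$ gives $\binom{n}{k}$ standard Young tableaux of shape $(n-k,k)$ as it must. The only real obstacle is the two-row Kostka--Foulkes evaluation, which is classical but needs to be cited carefully.
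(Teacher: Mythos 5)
Your proposal is correct and follows the same route as the paper, which states Proposition \ref{GP} as a direct specialization of De Concini--Procesi and Garsia--Procesi without spelling out the intermediate steps; you have simply supplied the standard Kostka--Foulkes unpacking that those citations implicitly rely on. The only small quibble is that your dimension sanity check at the end should reference Lemma \ref{biject} (the bijection between standard dotted noncrossing matchings and standard Young tableaux) rather than Lemma \ref{gens}, but the underlying count $\dim V_{(n-m,m)} = \binom{n}{m} - \binom{n}{m-1}$ and the telescoping sum to $\binom{n}{k}$ are correct.
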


Young tableaux theory is often used to study irreducible representations of the symmetric group. Recall that a Young diagram corresponding to a partition of the number $n$ is a collection of top and left aligned boxes with row lengths given by the partition. A standard Young tableau is a filling of a Young diagram with the numbers $1$ through $n$ so that numbers strictly increase from left to right and top to bottom. For more information on Young tableaux and the Specht module theory we will use in the next section we refer the reader to \cite[Chapter 7]{Ful}. 

\begin{lemma}\label{biject}
Standard noncrossing matchings of type $(n-k,k)$ with $m$ undotted arcs are in bijection with standard Young tableaux of shape $(n-m,m)$.
\end{lemma}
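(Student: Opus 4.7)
The plan is to split the bijection into two steps. Since \cite[Proposition 1.3]{SW} (already invoked in Lemma \ref{nk}) provides a classical bijection between noncrossing matchings of type $(n-m,m)$ and standard Young tableaux of shape $(n-m,m)$, it suffices to exhibit a bijection between standard dotted noncrossing matchings of type $(n-k,k)$ with $m$ undotted arcs and ordinary noncrossing matchings of type $(n-m,m)$.

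For the forward map, I would send a standard dotted noncrossing matching $M$ to the configuration $\alpha(M)$ obtained by deleting every dotted arc and promoting its two endpoints to rays. Standardness condition (1) --- no dotted arc is nested beneath any other arc --- guarantees that no arc of $M$ lies above the endpoints of a dotted arc, so replacing those endpoints with (infinitely tall) rays creates no new crossing. The output has $m$ arcs and $(n-2k)+2(k-m)=n-2m$ rays, hence is a noncrossing matching of type $(n-m,m)$.

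For the inverse, I would use the following structural observation about $M$: its outermost dotted arcs $D_1,\dots,D_{k-m}$ are pairwise non-nested and lie to the right of every ray, so their endpoints appear in the linear order
\[
L_1<R_1<L_2<R_2<\cdots<L_{k-m}<R_{k-m},
\]
and the $n-2k$ original rays all sit strictly to the left of $L_1$. In $\alpha(M)$ this means the rightmost $2(k-m)$ rays are precisely $L_1,R_1,\dots,L_{k-m},R_{k-m}$, grouped into arcs $(L_i,R_i)$ by consecutive pairing. I would therefore define $\beta$ on an arbitrary noncrossing matching $N$ of type $(n-m,m)$ by taking its rightmost $2(k-m)$ rays and pairing them consecutively (first with second, third with fourth, and so on), declaring each pair a dotted arc.

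The main obstacle is verifying that $\beta(N)$ is genuinely a \emph{standard} dotted noncrossing matching. Noncrossingness is the hinge: if an arc of $N$ enclosed one of the newly formed pairs, it would have to pass over an infinitely tall ray of $N$, which is impossible --- so the new dotted arcs fit into $N$ without crossings and are simultaneously nested beneath no other arc. Consecutive pairings are automatically side-by-side, so the new dotted arcs are mutually non-nested; and the leftover $n-2k$ rays sit strictly left of every paired ray by construction, confirming the ray condition. The identities $\alpha\circ\beta=\mathrm{id}$ and $\beta\circ\alpha=\mathrm{id}$ are then immediate from the constructions, completing the bijection.
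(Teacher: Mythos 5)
Your proof is correct, and it takes a different route than the paper's. The paper constructs a direct bijection to standard Young tableaux: it defines $g(M)$ by placing the right endpoint of each undotted arc of $M$ in the bottom row and everything else in the top row, defines an explicit inverse $h$, and cites \cite[Section 2.2]{RT} for the verification that $g$ and $h$ are mutually inverse. You instead factor the bijection through an intermediate combinatorial object: delete the dotted arcs to pass to an ordinary noncrossing matching of type $(n-m,m)$, then invoke the already-cited bijection of \cite[Proposition 1.3]{SW} to reach tableaux. This is more modular — the tableau side is delegated entirely to a known result, and the new content is the matching-to-matching bijection $\alpha$, whose well-definedness and invertibility you verify carefully (noncrossingness of $\alpha(M)$, recoverability of the dotted arcs as the consecutively-paired rightmost $2(k-m)$ rays, standardness of $\beta(N)$). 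Since the SW bijection also sends right endpoints of arcs to the bottom row, the composite map you build is in fact the same map as the paper's $g$; the difference is purely one of organization. Your route has the advantage of reusing a citation already present in the paper rather than appealing to \cite{RT} for the verification, at the cost of introducing a second bijection to check; either way the argument is sound.
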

\begin{proof}
We give an explicit bijection between standard dotted noncrossing matchings and standard Young tableaux.

Let $M$ be a standard dotted noncrossing matching. Define $g(M)$ to be the tableau obtained by writing the right hand endpoint of each undotted arc of $M$ on the bottom row and all other numbers on the top. The tableau $g(M)$ is standard since for each number in the bottom row, the left endpoint of that arc is written in the top row. The tableau has shape $(n-m,m)$ where $m$ is the number of undotted arcs in $M$.

Now let $T$ be a standard tableau of shape $(n-m,m)$. We define $h(T)$ to be the standard dotted noncrossing matching with $n-2k$ rays and $m$ undotted arcs constructed as follows. Start with the smallest number in the bottom row, draw an undotted arc with right endpoint incident on that number and left endpoint incident on the nearest unoccupied vertex. Repeat this for all remaining numbers in the bottom row.  Then in the leftmost $n-2k$ empty positions place $n-2k$ rays. Fill the remaining empty positions with unnested dotted arcs.  Figure \ref{tabmat} has an example of this map.

The proof that $g\circ h$ and $h \circ g$ produce the identity on the sets of standard tableaux and standard dotted noncrossing matchings respectively is a straightforward generalization of the one given in \cite[Section 2.2]{RT}.
\end{proof}

\begin{figure}[h]
$\begin{tabular}{|c|c|c|c|c|}
\cline{1-5} 1 & 2 & 4 & 5 & 7 \\
\cline{1-5} 3 & 6 & \multicolumn{2}{c}{} \\
\cline{1-2} \multicolumn{4}{c}{} \vspace{-1.3em} \end{tabular} \hspace{.5in} \stackrel{h}{\longrightarrow} \hspace{.5in}$
\begin{picture}(100,20)(0,5)
\put(0,0){\line(0,1){20}}
\qbezier(10,0)(10,10)(20,10)
\qbezier(20,10)(30,10)(30,0)
\qbezier(40,0)(40,20)(60,20)
\qbezier(60,20)(80,20)(80,0)
\qbezier(50,0)(50,10)(60,10)
\qbezier(60,10)(70,10)(70,0)
\put(0,10){\circle*{5}}
\put(60,20){\circle*{5}}
\end{picture}

\caption{A standard tableau of shape $(5,2)$ and its associated standard dotted noncrossing matching of type $(4,3)$.}\label{tabmat}
\end{figure}
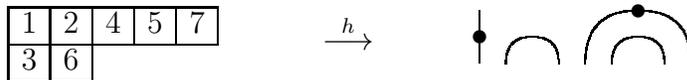

\begin{corollary}
The standard noncrossing matchings on $n$ vertices with $n-2k$ rays and $m$ undotted arcs form a basis for 
$H_{2m}(X_{n-k,k})$.
\end{corollary}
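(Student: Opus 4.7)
The plan is a dimension count: combine the three preceding results to conclude that the proposed generating set is actually a basis in each fixed degree. First I would verify that a standard dotted noncrossing matching $M$ with $m$ undotted arcs represents a class in $H_{2m}(X_{n-k,k})$. This is immediate from the cartesian product cell decomposition of $S_{\bf a}$ in Definition \ref{diagdec}: an undotted arc contributes an $S^2$ factor (a $2$-cell), while a dotted arc or a dotted ray contributes the fixed point $p$ (a $0$-cell), so the cell associated to $M$ has real dimension $2m$.

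Next, Lemma \ref{gens} says that standard dotted noncrossing matchings span $H_*(X_{n-k,k})$, and by the degree observation above, those with exactly $m$ undotted arcs span $H_{2m}(X_{n-k,k})$. Thus I get an upper bound
\[
\dim H_{2m}(X_{n-k,k}) \leq \#\{\text{standard dotted noncrossing matchings of type $(n-k,k)$ with $m$ undotted arcs}\}.
\]
Lemma \ref{biject} identifies the right-hand side with the number of standard Young tableaux of shape $(n-m,m)$.

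Finally, Proposition \ref{GP} identifies $H^{2m}(X_{n-k,k})$ with the irreducible Specht module for the partition $(n-m,m)$, whose dimension is exactly the number of standard Young tableaux of shape $(n-m,m)$ by the standard Young tableaux basis theorem (see \cite[Chapter 7]{Ful}). Since $H_{2m}$ and $H^{2m}$ have the same rank over a field, the upper bound above is attained, so the generating set must be linearly independent and hence a basis.

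The whole argument is essentially a counting triangle, so there is no serious obstacle. The only point that requires a bit of care is making sure the bijection of Lemma \ref{biject} really does match the homological grading: one has to check that ``number of undotted arcs'' on the matching side corresponds to the geometric dimension of the cell, which is why I treat the degree verification as the first step of the proof.
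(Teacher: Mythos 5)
Your proof is correct and follows essentially the same dimension-count argument as the paper: Lemma~\ref{gens} gives spanning, Lemma~\ref{biject} counts the spanning set, and Proposition~\ref{GP} together with the standard Young tableaux dimension formula pins the count down to force linear independence. The extra care you take in matching the number of undotted arcs to the homological degree $2m$ is a worthwhile explicit check, but it does not change the route.
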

\begin{proof}
By Proposition \ref{GP} the dimension of $H_{2m}(X_{n-k,k})$ is the same as the dimension of the irreducible representation of $S_n$ corresponding to the partition $(n-m,m)$. Classical representation theory tells us that this dimension is equal to the number of standard Young tableaux of shape $(n-m,m)$ \cite{Ful}.

By Lemma \ref{gens} the set of standard dotted noncrossing matchings with $m$ undotted arcs is a generating set for $H_{2m}(X_{n-k,k})$. Lemma \ref{biject} tells us that this generating set has the proper dimension and therefore is a basis.
\end{proof}

\section{Springer action on $H_*(X_{n-k,k})$}\label{action}

Consider $n$ and $0\leq k\leq \lfloor n/2 \rfloor$. In a previous work we define a graded action of the symmetric group $S_{2n-2k}$ on $H_*(X_{n-k,n-k})$ \cite{RT}. We prove that this action which can be combinatorially and diagrammatically described is the Springer action. In other words $H_{2m}(X_{n-k,n-k})$ is the irreducible representation of $S_{2n-2k}$ corresponding to the partition $(2n-2k-m, m)$ for each $0\leq m\leq n-k$. This action is summarized by the chart in Figure \ref{snact}.

Using the same approach as \cite[Section 4]{RT} we construct the Springer action on all two-row Springer varieties. We once again describe this action diagrammatically. Finally we give a convenient skein-theoretic method for calculating the action.
\begin{figure}
\scalebox{.75}{
\begin{tabular}{|c||c|c|}
\cline{1-3} & &   \hspace{1em} \\
 {\bf Vertex Labelings} & $M$ & $(i~~~~ i+1)\cdot M$  \\
& &   \hspace{1em} \\
 \cline{1-3} & &  \hspace{1em} \\
 {\bf Case 1} &\includegraphics[width=.5in]{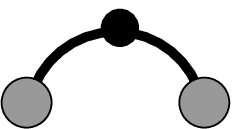}& \includegraphics[width=.5in]{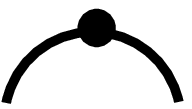}\\
 \cline{2-3} & & \hspace{1em} \\
 \raisebox{7pt}{\small{$i,i+1$ both on dotted arcs}}&  \includegraphics[width=1in]{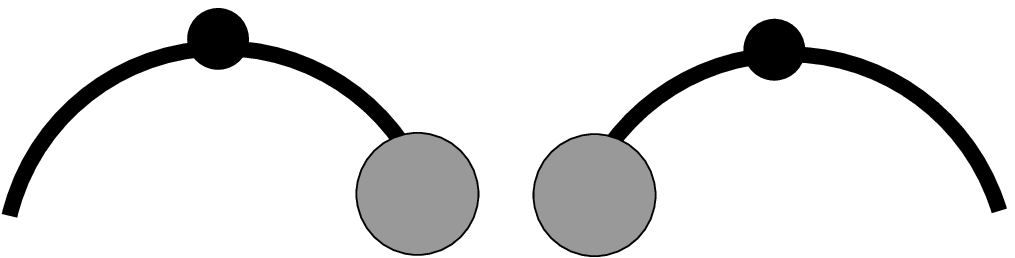} &  \includegraphics[width=1in]{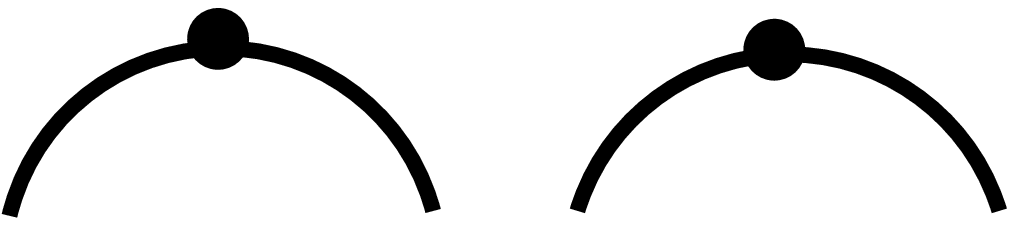} \\
 \cline{1-3} & & \hspace{1em}\\
 {\bf  Case 2} & \includegraphics[width=.5in]{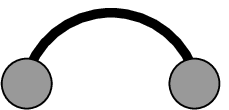} &  $-$ \includegraphics[width=.5in]{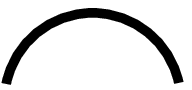} \\
 \raisebox{7pt}{\small{$(i,i+1)$ is an undotted arc}} & & \\
 \cline{1-3} & & \hspace{1em}\\
  {\bf  Case 3}  &\includegraphics[width=1in]{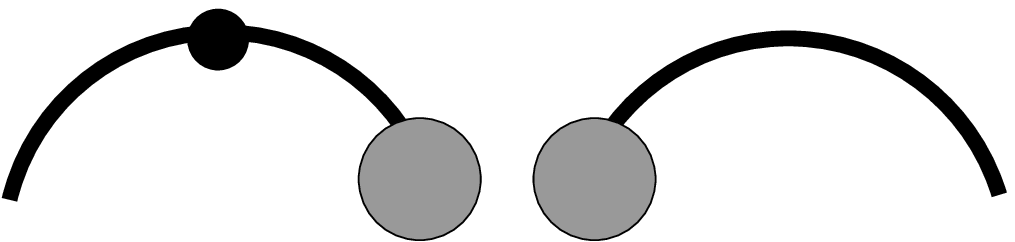} & $\includegraphics[width=1in]{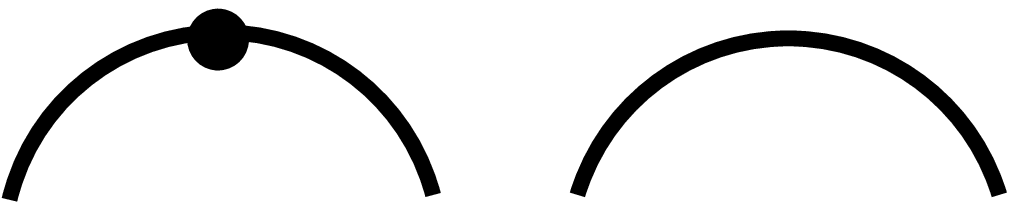} + \includegraphics[width=1in]{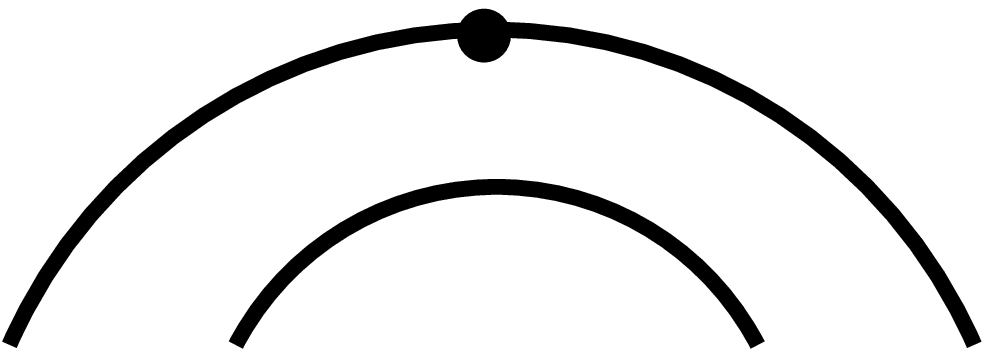}$\\
\cline{2-3} & & \hspace{1em}\\
 \small{$(j,i)$ and $(i+1,k)$ have one dot}& \includegraphics[width=1in]{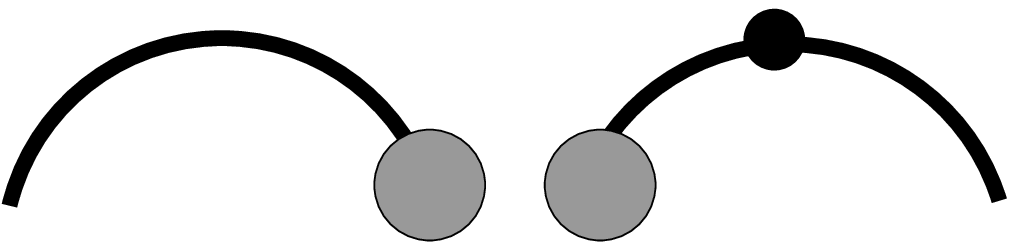} & $\includegraphics[width=1in]{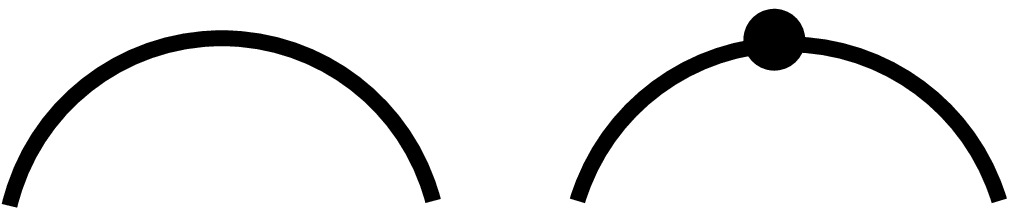} + \includegraphics[width=1in]{nest2.eps}$\\
 \cline{2-3} & & \hspace{1em}\\
&   \includegraphics[width=1in]{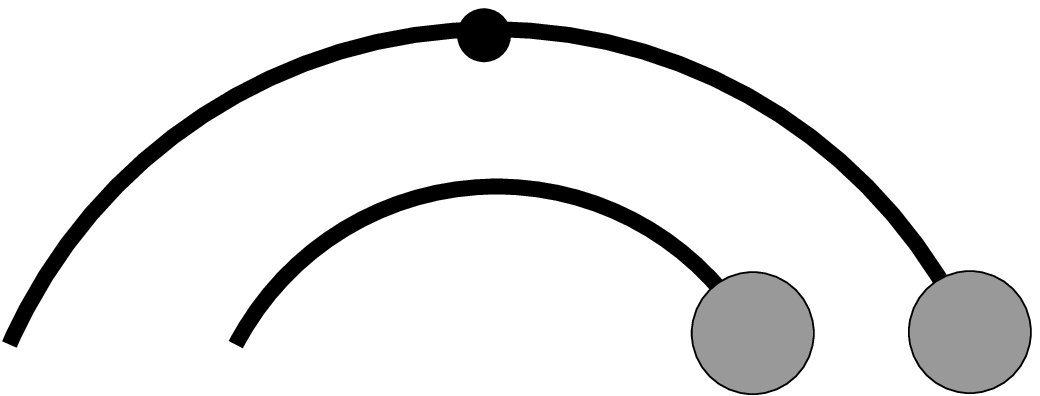} & $\includegraphics[width=1in]{nest2.eps} + \includegraphics[width=1in]{unnest1.eps}$\\
\cline{2-3} & & \hspace{1em}\\
 & \includegraphics[width=1in]{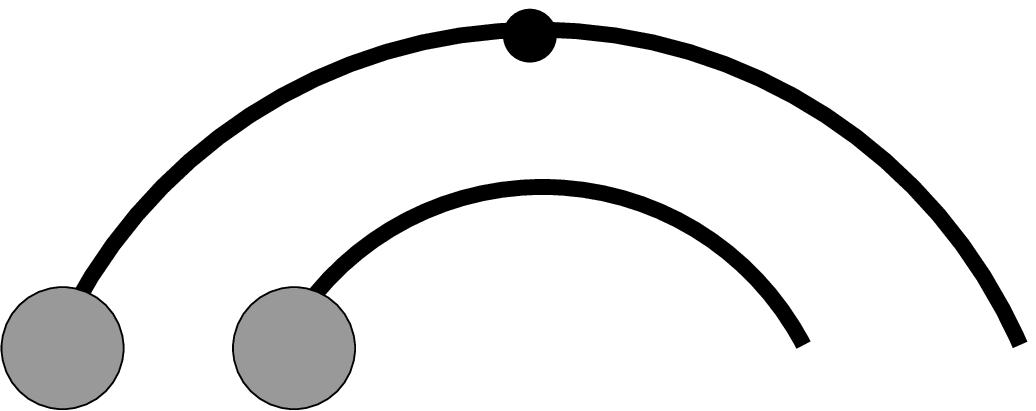} & $\includegraphics[width=1in]{nest2.eps} + \includegraphics[width=1in]{unnest2.eps}$\\
 \cline{1-3} & & \hspace{1em} \\
 {\bf  Case 4} & \hspace{.2in} \includegraphics[width=1in]{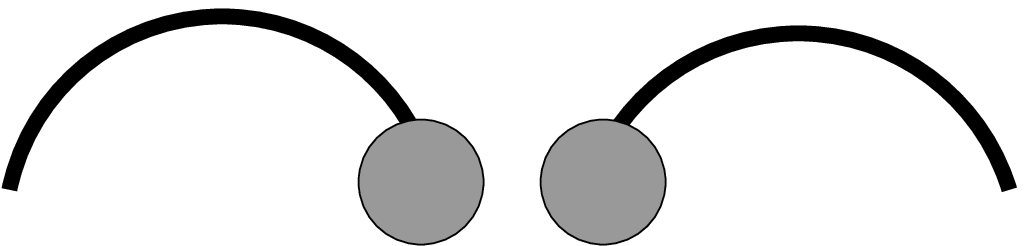}  \hspace{.2in}&  \hspace{.2in} $\includegraphics[width=1in]{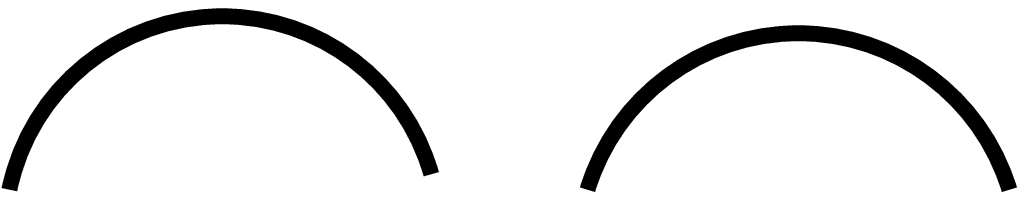} + \includegraphics[width=1in]{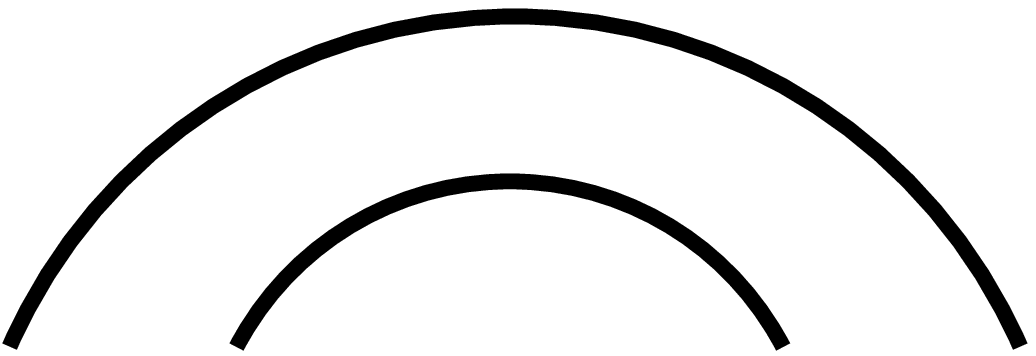}$  \hspace{.2in} \\
 \cline{2-3} & & \hspace{1em}\\
\raisebox{7pt}{\small{$(j,i)$ and $(i+1,k)$ have no dots}} & \includegraphics[width=1in]{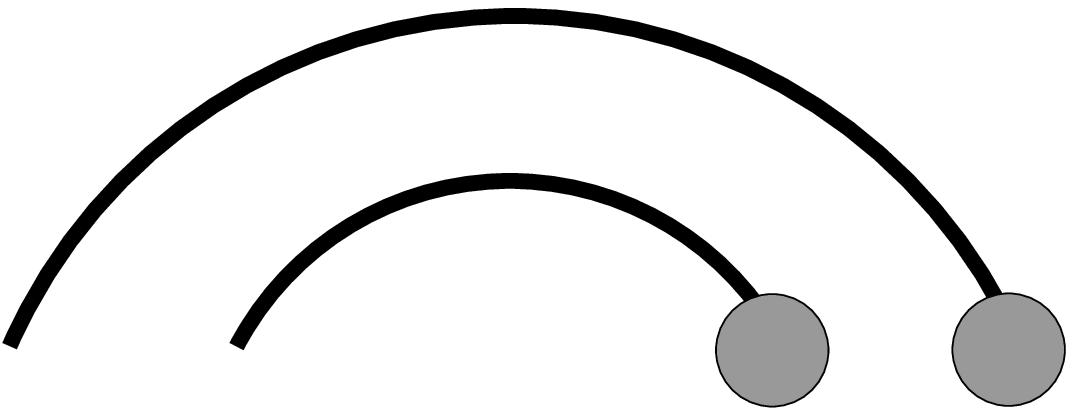} &$\includegraphics[width=1in]{nest.eps} + \includegraphics[width=1in]{revunnest0.eps}$ \\
\cline{2-3} & & \hspace{1em}\\
\raisebox{7pt}{} & \includegraphics[width=1in]{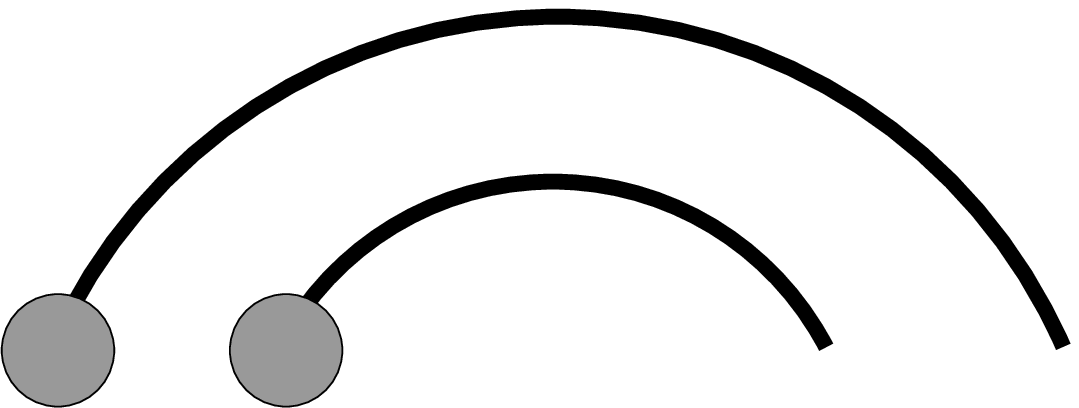} &$\includegraphics[width=1in]{nest.eps} + \includegraphics[width=1in]{revunnest0.eps}$ \\
 \hline
 \end{tabular}}
\caption{The $S_n$ - action on standard dotted noncrossing matchings. (Gray vertices indicate positions $i$ and $i+1$.)}\label{snact}
\end{figure}

\subsection{Defining a symmetric group action}

Define the map $\eta: X_{n-k,k} \rightarrow X_{n-k,n-k}$ to be $$\eta(x_1, \ldots, x_n) = (\pm p, \mp p, \ldots, p, -p, x_1, \ldots, x_n).$$ Recall that the completion map $\varphi$ from Section \ref{int} builds a matching of type $(n-k,n-k)$ from one of type $(n-k,k)$ by anchoring rays to the left of the matching. Given some noncrossing matching ${\bf a}\in B^{n-k,k}$ it is an immediate consequence of the definition of $\eta$ that $\eta (S_{\bf a}) \subset S_{\varphi({\bf a})}$.  In other words $\eta$ maps the component associated to ${\bf a}$ into the component associated to its completion. 

Recall the antipodal map $\gamma$ defined in the proof of Theorem \ref{homeopf} as $\gamma: X_{n-k,k} \rightarrow (S^2)^{n}$ by $$\gamma(x_1, \ldots, x_{n}) = (-x_1, x_2, \ldots, (-1)^n x_{n}).$$ Finally consider the map $\iota_{n}: (S^2)^n \rightarrow (S^2)^{2n-2k}$ given by $$\iota_n(x_1, \ldots, x_n) = ((-1)^{n-1}p, \ldots, (-1)^{n-1}p, (-1)^{n}x_1, \ldots, (-1)^{n}x_n).$$

\begin{lemma}\label{commute}
The following diagram is commutative.
 \[\begin{array}{rcl}
 H_*(X_{n-k,k}) & \stackrel{\eta_*}{\rightarrow} & H_*(X_{n-k,n-k}) \\
\textup{\tiny{$\gamma_*$}} \downarrow & & \downarrow {\textup{\tiny{$\gamma_*$}}} \\
H_*((S^2)^n) &  \stackrel{{\iota_{n}}_*}{\rightarrow} & H_*((S^2)^{2n-2k})\end{array}\]
\end{lemma}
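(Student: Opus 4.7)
The plan is to verify that the square actually commutes already at the level of continuous maps, not merely up to homotopy. Once $\iota_n\circ\gamma = \gamma\circ\eta$ as functions $X_{n-k,k}\to (S^2)^{2n-2k}$, the induced diagram on homology commutes automatically by functoriality of $H_*$. So the whole proof reduces to a careful sign-tracking exercise at the point-set level.

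First I would make explicit the prepended pattern in $\eta$. Since the string ``$\pm p,\mp p,\ldots,p,-p$'' has $n-2k$ terms and ends with $p,-p$, the $j$-th prepended coordinate is $(-1)^{n-2k-j+1}p$ for $1\le j\le n-2k$. Now compute $\gamma\circ\eta$ on a point $(x_1,\ldots,x_n)\in X_{n-k,k}$: at position $j\le n-2k$, $\gamma$ multiplies by $(-1)^j$, turning $(-1)^{n-2k-j+1}p$ into $(-1)^{n-2k+1}p=(-1)^{n-1}p$, which is independent of $j$. At position $n-2k+i$ (with $1\le i\le n$), $\gamma$ multiplies $x_i$ by $(-1)^{n-2k+i}=(-1)^{n+i}$. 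Hence
\[
(\gamma\circ\eta)(x_1,\ldots,x_n)=\bigl((-1)^{n-1}p,\ldots,(-1)^{n-1}p,(-1)^{n+1}x_1,(-1)^{n+2}x_2,\ldots,(-1)^{2n}x_n\bigr).
\]

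Next I would compute the other composition. Applying $\gamma$ to $(x_1,\ldots,x_n)$ gives $((-1)^1x_1,\ldots,(-1)^nx_n)$; then $\iota_n$ prepends $n-2k$ copies of $(-1)^{n-1}p$ and multiplies each remaining entry by $(-1)^n$, producing exactly
\[
(\iota_n\circ\gamma)(x_1,\ldots,x_n)=\bigl((-1)^{n-1}p,\ldots,(-1)^{n-1}p,(-1)^{n+1}x_1,(-1)^{n+2}x_2,\ldots,(-1)^{2n}x_n\bigr).
\]
The two expressions agree coordinate by coordinate, so $\iota_n\circ\gamma=\gamma\circ\eta$ as continuous maps and the induced square on $H_*$ commutes.

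The only genuine obstacle here is bookkeeping of the three intertwined alternating signs coming from $\eta$, $\gamma$, and $\iota_n$. It is useful to notice the small parity lemma driving the result: since $n-2k-j+1$ and $j$ have opposite parity modulo $2$, the product of the signs $(-1)^{n-2k-j+1}$ (from $\eta$) and $(-1)^j$ (from the outer $\gamma$) simplifies to a constant $(-1)^{n-1}$, which is exactly what $\iota_n$ inserts; everything else is automatic. No deeper input (no Mayer--Vietoris, no use of Theorem \ref{sequence}) is needed for this lemma — it is purely a compatibility check between the completion embedding $\eta$ and the antipodal shift $\gamma$, which is why the statement is packaged as a lemma rather than a theorem.
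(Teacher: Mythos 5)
Your proof is correct and takes essentially the same approach as the paper: both verify the equality $\iota_n\circ\gamma = \gamma\circ\eta$ at the level of continuous maps by direct sign bookkeeping, then appeal to functoriality, with the same key observation that $n$ and $n-2k$ have the same parity so $(-1)^{n-2k+i}=(-1)^{n+i}$.
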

\begin{proof} 
On one hand we have
\begin{eqnarray*}
\iota_{n} \circ \gamma (x_1, \ldots, x_n) &=& \iota_{n} (-x_1, x_2, \ldots, (-1)^{n-1}x_{n-1}, (-1)^n x_n) \\
&=& ((-1)^{n-1}p, \ldots, (-1)^{n-1}p, \\
&& \hspace{.5in} (-1)^{n+1}x_1, (-1)^{n+2}x_2, \ldots, (-1)^{2n-1}x_{n-1}, (-1)^{2n}x_n).
\end{eqnarray*}
And on the other hand
 \begin{eqnarray*}
 \gamma \circ \eta (x_1, \ldots, x_n) &=& \gamma ((-1)^{n-2k}p, (-1)^{n-2k-1} p, \ldots, (-1)^2p, (-1)^1p, x_1, \ldots, x_n) \\
 &=&  ((-1)^{n-2k+1}p, \ldots,(-1)^{n-2k+1}p, \\
 && \hspace{.5in}  (-1)^{n-2k+1}x_1, \ldots, (-1)^{2n- 2k-1}x_{n-1}, (-1)^{2n-2k}x_n).
 \end{eqnarray*}
 The parity of $n$ and $n-2k$ must be the same. Therefore $(-1)^{n-2k+i} = (-1)^{n+i}$ and the diagram commutes.
\end{proof}

\begin{proposition} \label{stabact}
The map $\gamma_*: H_*(X_{n-k,k}) \rightarrow H_*((S^2)^n)$ is injective, and the action of $S_n$ on $H_*((S^2)^n)$ given by permutation of coordinates stabilizes the image of $H_*(X_{n-k,k})$ under $\gamma_*$.
\end{proposition}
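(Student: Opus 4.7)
The plan is to bootstrap both assertions from the balanced $(n-k, n-k)$ case of this proposition, established in \cite{RT}, using the commutative square of Lemma \ref{commute}. Two auxiliary facts drive the reduction. First, the inclusion $\iota_n \colon (S^2)^n \hookrightarrow (S^2)^{2n-2k}$ pins the first $n-2k$ coordinates to a specific basepoint, and so induces an injective map $\iota_{n*}$ on homology by the K\"unneth theorem (it identifies $H_*((S^2)^n)$ with the K\"unneth summand obtained by pairing with the point class in each of the first $n-2k$ factors). Second, $\iota_n$ is equivariant with respect to the embedding $S_n \hookrightarrow S_{2n-2k}$ as the subgroup fixing the first $n-2k$ letters.

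For injectivity of $\gamma_*$, I would first prove that $\eta_* \colon H_*(X_{n-k,k}) \to H_*(X_{n-k, n-k})$ is injective and then chain: if $\gamma_*(y) = 0$, then by Lemma \ref{commute} and balanced-case injectivity, $\gamma_*(\eta_*(y)) = \iota_{n*}(\gamma_*(y)) = 0$ forces $\eta_*(y) = 0$, whence $y = 0$. To prove $\eta_*$ is injective I would analyze it on the cartesian-product cell decomposition of Definition \ref{diagdec}. Since $\eta$ sends each component $S_{\bf a}$ into $S_{\varphi({\bf a})}$ by pinning the first $n-2k$ coordinates to the alternating pattern $(-1)^{n-2k}p, \ldots, -p$, a diagrammatic basis element $M$ for $S_{\bf a}$ is sent to the class represented in $S_{\varphi({\bf a})}$ by the completed matching in which the $n-2k$ outermost arcs (coming from completed rays of ${\bf a}$) are all \emph{un}dotted, while the dot pattern on the inner arcs of ${\bf a}$ is retained (the value $\pm p$ represents the same class as $p$ in $H_0(S^2)$). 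After reducing to the standard basis via the Type I, II, and III relations, and using the bijection $\varphi \colon B^{n-k,k} \to \widetilde B^{n-k,n-k}$, distinct $M$ produce linearly independent classes, which gives the injectivity.

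For stability, fix $y \in H_*(X_{n-k,k})$ and $\sigma \in S_n$, and let $\tilde\sigma \in S_{2n-2k}$ be its image under the embedding above; equivariance of $\iota_n$ and Lemma \ref{commute} give
\[\iota_{n*}(\sigma \cdot \gamma_*(y)) \;=\; \tilde\sigma \cdot \gamma_*(\eta_*(y)).\]
The balanced-case stability from \cite{RT} puts the right-hand side in $\gamma_*(H_*(X_{n-k, n-k}))$, say equal to $\gamma_*(z)$. The main obstacle is to show that $z$ actually lies in $\eta_*(H_*(X_{n-k,k}))$; once this is known, injectivity of $\iota_{n*}$ forces $\sigma \cdot \gamma_*(y) \in \gamma_*(H_*(X_{n-k,k}))$. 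I would overcome this obstacle by giving an intrinsic description of $\eta_*(H_*(X_{n-k,k}))$ as the span of standard diagrammatic classes supported on completions $\varphi({\bf a})$ carrying the characteristic dot pattern inherited from the $(n-k,k)$-type basis, and then verifying directly, case by case from Figure \ref{snact}, that each simple transposition $(i, i+1)$ with $n - 2k + 1 \le i \le 2n - 2k - 1$ preserves this subspace. Since these transpositions generate the relevant $S_n$-subgroup of $S_{2n-2k}$, stability follows.
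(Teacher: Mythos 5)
Your proposal is essentially the paper's own argument: reduce both claims to the balanced $(n-k,n-k)$ case proved in \cite{RT}, using injectivity of $\eta_*$ and $\iota_{n*}$, the commutative square of Lemma~\ref{commute}, the equivariance of $\iota_n$ with respect to the subgroup $G\le S_{2n-2k}$ fixing the first $n-2k$ letters, and a diagrammatic characterization of $\eta_*(H_*(X_{n-k,k}))$ inside $H_*(X_{n-k,n-k})$ that is checked to be $G$-stable case by case from Figure~\ref{snact}. The chaining argument for injectivity and the reduction of stability to stability of $\eta_*$'s image are exactly what the paper does.

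One word of caution on a detail that is not your fault: you describe the $n-2k$ new arcs in $\varphi({\bf a})$ as carrying the basepoint pattern and hence being \emph{undotted}, which is correct if one takes Definition~\ref{diagdec} at face value (there $e\in I$, i.e., the dotted arcs, are the $2$-cell directions with $x_{e_\ell}\neq p$, so pinned coordinates ought to be undotted). But the paper's proof of this proposition, as well as Lemma~\ref{biject} and the corollary following it, actually use the opposite convention (degree $2m$ classes have $m$ undotted arcs, and $\eta_*$ produces new \emph{dotted} arcs), so Definition~\ref{diagdec} appears to have the dotted/undotted roles reversed relative to the rest of Section~4. Your parenthetical remark that the pinned values land in $H_0(S^2)$ shows you have the topology right; just be aware that when you carry out the Figure~\ref{snact} case check you must use the same dot-convention as the chart, or the characterization of $\eta_*$'s image will not match the rows you inspect.
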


\begin{proof}

Viewing the maps $\eta$ and $\iota_{n}$ on diagrammatic homology generators it is straightforward to see that $\eta_*$ and ${\iota_{n}}_*$ are injective. In \cite[Corollary 3.13]{RT} we prove that $\gamma_*:H_*(X_{n-k,n-k}) \rightarrow H_*((S^2)^{2n-2k})$ is injective. Therefore by Lemma \ref{commute} we see that $\gamma_*: H_*(X_{n-k,k}) \rightarrow H_*((S^2)^n)$ is injective and the first half of the lemma is proven.

Consider the subgroup $G\leq S_{2n-2k}$ fixing the numbers $1, \ldots, n-2k$. Then $G$ is isomorphic to $S_n$ via the map $(i \hspace{.1in} j) \mapsto (i+n-2k \hspace{.1in} j+n-2k)$. The action of $G$ on $H_*(X_{n-k,n-k})$ is given by pulling back the action of $G$ on $H_*((S^2)^{2n-2k})$. Since the homology of a cartesian product of copies of $S^2$ is a tensor product of copies of $H_*(S^2)$, the symmetric group action that permutes tensor factors commutes with the inclusion map ${\iota_n}_*$.  We can therefore prove the second part of the lemma by showing that the action of $G$ on $H_*(X_{n-k,n-k})$ stabilizes the image of $H_*(X_{n-k,k})$ under $\eta_*$.

Again note that given ${\bf a}\in B^{n-k,k}$ we have $\eta (S_{\bf a}) \subset S_{\varphi({\bf a})}$. On the level of homology $\eta$ sends generators in $H_*(X_{n-k,k})$ to their associated completions in $H_*(X_{n-k,n-k})$. The new dotted arcs in the completed generators will have right endpoint at the vertex where the ray was formerly incident and left endpoint on one of the first $n-2k$ vertices. While this new dotted noncrossing matching is not necessarily standard, it is equivalent to a unique standard noncrossing matching via Type II relations. 

Diagrammatically, the image $\eta_*(H_*(X_{n-k,k}))$ is exactly those generators in $H_*(X_{n-k,n-k})$ with no undotted arcs incident on the first $n-2k$ vertices. Indeed the preimage of such a generator under $\eta_*$ is the standard dotted noncrossing matching with the first $n-2k$ vertices removed,  identical undotted arcs, and the unique standard arrangement of rays and dotted arcs in the remaining positions.

In order to prove that the subgroup action of $G$ stabilizes $\eta_*(H_*(X_{n-k,k}))$ we need to show that each simple transposition $(i\hspace{.1in} i+1)$ where $n-2k+1\leq i<2n-2k$ maps elements of  $\eta_*(H_*(X_{n-k,k}))$ to a linear combination of elements of $H_*(X_{n-k,n-k})$ with no undotted arcs incident on the first $n-2k$ vertices.  Examining rows 1, 2, 4, and 7 on the chart in Figure \ref{snact} we see that it is not possible for such a simple transposition to map a generator without an undotted arc incident on the first $n-2k$ vertices to one with an undotted arc incident on the first $n-2k$ vertices.
\end{proof}

\begin{theorem}
There is a graded $S_n$ - action on $H_*(X_{n-k,k})$ described by the chart in Figure \ref{snact} together with the chart in Figure \ref{rayact}.
\end{theorem}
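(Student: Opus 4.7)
The plan is to define the $S_n$-action on $H_*(X_{n-k,k})$ by pulling back the $S_{2n-2k}$-action on $H_*(X_{n-k,n-k})$ through the completion map $\eta$, and then to verify case by case that this pullback reproduces the rules in Figures \ref{snact} and \ref{rayact} on diagrammatic generators. Concretely, for a simple transposition $\sigma_i = (i,i+1) \in S_n$, I would define $\sigma_i \cdot M$ by identifying $\sigma_i$ with the transposition $(i+n-2k, i+1+n-2k) \in G \leq S_{2n-2k}$, computing the action on the completed generator $\eta_*(M) \in H_*(X_{n-k,n-k})$ using the chart from \cite{RT}, and then taking the preimage under $\eta_*$.

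First I would argue this is well-defined and graded. Proposition \ref{stabact} guarantees that $\eta_*$ is injective and that its image is stabilized by $G$, so the preimage step produces a unique class in $H_*(X_{n-k,k})$. The gradedness follows immediately from the gradedness of the $S_{2n-2k}$-action on $H_*(X_{n-k,n-k})$ established in \cite{RT}, since $\eta_*$ sends cells to cells of equal real dimension (the new dotted arcs in the completion are all zero-dimensional, and the ambient grading is unchanged). The relations (braid relations, involution) are inherited from the $S_{2n-2k}$-action.

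The bulk of the argument is the verification that this pullback agrees with Figures \ref{snact} and \ref{rayact} on standard dotted noncrossing matchings. For cases where neither position $i$ nor $i+1$ is incident on a ray in $M$, the local picture in $\eta_*(M)$ is identical to the local picture in $M$ at positions $i+n-2k$ and $i+1+n-2k$, and the first $n-2k$ vertices of $\eta_*(M)$ are not disturbed by the action of $(i+n-2k, i+1+n-2k)$; each row of Figure \ref{snact} therefore pulls back verbatim. The new content is the cases where at least one of $i$, $i+1$ is a ray. For these I would take the completion, which replaces each ray $(j)$ by a dotted arc of the form $(n-2k-t+1, j+n-2k)$, apply the appropriate row of the $(n-k,n-k)$ chart, and then rewrite the resulting linear combination in terms of standard dotted noncrossing matchings using Type II and Type III relations so that the inverse $\eta_*^{-1}$ can be read off. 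Each such computation yields exactly one row of Figure \ref{rayact}, and running through the (finitely many) local configurations of arcs and rays at $i$, $i+1$ finishes the proof.

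The main obstacle will be the bookkeeping in this last step: when a transposition swaps a ray with an arc endpoint, the completed picture interacts nontrivially with the anchoring arcs on the first $n-2k$ vertices, and the chart from \cite{RT} can produce terms in which an undotted arc threatens to land on one of those auxiliary vertices. Proposition \ref{stabact} rules this out in principle, but verifying it concretely for each ray/arc configuration, and then normalizing the output back to a standard dotted noncrossing matching via Type II and Type III relations before applying $\eta_*^{-1}$, is the step that requires the most care. Once this is done for every local configuration the theorem follows.
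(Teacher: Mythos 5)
Your approach is essentially the one the paper takes: the paper defines $(i\ i{+}1)\cdot M := \gamma_*^{-1}((i\ i{+}1)\cdot \gamma_*(M))$ and cites Proposition~\ref{stabact} for well-definedness and Lemma~\ref{commute} plus Figure~\ref{snact} for the computation, which (by the commutativity of the square in Lemma~\ref{commute}) is equivalent to your pullback through $\eta_*$ and the $(n-k,n-k)$ chart. The only cosmetic difference is that you phrase the definition via $\eta_*$ rather than via $\gamma_*$ directly, but Lemma~\ref{commute} makes these interchangeable, and your bookkeeping remarks about normalizing with Type~II and Type~III relations match what is implicitly needed to produce Figure~\ref{rayact}.
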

\begin{proof}
Using the commutative diagram in Lemma \ref{commute} together with the chart in Figure \ref{snact} we are able to calculate $\gamma_*^{-1}((i \hspace{.2in} i+1)\cdot \gamma_*(M))$ where $M\in H_*(X_{n-k,k})$. By Proposition \ref{stabact} we have a well-defined $S_n$ - action given by  $(i \hspace{.2in} i+1)\cdot M := \gamma_*^{-1}((i \hspace{.2in} i+1)\cdot \gamma_*(M)) $.
\end{proof}

\begin{figure}[h]
\scalebox{.75}{
\begin{tabular}{|c||c|c|}
\cline{1-3} & &   \hspace{1em} \\
 {\bf Vertex Labelings} & $M$ & $(i~~~~ i+1)\cdot M$  \\
 & &   \hspace{1em} \\
 \cline{1-3} & &  \hspace{1em} \\
 \raisebox{10pt}{{\bf Case 5}} &\includegraphics[width=.3in]{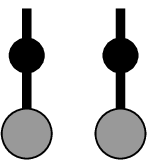}& \includegraphics[width=.3in]{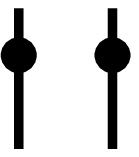}\\
 \raisebox{7pt}{\small{$i,i+1$ both on rays}}&  &  \\
 \cline{1-3} & & \hspace{1em}\\
 \raisebox{10pt}{{\bf  Case 6}} & \includegraphics[width=.8in]{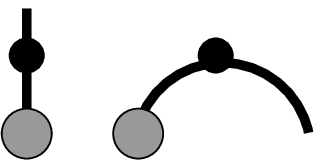} &   \includegraphics[width=.8in]{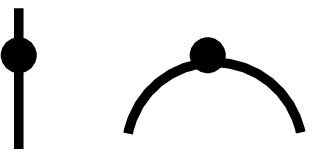} \\
 \raisebox{7pt}{\small{$i,i+1$ on ray and dotted arc}} & & \\
 \cline{1-3} & & \hspace{1em}\\
  {\bf  Case 7}  &\includegraphics[width=.8in]{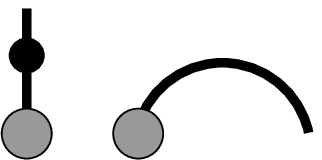} & $\includegraphics[width=.8in]{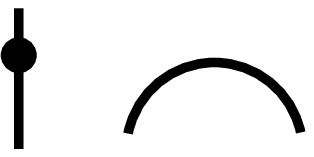} \hspace{.2in} \raisebox{10pt}{$+$} \hspace{.2in}\includegraphics[width=.8in]{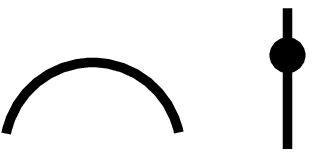}$\\
\cline{2-3} & & \hspace{1em}\\
 \small{$i, i+1$ on ray and undotted arc}& \includegraphics[width=.8in]{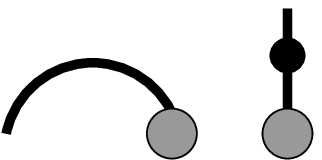} & $\includegraphics[width=.8in]{c72.eps} \hspace{.2in} \raisebox{10pt}{$+$} \hspace{.2in} \includegraphics[width=.8in]{c71.eps}$\\
 & & \hspace{1em}\\
 \hline
 \end{tabular}}
\caption{The $S_n$ - action on matchings of type $(n-k,k)$ with rays. (Gray vertices indicate positions $i$ and $i+1$.)}\label{rayact}
\end{figure}

\subsection{Verifying the Springer action}

Young tabloids are equivalence classes of Young tableaux where the equivalence relation is permutation of numbers within rows. The symmetric group $S_n$ acts on tabloids; indeed given $\sigma \in S_n$ and a tabloid $T$ we let $\sigma\cdot T$ be the tabloid with $\sigma(i)$ replacing $i$ for all $i = 1, \ldots, n$.  Consider the vector space $U_{\lambda}$ of complex linear combinations of tabloids of shape $\lambda$. For a tabloid $T$ write its corresponding element in $U_{\lambda}$ as $v_T$.  Then the action of $S_n$ on tabloids induces an action on $U_{\lambda}$.

Consider $0\leq m\leq \lfloor {n/2} \rfloor$ and $\lambda = (n-m, m)$ a partition of the number $n$. 
For each tableau $T$ let $\textup{Col}(T)$ be the subgroup of permutations in $S_n$ that stabilize all columns of $T$. For each tableau $T$ define the following vector in $U_{n-m,m}$.
$$e_{T} = \sum_{\sigma\in \textup{Col}(T)} \textup{sign}(\sigma) v_{\sigma \cdot T}$$
The subspace of $U_{n-m,m}$ generated by these vectors which we denote $V_{n-m,m}$ is known as the Specht module. A classical result says that $V_{n-m,m}$ is the irreducible representation of $S_n$ corresponding to the partition $(n-m,m)$. Furthermore $V_{n-m,m}$ has a basis given by $\{ e_T\}$ where $T$ are standard tableaux.  

Given a standard dotted noncrossing matching $M$ of type $(n-k,k)$ with $m$ undotted arcs let $\textup{Undot}(M)$ be the subgroup of $S_n$ generated by all transpositions $(i \hspace{.1in} j)$ where $(i,j)$ is an undotted arc in $M$. Let $e_M$ be the vector in $U_{n-m,m}$ defined as
$$e_M = \sum_{\sigma\in \textup{Undot}(T)} \textup{sign}(\sigma) v_{\sigma \cdot h(M)}.$$ We call the subspace of $U_{n-m,m}$ generated by the vectors $e_M$ the matching module. We denote the matching module by $W_{n-m,m}$. Define a map $\zeta: H_{2m}(X_{n-k,k}) \rightarrow U_{n-m,m}$ by $\zeta(M) = e_M$. Then the image of $\zeta$ is exactly the matching module.

Define the map $f: U_{n-m,m} \rightarrow U_{2n-2k-m,m}$ as $f(v_T) = v_{T'}$ where $T'$ is the tabloid with numbers $1$ through $n-2k$ in the top row and $i+n-2k$ in the row where $i$ appears in $T$. Note that $f$ is injective. Figure \ref{fmap} has an example.

\begin{figure}[h]
$T = \begin{tabular}{|c|c|c|c|c|}
\cline{1-5} 1 & 4 & 5 & 6 & 7 \\
\cline{1-5} 2 & 3 & \multicolumn{2}{c}{} \\
\cline{1-2} \multicolumn{4}{c}{} \vspace{-1.3em} \end{tabular} \hspace{.75in} T' = \begin{tabular}{|c|c|c|c|c|c|}
\cline{1-6} 1 & 2 & 5 & 6 & 7 & 8 \\
\cline{1-6} 3 & 4 & \multicolumn{2}{c}{} \\
\cline{1-2} \multicolumn{4}{c}{} \vspace{-1.3em} \end{tabular} $
\caption{The behavior of tabloids under the map $f$ where $n = 7$ and $k = 3$.}\label{fmap}
\end{figure}

\begin{lemma}\label{dotinfo}
Let $M$ be a standard dotted noncrossing matching. Then $f(e_M) = e_{\varphi (M)}$ where $\varphi(M)$ is understood to be the completion of the associated matching along with dotting information.
\end{lemma}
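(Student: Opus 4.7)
The plan is to unpack all four ingredients---$e_M$, $e_{\varphi(M)}$, $h(\varphi(M))$, and $f$---and check that $f$ carries the sum defining $e_M$ termwise to the sum defining $e_{\varphi(M)}$. The proof will be largely bookkeeping and will proceed in the order described below.

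First I would observe the compatibility on underlying tabloids: $h(\varphi(M)) = f(h(M))$. This is a direct consequence of how $\varphi$ and $h$ are defined. The completion $\varphi$ shifts every original vertex by $n-2k$ and adds $n-2k$ new vertices on the left which become left endpoints of (dotted) arcs. Since undotted arcs in $M$ remain undotted in $\varphi(M)$ with their right endpoints shifted by $n-2k$, the bijection $h$ places these shifted right endpoints in the bottom row of $h(\varphi(M))$; every other number lands in the top row, including the added $1,\ldots,n-2k$. This is exactly the prescription defining $f(h(M))$.

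Next I would pin down how $f$ interacts with the symmetric group action. Let $G \leq S_{2n-2k}$ be the subgroup fixing $\{1,\ldots,n-2k\}$; identify $S_n$ with $G$ via $(i~j)\mapsto (i+n-2k~~j+n-2k)$. Directly from $f(v_T)=v_{T'}$, the map $f$ is $S_n$-equivariant with respect to this identification, i.e.\ $f(\sigma\cdot v_T) = \sigma' \cdot f(v_T)$ where $\sigma'$ is the image of $\sigma$ in $G$. Moreover, $\mathrm{Undot}(\varphi(M))$ is generated by transpositions $(i+n-2k~~j+n-2k)$ for undotted arcs $(i,j)\in M$, which is precisely the image of $\mathrm{Undot}(M)$ under this identification. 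So the assignment $\sigma\mapsto \sigma'$ is a sign-preserving bijection $\mathrm{Undot}(M)\to \mathrm{Undot}(\varphi(M))$.

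Combining the two observations, I would compute
\[
f(e_M) \;=\; \sum_{\sigma \in \mathrm{Undot}(M)} \mathrm{sign}(\sigma)\, f(v_{\sigma\cdot h(M)}) \;=\; \sum_{\sigma \in \mathrm{Undot}(M)} \mathrm{sign}(\sigma)\, v_{\sigma'\cdot f(h(M))},
\]
and then reindex using the bijection $\sigma\mapsto \sigma'$ together with $f(h(M)) = h(\varphi(M))$ to rewrite this as $\sum_{\tau\in \mathrm{Undot}(\varphi(M))} \mathrm{sign}(\tau)\, v_{\tau\cdot h(\varphi(M))} = e_{\varphi(M)}$, which is the desired identity.

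The argument has no real obstacle; the only thing that needs attention is matching the conventions carefully (which arcs are dotted in $\varphi(M)$, how $f$ relabels, and why the identification of $S_n$ with $G$ is the correct one). These are exactly the three compatibility statements above, and once each is spelled out the conclusion is immediate.
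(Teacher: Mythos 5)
Your proposal is correct and follows essentially the same route as the paper: identify $h(\varphi(M))$ with the tabloid $f$ produces from $h(M)$, match $\mathrm{Undot}(M)$ with $\mathrm{Undot}(\varphi(M))$ via the sign-preserving shift $(i~j)\mapsto(i+n-2k~~j+n-2k)$, and push $f$ termwise through the defining sum using the equivariance $f(\sigma\cdot v_T)=\sigma'\cdot f(v_T)$. The only cosmetic difference is that you invoke the equivariance of $f$ as a general fact (which the paper records separately as its Lemma~\ref{fequiv}), whereas the paper's proof of this lemma just states the needed identity directly for the tabloids $\sigma\cdot h(M)$; both amount to the same bookkeeping.
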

\begin{proof}
Recall that $h$ is the bijection taking standard Young tableaux to standard dotted noncrossing matchings. If $(i,j)$ is an undotted arc in $M$ then $(i+n-2k, j+n-2k)$ is an undotted arc in $\varphi(M)$. This means that $h(M)' = h(\varphi(M))$. Furthermore if $\textup{Undot}(M)$ is generated by transpositions $(i \hspace{.1in} j)$ then $\textup{Undot}(\varphi(M))$ is generated by transpositions $(i+n-2k \hspace{.1in} j+n-2k)$. 

The bijection $(i \hspace{.1in} j) \mapsto (i+n-2k \hspace{.1in} j+n-2k)$ between $S_n$ and $G$ restricts to a bijection between $\textup{Undot}(M)$ and $\textup{Undot}(\varphi(M))$. Moreover for $\sigma\in \textup{Undot}(M)$ and its corresponding $\sigma'\in \textup{Undot}\varphi(M)$ we have $(\sigma\cdot \theta(M))' = \sigma' \cdot \theta(\varphi(M))$. Thus we conclude that $f(e_M) = e_{\varphi(M)}$. 
\end{proof}

\begin{corollary}
The following diagram is commutative
 \[\begin{array}{rcl}
 H_*(X_{n-k,k}) & \stackrel{\eta_*}{\rightarrow} & H_*(X_{n-k,n-k}) \\
\textup{\tiny{$\zeta$}} \downarrow & & \downarrow {\textup{\tiny{$\zeta$}}} \\
U_{n-m,m} &  \stackrel{f}{\rightarrow} & U_{2n-2k-m,m}\end{array}\]
\end{corollary}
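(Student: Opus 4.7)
The plan is to verify commutativity by chasing a diagrammatic homology generator $M \in H_{2m}(X_{n-k,k})$ (a standard dotted noncrossing matching with $m$ undotted arcs) around the square and appealing directly to Lemma \ref{dotinfo}. Since $H_*(X_{n-k,k})$ is spanned by such $M$ and all maps in sight are linear, it suffices to check the identity $f(\zeta(M)) = \zeta(\eta_*(M))$ on a single generator $M$.

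First I would compute the right-hand path. As recalled in the proof of Proposition \ref{stabact}, the geometric map $\eta$ sends $S_{\bf a}$ into $S_{\varphi({\bf a})}$ and on diagrammatic generators $\eta_*$ sends $M$ to the completion $\varphi(M)$, where the new arcs added on the left are dotted (since all rays of $M$ were dotted) and the undotted arcs of $M$ are preserved in $\varphi(M)$. Although $\varphi(M)$ need not be standard, it is equivalent under Type II relations to a unique standard dotted noncrossing matching, and this does not affect $\zeta$ because $\zeta$ only records the undotted arcs of the underlying matching (via the bijection $h$). Thus $\zeta(\eta_*(M)) = e_{\varphi(M)}$.

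Next I would compute the left-hand path. By definition $\zeta(M) = e_M \in U_{n-m,m}$, and then $f(e_M) \in U_{2n-2k-m,m}$ is determined by the formula defining $f$. This is precisely what Lemma \ref{dotinfo} computes: $f(e_M) = e_{\varphi(M)}$. Combining these, $f(\zeta(M)) = e_{\varphi(M)} = \zeta(\eta_*(M))$, so the diagram commutes on each generator, hence everywhere.

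I do not expect a genuine obstacle here; the work was already done in Lemma \ref{dotinfo} and in the analysis of $\eta_*$ inside the proof of Proposition \ref{stabact}. The only point that merits care is reconciling the fact that $\varphi(M)$, viewed as a dotted noncrossing matching, may fail to be standard (undotted arcs remain in place but new dotted arcs are appended on the left and may violate the standard form); one must observe that Type II relations among dotted arcs do not alter $e_{\varphi(M)}$, since the set of undotted arcs—and hence the tableau $h(\varphi(M))$ and the subgroup $\textup{Undot}(\varphi(M))$—depends only on the equivalence class of $\varphi(M)$ modulo Types I, II, and III. Once this is noted, the corollary is immediate.
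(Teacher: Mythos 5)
Your proof is correct and matches the paper's intent: the corollary is stated without proof precisely because it follows immediately from Lemma \ref{dotinfo} together with the observation (already made in the proof of Proposition \ref{stabact}) that $\eta_*$ sends a diagrammatic generator $M$ to (the class of) $\varphi(M)$. The subtlety you flag — that $\varphi(M)$ need not be standard, but that this is immaterial because $e_{\varphi(M)}$ depends only on the undotted arcs, which Type II relations leave unchanged — is exactly the point the paper itself addresses in the proof of Proposition \ref{stabact}, so your argument is faithful to the paper's.
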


\begin{lemma} \label{fequiv}
The action of $S_n$ on $U_{n-m,m}$ commutes with the action of $G$ on $U_{2n-2k-m,m}$.
\end{lemma}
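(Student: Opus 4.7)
The plan is to verify equivariance directly on the basis of tabloids and then extend by linearity. Given a tabloid $T$ of shape $(n-m,m)$ with entries in $\{1,\ldots,n\}$ and a permutation $\sigma\in S_n$, I want to establish the identity $f(\sigma\cdot v_T)=\sigma'\cdot f(v_T)$, where $\sigma'\in G$ is the image of $\sigma$ under the isomorphism $(i\;j)\mapsto (i+n-2k\;\;j+n-2k)$.

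First I would unpack the left-hand side: $\sigma\cdot T$ is the tabloid in which the cell previously containing $i$ now contains $\sigma(i)$. Applying $f$ prepends a row of length $n-2k$ with entries $1,\ldots,n-2k$ and shifts each remaining entry by $n-2k$, producing a tabloid of shape $(2n-2k-m,m)$ whose first $n-2k$ top-row cells hold $1,\ldots,n-2k$ and whose cell previously holding $i$ (in $T$) now holds $\sigma(i)+n-2k$.

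Next I would unpack the right-hand side: $f(v_T)=v_{T'}$ where $T'$ has $1,\ldots,n-2k$ in the first $n-2k$ top-row cells and $i+n-2k$ in the cell that originally held $i$. Because $\sigma'$ fixes $\{1,\ldots,n-2k\}$ and sends $i+n-2k$ to $\sigma(i)+n-2k$, the tabloid $\sigma'\cdot T'$ has $1,\ldots,n-2k$ in the first $n-2k$ top-row cells and $\sigma(i)+n-2k$ in every other cell. A cell-by-cell comparison shows $(\sigma\cdot T)'=\sigma'\cdot T'$, hence $f(\sigma\cdot v_T)=\sigma'\cdot f(v_T)$. Extending by linearity completes the proof.

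There is no real obstacle here; the only point requiring care is the indexing convention for the $S_n$-action on tabloids (entries are replaced rather than positions permuted), so the cell where $\sigma(i)$ lands after acting is the same cell that originally held $i$, which is exactly what makes the shift by $n-2k$ compatible on both sides.
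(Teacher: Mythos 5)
Your proof is correct and follows essentially the same route as the paper's: verify the identity $f(\sigma\cdot v_T)=\sigma'\cdot f(v_T)$ on basis tabloids by tracking where each entry lands, using the key observation $\sigma'(i+n-2k)=\sigma(i)+n-2k$, then extend by linearity. The one small stylistic point is that since tabloids are defined only up to row permutation, the paper speaks of the \emph{row} that $i$ occupies rather than the \emph{cell}; your argument is unaffected, but phrasing it in terms of rows avoids the appearance of assuming more structure than a tabloid carries.
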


\begin{proof}
Let $\sigma'$ be the element of $G$ corresponding to $\sigma \in S_n$ via the bijection described above. For $v_T\in U_{n-m,m}$ we want to show that $f(\sigma\cdot v_T) = \sigma ' \cdot f(v_T) = v_{\sigma '\cdot T'}$.

The tabloid  $\sigma\cdot T$ has $\sigma(i)$ in the row that $i$ occupied in $T$. The tabloid corresponding to $f(\sigma\cdot v_T)$ has $1, \ldots, n-2k$ in the top row and $\sigma(i) + n-2k$ in the row that $i$ occupied in $T$. On the other hand $T'$ has $1,\ldots, n-2k$ in the top row and $i+n-2k$ in the row that $i$ occupied in $T$. The tabloid $\sigma'\cdot T'$ has $1, \ldots, n-2k$ on the top row and $\sigma'(i+n-2k)$ in the row that $i$ occupied in $T$. Since $\sigma'(i+n-2k) = \sigma(i) + n-2k$ we conclude that $f(\sigma\cdot v_T) = \sigma' \cdot f(v_T)$.
\end{proof}

\begin{corollary}
The map $\zeta: H_{2m}(X_{n-k,k}) \rightarrow W_{n-m,m}$ is an $S_n$-equivariant isomorphism of  complex vector spaces.
\end{corollary}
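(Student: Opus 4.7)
The plan is to leverage the commutative square just established, which relates $\zeta$ on $H_{2m}(X_{n-k,k})$ to the analogous map $\zeta:H_{2m}(X_{n-k,n-k})\to W_{2n-2k-m,m}$ already known from \cite{RT} to be an $S_{2n-2k}$-equivariant isomorphism. Surjectivity of $\zeta$ onto $W_{n-m,m}$ is tautological, since $W_{n-m,m}$ is by construction the span of the vectors $e_M=\zeta(M)$.

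For injectivity, I would run a short diagram chase. Suppose $M\in H_{2m}(X_{n-k,k})$ satisfies $\zeta(M)=0$. Applying the injective map $f$ and using commutativity of the diagram yields $\zeta(\eta_*(M))=f(\zeta(M))=0$. The known isomorphism on the right-hand side then gives $\eta_*(M)=0$, and injectivity of $\eta_*$ (from Proposition \ref{stabact}) forces $M=0$.

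For $S_n$-equivariance, I would chase the diagram in a similar way. Fix $\sigma\in S_n$ and let $\sigma'\in G\leq S_{2n-2k}$ be its image under the isomorphism $(i\;j)\mapsto(i+n-2k\;j+n-2k)$. The key preliminary step is to observe that $\eta_*$ intertwines the two actions, i.e.\ $\eta_*(\sigma\cdot M)=\sigma'\cdot\eta_*(M)$; this follows directly from Lemma \ref{commute}, the injectivity of $\gamma_*$, the fact that ${\iota_n}_*$ is manifestly $G$-equivariant for the permutation actions on the last $n$ of the $2n-2k$ tensor factors, and the definition of the $S_n$-action on $H_*(X_{n-k,k})$ as the pull-back via $\gamma_*$ of the coordinate permutation action on $H_*((S^2)^n)$. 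Then compute
\[
f(\zeta(\sigma\cdot M))=\zeta(\eta_*(\sigma\cdot M))=\zeta(\sigma'\cdot\eta_*(M))=\sigma'\cdot\zeta(\eta_*(M))=\sigma'\cdot f(\zeta(M))=f(\sigma\cdot\zeta(M)),
\]
using in turn commutativity, the intertwining property just established, the $G$-equivariance of $\zeta$ in the equal-row case (a restriction of its $S_{2n-2k}$-equivariance), commutativity again, and Lemma \ref{fequiv}. Injectivity of $f$ then gives $\zeta(\sigma\cdot M)=\sigma\cdot\zeta(M)$, completing the proof.

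The main obstacle in this plan is really the verification that $\eta_*$ intertwines the $S_n$-action with the $G$-action on the right; once this has been extracted from Lemma \ref{commute} and Proposition \ref{stabact}, everything else is a clean two-layer diagram chase that transports the known equivariant isomorphism on $H_*(X_{n-k,n-k})$ down to $H_*(X_{n-k,k})$ through the injective pair $(\eta_*,f)$.
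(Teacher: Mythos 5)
Your proof is correct and takes essentially the same route as the paper: both exploit the commutative square $f\circ\zeta=\zeta\circ\eta_*$, the known $S_{2n-2k}$-equivariant isomorphism in the equal-row case from \cite{RT}, injectivity of $\eta_*$ and $f$, and Lemma \ref{fequiv}. The paper packages the bijectivity argument slightly differently (writing $\zeta=f^{-1}\circ\zeta\circ\eta_*$ as a composite of isomorphisms rather than separating injectivity and the tautological surjectivity), and states rather than spells out the intertwining of $\eta_*$ that you carefully derive from Lemma \ref{commute} and the $G$-equivariance of ${\iota_n}_*$, but the underlying ideas are identical.
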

\begin{proof}
As we noted earlier $\eta_*$ is injective, so $\eta_*: H_{2m}(X_{n-k,k}) \rightarrow \eta_*(H_{2m}(X_{n-k,k})$ is an isomorphism. In \cite[Lemma 4.2]{RT} we prove that $\zeta: H_{2m}(X_{n-k,n-k}) \rightarrow W_{2n-2k-m,m}\subset U_{2n-2k-m,m}$ is an $S_{2n-2k}$ - equivariant isomorphism, so it follows that $\zeta: \eta_*(H_{2m}(X_{n-k,k})) \rightarrow f(W_{n-m,m})$ is a $G$ - equivariant isomorphism. Finally Lemma \ref{dotinfo} implies that $f$ is injective, so $f: W_{n-m,m} \rightarrow f(W_{n-m,m})$ is an isomorphism. Thus $\zeta = f^{-1} \circ \zeta \circ \eta_*: H_{2m}(X_{n-k,k}) \rightarrow W_{n-m,m}$ is an isomorphism.

By construction the action of $S_n$ on $H_{2m}(X_{n-k,k})$ commutes with the action of $G$ on $\eta_*(H_{2m}(X_{n-k,k}))$.  Since $\zeta: H_{2m}(X_{n-k,n-k}) \rightarrow U_{2n-2k-m,m}$ is $S_{2n-2k}$ - equivariant it is also $G$ - equivariant. This information together with Lemma \ref{fequiv} proves that $\zeta: H_{2m}(X_{n-k,k}) \rightarrow W_{n-m,m}$ is $S_n$-equivariant.
\end{proof}

\begin{theorem} \label{springer repn}
The Specht module $V_{n-m,m}$ and the matching module $W_{n-m,m}$ are equal as $S_n$-representations.  Thus the $S_n$ action on $H_*(X_{n-k,k})$ is the Springer representation.
\end{theorem}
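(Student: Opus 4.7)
The plan is to establish the key algebraic fact that the matching module coincides with the Specht module as subspaces of $U_{n-m,m}$. Once this identification is in hand, the preceding corollary gives an $S_n$-equivariant isomorphism $\zeta : H_{2m}(X_{n-k,k}) \to W_{n-m,m} = V_{n-m,m}$, and since $V_{n-m,m}$ is by Proposition \ref{GP} the Springer representation, the theorem follows.

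To show $W_{n-m,m} \subseteq V_{n-m,m}$, the natural approach is to exhibit each generator $e_M$ as $e_{T'}$ for some (not necessarily standard) tableau $T'$ of shape $(n-m,m)$; recall that by definition the Specht module is spanned by $\{e_T\}$ over all tableaux $T$, not only the standard ones. Given a standard dotted noncrossing matching $M$ with $m$ undotted arcs, I would define $T'$ to be the tableau whose two-element columns are precisely the undotted arcs of $M$, with the left endpoint placed on top and the right endpoint on the bottom, and whose remaining $n-2m$ singleton columns are filled with the ray vertices and dotted-arc vertices of $M$ (arranged in any order across the top row).

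The required identity $e_M = e_{T'}$ then reduces to two elementary checks. First, $\textup{Col}(T')$ is generated by the column transpositions of $T'$, which by construction are exactly the transpositions $(i,j)$ coming from undotted arcs of $M$; hence $\textup{Col}(T') = \textup{Undot}(M)$. Second, the base tabloids $v_{T'}$ and $v_{g(M)}$ agree, because in both of them the bottom row is the set of right endpoints of undotted arcs of $M$ and the top row is the complementary set of vertices. Since tabloids depend only on their row partition, $v_{\sigma \cdot T'} = v_{\sigma \cdot g(M)}$ for every $\sigma \in S_n$. Summing with signs over the common group $\textup{Undot}(M) = \textup{Col}(T')$ gives $e_{T'} = e_M$, so $e_M \in V_{n-m,m}$.

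A dimension count finishes the argument. Lemma \ref{biject} together with the classical fact that $\dim V_{n-m,m}$ equals the number of standard Young tableaux of shape $(n-m,m)$ yields $\dim W_{n-m,m} = \dim H_{2m}(X_{n-k,k}) = \dim V_{n-m,m}$, so the inclusion $W_{n-m,m} \subseteq V_{n-m,m}$ must be an equality. I do not anticipate a serious obstacle; the construction of $T'$ from $M$ is the only nontrivial idea, and the remaining verifications are routine bookkeeping with tabloids and column stabilizers.
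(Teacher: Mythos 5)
Your proof is correct and gives a genuinely more hands-on version of the paper's argument. The paper proceeds by observing that $V_{n-m,m}\cap W_{n-m,m}$ is an $S_n$-subrepresentation of the irreducible module $V_{n-m,m}$, so the intersection is either $0$ or all of $V_{n-m,m}$; it then produces a single common vector $e_M$ by choosing the one standard dotted noncrossing matching $M$ with undotted arcs $(1,2),(3,4),\ldots,(2m-1,2m)$, for which $\textup{Undot}(M)=\textup{Col}(g(M))$ holds automatically, and concludes by the dimension count. You instead show the containment $W_{n-m,m}\subseteq V_{n-m,m}$ directly, by associating to \emph{every} generator $e_M$ a (not necessarily standard) tableau $T'$ whose two-element columns are the undotted arcs of $M$; the same identification $\textup{Undot}(M)=\textup{Col}(T')$ and the same equality of base tabloids then give $e_M=e_{T'}\in V_{n-m,m}$. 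Both arguments close with the dimension count via Lemma \ref{biject}. The paper's route is shorter because irreducibility of the Specht module lets it get away with a single favorable example; yours avoids invoking irreducibility at the cost of a slightly longer bookkeeping step, and has the mild advantage of exhibiting each polytabloid $e_M$ explicitly as a classical Specht-module generator, which makes the containment transparent rather than abstract. Your verification that $\textup{Col}(T')=\textup{Undot}(M)$ and that the row partitions of $T'$ and $g(M)$ coincide is exactly the right computation, and the count of singleton positions $(n-2k)+2(k-m)=n-2m$ confirms $T'$ has the correct shape.
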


\begin{proof} Since $V_{n-m, m}$ and $W_{n-m, m}$ are $S_n$-representations in $U_{n-m,m}$ the intersection $V_{n-m,m}\cap W_{n-m,m}$ is also an $S_n$ representation.  The Specht module $V_{n-m,m}$ is irreducible, so $V_{n-m,m}\cap W_{n-m,m}$ is either $0$ or $V_{n-m,m}$. 
                                                                               
Let $M$ be the standard dotted noncrossing matching with $m$ undotted arcs 

\noindent $(1,2), (3,4), \ldots, (2m-1, 2m)$. Then $M$ represents a basis element in $H_{2m}(X_{n-k,k})$. Furthermore the columns of $h(M)$ are exactly the undotted arcs of $M$, so $\textup{Undot}(M) = \textup{Col}(h(M))$. This means that $e_{M} = e_{h(M)}$, so the matching module and Specht module always have a common vector.

Since $V_{n-m,m} \cap W_{n-m,m}$ is nonempty we conclude that the intersection is all of $V_{n-m,m}$.  Because $V_{n-m,m}$ and $W_{n-m,m}$ have the same dimension, we conclude that $V_{n-m,m} = W_{n-m,m}$.  Since this is the irreducible representation corresponding to partition $(n-m,m)$, Proposition \ref{GP} allows us to conclude that this is the Springer representation.
\end{proof}

\subsection{A skein-theoretic formulation of the Springer action}
The Springer action on $H_*(X_{n-k,k})$ is described diagrammatically in the charts in Figures \ref{snact} and \ref{rayact}. We can also describe the action skein-theoretically. 

\begin{theorem}
Given standard dotted noncrossing matching $M\in H_*(X_{n-k,k})$ and $\sigma\in S_n$ glue a flattened braid corresponding to $\sigma$ to the bottom of $M$ forming $M'$. 
Then the Springer action $\sigma\cdot M$ is equal to $s(M')$ where $s$ is defined as follows.
\begin{itemize}
\item{$s\left( \raisebox{-.13in}{\includegraphics[width=.3in]{crossing.eps}}\right) = s\left( \raisebox{-.13in}{\includegraphics[width=.3in]{smooth1.eps}}\right) + s \left( \raisebox{-.13in}{\includegraphics[width=.3in]{smooth2.eps}}\right)$}
\item{$s\left( \raisebox{-.15in}{\includegraphics[width=.3in]{doubdot.eps}}\right) = 0$}
\item{ $s\left( M' \sqcup \raisebox{-.1in}{\includegraphics[width=.3in]{simcirc.eps}} \right) = s(-2M')$}
\item{$s\left( M' \sqcup \raisebox{-.1in}{\includegraphics[width=.3in]{simcircdot.eps}} \right) = s(M')$}
\end{itemize}
\end{theorem}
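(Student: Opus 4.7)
My plan is to reduce the claim to a case-by-case verification against the combinatorial description of the Springer action already recorded in Figures \ref{snact} and \ref{rayact}. Since $S_n$ is generated by simple transpositions $(i\hspace{.1in} i+1)$, and the flat braid for a composition $\sigma_1\sigma_2$ is the vertical concatenation of the flat braids for $\sigma_1$ and $\sigma_2$, it suffices to check the formula when $\sigma$ is a single simple transposition. In that case $M'$ is obtained from $M$ by attaching exactly one crossing at the bottom between positions $i$ and $i+1$.

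The first skein relation replaces this crossing with the sum of its two smoothings. The vertical smoothing leaves the local arc/ray incidences at $i$ and $i+1$ unchanged, while the horizontal smoothing reconnects the two strands meeting the crossing, potentially merging two arcs, an arc and a ray, or two rays into a new configuration, and sometimes producing a closed component. The remaining three skein relations then evaluate the closed components: an undotted circle contributes $-2$, a dotted circle contributes $1$, and any component carrying two dots vanishes. After each smoothing I would apply the Type I, Type II, and Type III relations from Section \ref{hombas} to rewrite non-standard outputs in the standard diagrammatic basis.

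Running this computation in each local configuration at positions $i,i+1$ should reproduce the right-hand column of the charts. For example, when $(i,i+1)$ is an undotted arc (Case 2 of Figure \ref{snact}), the horizontal smoothing creates an undotted circle multiplying an unchanged matching, so $s(M') = M + (-2)M = -M$, matching the chart. When $i$ and $i+1$ lie on two distinct dotted arcs (Case 1), the horizontal smoothing produces a doubly-dotted path that vanishes, and the vertical smoothing alone contributes the chart's output. The cases involving rays (Cases 5--7 of Figure \ref{rayact}) work similarly, with the dotted-circle rule providing the coefficient $1$ that accounts for the single-term output in Case 6 and the summed output in Case 7, while the two-arcs-with-mixed-dots situations in Cases 3 and 4 produce two nontrivial smoothings whose sum, after Type I--III reduction, matches the two-term outputs in the chart.

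The main obstacle is not conceptual but rather the bookkeeping: there are roughly seven distinct local configurations across the two charts, each requiring a short smoothing-and-reduction computation, and the distinctions between nested and unnested arcs together with the relative positions of rays and dots must be tracked carefully to avoid sign or combinatorial errors. Once every case has been checked, the skein formula $\sigma \cdot M = s(M')$ follows for a single transposition, and the general case follows by iteration.
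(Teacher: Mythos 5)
The paper offers no proof of this theorem at all — the statement is followed only by an example computation — so there is no "paper proof" to compare against; the case-by-case verification you propose against Figures \ref{snact} and \ref{rayact} is clearly the intended route, and your reduction to simple transpositions is the correct organizing principle.

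That said, your sketch of the individual cases contains some inaccuracies that would matter if you carried out the verification in full. You attribute the single-term output of Case 6 to the dotted-circle rule with coefficient $1$, but that is not what happens there: when $i,i+1$ sit on a dotted ray and a dotted arc, the horizontal smoothing does not create a closed circle at all — it concatenates the ray and the arc into a single open path carrying two dots, which dies by the doubly-dotted rule, so the vertical smoothing alone survives. Similarly, Case 7 (ray and undotted arc) never produces a closed component of any kind; both smoothings survive as open diagrams with at most one dot, and neither circle rule is invoked. The dotted-circle rule is only ever triggered by the subcase of Case 1 where $(i,i+1)$ is itself a single dotted arc, and this is precisely the subcase your proposal omits (you only treat "two distinct dotted arcs"). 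That omitted subcase deserves care: the horizontal smoothing closes the dotted arc into a dotted circle while the cup produces a new undotted arc at $(i,i+1)$, which sits in a different homological degree, and one must check that the resulting sum still matches the degree-preserving output recorded in the chart. Finally, for the inductive step from a single crossing to a general flat braid word you should note that one must also check $s$ is compatible with the Type I--III relations from Section \ref{hombas}, since the intermediate diagrams produced after smoothing one crossing need not be standard; this compatibility is what licenses applying the single-transposition computation again to each intermediate term.
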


As an example consider the permutation $(1 \hspace{.1in} 2 \hspace{.1in} 3)$ acting on the following generator of $H_2(X_{2,1})$.

$$(1 \hspace{.05in} 2 \hspace{.05in} 3) \cdot \raisebox{-.05in}{\includegraphics[width=.4in]{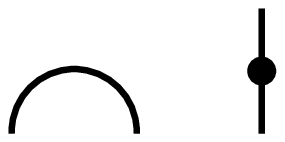}} \hspace{.1in} = 
s\left( \raisebox{-.2in}{\includegraphics[width=.4in]{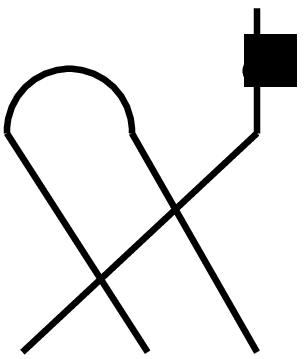}}\right)  = s\left(\raisebox{-.2in}{\includegraphics[width=.4in]{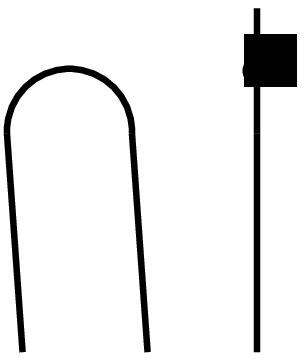}}\right) + s\left(\raisebox{-.2in}{\includegraphics[width=.4in]{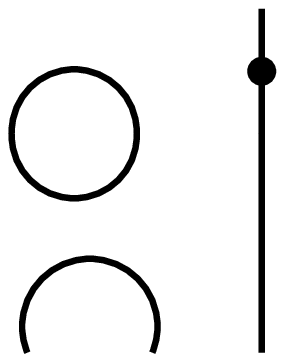}}\right) + s\left(\raisebox{-.2in}{\includegraphics[width=.4in]{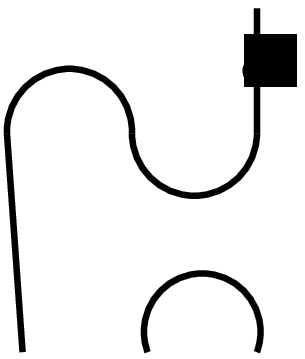}}\right) + s\left(\raisebox{-.2in}{\includegraphics[width=.4in]{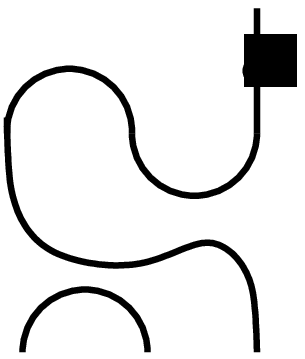}}\right)$$ 
$$= \raisebox{-.05in}{\includegraphics[width=.4in]{skeinex1.eps}} \hspace{.1in} - \hspace{.1in} \raisebox{-.05in}{2 \includegraphics[width=.4in]{skeinex1.eps}} \hspace{.1in} + \hspace{.1in} \raisebox{-.05in}{\includegraphics[width=.4in]{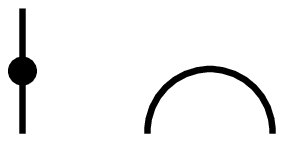}} \hspace{.1in} + \hspace{.1in} \raisebox{-.05in}{\includegraphics[width=.4in]{skeinex1.eps}} =  \raisebox{-.05in}{\includegraphics[width=.4in]{skeinex7.eps}} $$

\section{Appendix}
In Section 3 we cite Propositions \ref{CircleDist} and \ref{Khovarrows} from \cite{K}. Proofs of these results are left to the reader in \cite{K}. For the sake of completeness we include proofs of these statements here.

\begin{proof}[{\bf Proof of Proposition \ref{CircleDist}}]
Let $|{\bf a}w({\bf b})|$ be denoted by $c$. This proof proceeds by induction on $c$. We begin with the case that $c=1$ and then argue by induction. The following algorithm finds a sequence of $\frac{n}{2}-1$ moves on ${\bf a}$ that yields ${\bf b}$. An arc $(i,j)$ in ${\bf b}$ is said to be unpaired in ${\bf a}w({\bf b})$ if ${\bf a}$ does not have arc $(i,j)$. Let ${\bf a} = {\bf a_0}$ and let $t = 0$.
\begin{enumerate}
\item If ${\bf b}$ has no unpaired arcs in ${\bf a_t}w({\bf b})$, then ${\bf a_t} = {\bf b}$ and we are done. Otherwise begin with the narrowest leftmost arc $(i,j)$ in ${\bf b}$ that is unpaired in ${\bf a_t}w({\bf b})$. Then ${\bf a_t}$ has arcs $(i,k)$, $(j,l)$ where $i,j,k,l$ have no particular order. 
\item Perform $\rightarrow$ or $\leftarrow$ in ${\bf a_t}$ to produce a new matching ${\bf a_{t+1}}$ with arcs $(i,j)$ and $(k,l)$. 
\item Increment $t$ by 1, and repeat the first two steps.
\end{enumerate}

It is always possible to perform $\rightarrow$ or $\leftarrow$ as required.  At each step the algorithm guarantees all arcs nested beneath $(i,j)$ are paired in ${\bf a_t}w({\bf b})$ since arcs beneath $(i,j)$ are narrower than $(i,j)$. Thus each arc in ${\bf a_t}$ either has zero or two endpoints between vertices $i$ and $j$. This means that the arcs $(i,k)$ and $(j,l)$ are adjacent in ${\bf a_t}$, and $\rightarrow$ or $\leftarrow$ can be performed to get a new matching ${\bf a_{t+1}}$ with arcs $(i,j)$ and $(k,l)$.

The number of circles in ${\bf a_{t+1}}w({\bf b})$ is one more than the number of circles in ${\bf a_{t}}w({\bf b})$. To see this note that the arcs $(i,k), (j,l)$ in ${\bf a_t}$ and the arc $(i,j)$ in ${\bf b}$ are all part of the same circle in ${\bf a_t}w({\bf b})$. The move that produces ${\bf a_{t+1}}$ must change the number of circles. Since this move acts on two arcs that are part of a single circle, the only possibility is that the move increases the number of circles by one.

Since we increase the number of circles with each iteration of our algorithm, this process terminates after $\frac{n}{2}-1$ iterations at which point we will have $\frac{n}{2}$ circles - the maximum number of circles possible. This proves that $d({\bf a},{\bf b})\leq \frac{n}{2}-1$. 

In an arbitrary sequence of moves, each time ${\bf a_t} \rightarrow {\bf a_{t+1}}$ or ${\bf a_t}\leftarrow {\bf a_{t+1}}$ is performed the number of circles in ${\bf a_t}w({\bf b})$ is either one greater or one less than ${\bf a_{t+1}}w({\bf b})$ depending on whether the move taking ${\bf a_t}$ to ${\bf a_{t+1}}$ joins two circles or splits a single circle. Since we start with one circle, it will require at least $\frac{n}{2}-1$ moves to produce $\frac{n}{2}$ circles. This means our algorithm generates a minimal sequence $({\bf a}= {\bf a_0}, {\bf a_1}, \ldots, {\bf a_{\frac{n}{2}-2}}, {\bf a_{\frac{n}{2}-1}} = {\bf b})$ and $d({\bf a},{\bf b}) = \frac{n}{2}-1$. 

Now consider ${\bf a},{\bf b}\in B^{n/2, n/2}$ where ${\bf a}w({\bf b})$ has $c$ circles, and enumerate the circles $\alpha_1, \ldots, \alpha_c$. Each circle $\alpha_r$ passes through the horizontal axis $2q_r$ times.  By the above argument we have
$$d({\bf a},{\bf b}) = \sum_{1\leq r \leq c} (q_r - 1) = \left( \sum_{1\leq r \leq c} q_r\right) - c = \frac{n}{2}-c.$$
\end{proof}

\begin{proof}[{\bf Proof of Proposition \ref{Khovarrows}}]

This is proven using induction on the distance between ${\bf a}$ and ${\bf b}$. If there exists a minimal sequence of moves taking ${\bf a}$ to ${\bf b}$ that begins with $\leftarrow$ the inductive step is clear. The lemmas below are required in order to handle the case that no minimal sequences of moves taking ${\bf a}$ to ${\bf b}$ begin with $\leftarrow$. When this occurs, the following arguments prove that there always exists a minimal sequence for ${\bf a}$ and ${\bf b}$ of the form $(\rightarrow, \rightarrow, \ldots, \rightarrow, \rightarrow)$. In other words, we want to show that in this case we take ${\bf a}={\bf c}$.

\begin{lemma}
Let ${\bf a}$ be the matching with arcs $$(1,2), (3,4), \ldots, (n-3, n-2), (n-1, n)$$ and ${\bf b}$ be the matching with arcs $$(2,3), (4,5), \ldots, (n-4, n-3), (n-2, n-1), (1, n).$$ 
Then there exists a minimal sequence $({\bf a} \rightarrow {\bf a_1} \ldots {\bf a_{m-1}} \rightarrow {\bf b})$.
\end{lemma}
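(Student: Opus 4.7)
The plan is to exhibit an explicit sequence of $n/2 - 1$ arrows of type $\rightarrow$ taking ${\bf a}$ to ${\bf b}$ and then invoke Proposition \ref{CircleDist} to conclude that this sequence is minimal.

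First I would compute $|{\bf a}w({\bf b})|$. Starting at vertex $1$ and alternately traversing a top arc from ${\bf a}$ and a bottom arc from the reflection of ${\bf b}$, the path visits the vertices in the order $1, 2, 3, \ldots, n-1, n, 1$, using the arcs $(1,2), (2,3), (3,4), \ldots, (n-1,n)$ followed by the long bottom arc $(1,n)$. This is a single circle through all $n$ vertices, so $|{\bf a}w({\bf b})| = 1$ and Proposition \ref{CircleDist} gives $d({\bf a}, {\bf b}) = n/2 - 1$.

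Next I would set ${\bf a_0} = {\bf a}$ and, for each $0 \leq i \leq n/2 - 2$, define ${\bf a_{i+1}}$ from ${\bf a_i}$ by applying $\rightarrow$ to the unnested pair of arcs $(1, 2i+2), (2i+3, 2i+4)$, producing the nested pair $(1, 2i+4), (2i+2, 2i+3)$. A straightforward induction on $i$ shows that ${\bf a_i}$ consists of the arcs
$$(1, 2i+2),\ (2,3),\ (4,5),\ \ldots,\ (2i, 2i+1),\ (2i+3, 2i+4),\ \ldots,\ (n-1, n),$$
so at each step the two arcs being modified really are present in ${\bf a_i}$ and the remainder of the matching is identical off of these four vertices, making the move a valid $\rightarrow$. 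When $i = n/2 - 2$, the resulting matching ${\bf a_{n/2-1}}$ has arcs $(1, n), (2,3), (4,5), \ldots, (n-2, n-1)$, which is exactly ${\bf b}$.

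Since this sequence has length $n/2 - 1 = d({\bf a}, {\bf b})$, it is a minimal sequence from ${\bf a}$ to ${\bf b}$ consisting entirely of $\rightarrow$ arrows, as required. There is no serious obstacle here; the real work is selecting the sequence that "zips up" the outermost arc of ${\bf b}$ one step at a time by repeatedly stretching the leftmost arc rightward over the next adjacent little arc, after which the verification of each move and of minimality is routine.
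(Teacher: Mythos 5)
Your proposal is correct and takes essentially the same approach as the paper: the paper invokes the algorithm from the proof of Proposition \ref{CircleDist} (the ``narrowest leftmost unpaired arc'' rule), which for this pair of matchings produces precisely the sequence you wrote out explicitly, namely repeatedly stretching the leftmost arc $(1,2i+2)$ over the adjacent arc $(2i+3,2i+4)$. The only difference is presentational: you verify minimality by directly computing $|{\bf a}w({\bf b})|=1$ and matching the sequence length to $n/2-1$, whereas the paper gets minimality for free from the fact that its algorithm always produces a minimal sequence; both routes rest on Proposition \ref{CircleDist}.
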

\begin{proof}
Consider the minimal sequence generated by the algorithm in the proof of Proposition \ref{CircleDist}. Each move takes the matching with arcs $(1,i), (i+1, i+2)$ to the matching with arcs $(1,i+2), (i,i+1)$. Since arcs $(1,i), (i+1,i+2)$ are unnested these moves always have the form $\rightarrow$.  
\end{proof}

This next lemma addresses circles of this type within more complicated gluings of two matchings.

\begin{lemma}\label{nesty}
Let $i_1<\cdots <i_{p}$. Let ${\bf a}$ be some matching with arcs $(i_1, i_2), \ldots, (i_{p-1}, i_{p})$. Let ${\bf b}$ be some matching with arcs $(i_2, i_3), \ldots, (i_{p-2}, i_{p-1}), (i_1, i_{p})$. There exists a minimal sequence for ${\bf a}$ and ${\bf b}$  where all moves on arcs incident on the vertices $i_1, \ldots, i_{p}$ have the form $\rightarrow$.
\end{lemma}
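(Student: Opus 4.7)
The plan is to explicitly construct the minimal sequence by generalizing the algorithm from the preceding lemma. The natural context for the hypothesis, consistent with how the lemma will be invoked in the proof of Proposition \ref{Khovarrows}, is that ${\bf a}$ and ${\bf b}$ agree off the vertices $i_1, \ldots, i_p$. I would first record the resulting structural constraint: since ${\bf b}$ contains $(i_1, i_p)$ together with each inner arc $(i_{2\ell}, i_{2\ell+1})$, and ${\bf a}$ contains each $(i_{2\ell-1}, i_{2\ell})$, no ray or out-extending arc can occupy any interval bounded by consecutive $i_j$'s, so all ``other stuff'' sits in self-contained packets between these vertices.

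I would then write down the candidate sequence step by step. At step $\ell$, apply the $\rightarrow$ move to the current arcs $(i_1, i_{2\ell})$ and $(i_{2\ell+1}, i_{2\ell+2})$, producing $(i_1, i_{2\ell+2})$ and $(i_{2\ell}, i_{2\ell+1})$. Each of these $p/2 - 1$ moves involves only vertices among the $i_j$'s and takes a pair of unnested arcs to a pair of nested arcs, so is of type $\rightarrow$. Validity in the full matching is immediate from the structural observation: the new inner arc $(i_{2\ell}, i_{2\ell+1})$ encloses only self-contained arcs, with no rays or extending arcs to obstruct it. After $p/2 - 1$ iterations the arcs on $i_j$'s match those of ${\bf b}$, and since ${\bf a}$ and ${\bf b}$ agree elsewhere, the resulting matching is ${\bf b}$.

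Minimality of this sequence would follow from Corollary \ref{distance}. Tracing ${\bf a}w({\bf b})$ shows that the arcs of ${\bf a}$ and ${\bf b}$ on $i_1, \ldots, i_p$ weave together into a single circle through all $p$ vertices (starting at $i_2$, crossing down through $(i_1, i_p)$ in ${\bf b}$, then alternating top and bottom arcs), while in ${\bf b}w({\bf b})$ the same vertices support $p/2$ separate circles. The remaining arcs and rays agree in ${\bf a}$ and ${\bf b}$ and contribute identically to both counts, so $d({\bf a}, {\bf b}) = |{\bf b}w({\bf b})| - |{\bf a}w({\bf b})| = p/2 - 1$, exactly the length of the constructed sequence.

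The main obstacle is justifying the implicit assumption that ${\bf a}$ and ${\bf b}$ agree off the $i_j$'s. If they do not, one must interleave the $\rightarrow$ moves above with moves on other vertices that transform the rest of ${\bf a}$ into the rest of ${\bf b}$. Since any such ancillary moves involve vertex sets disjoint from $\{i_1, \ldots, i_p\}$, they commute with the $\rightarrow$ moves on $i_j$'s and can be performed independently of the core sequence, preserving both minimality (by an additive decomposition of $d({\bf a},{\bf b})$ into the $i_j$-contribution and the off-$i_j$-contribution) and the required property that all moves on $i_j$'s are of type $\rightarrow$.
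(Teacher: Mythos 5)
Your proof takes a genuinely different route from the paper's. The paper's proof of Lemma \ref{nesty} runs induction on $p$ and, rather than writing down the sequence by hand, invokes the algorithm from the proof of Proposition \ref{CircleDist}: that algorithm always processes the narrowest unpaired arc of ${\bf b}$ first, so when it reaches the narrowest $(i_k,i_{k+1})$ it finds the unnested pair $(i_{k-1},i_k),(i_{k+1},i_{k+2})$ in the current matching and replaces it by the nested pair $(i_{k-1},i_{k+2}),(i_k,i_{k+1})$, a $\rightarrow$ move; after renumbering the remaining $p-2$ vertices the inductive hypothesis applies. This is an \emph{inside-out} peeling. Your construction instead sweeps \emph{left-to-right}, repeatedly enlarging $(i_1,i_{2\ell})$ by absorbing the next arc to its right. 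Both are valid minimal sequences of $\rightarrow$ moves, and your appeal to the circle-count formula from Corollary \ref{distance} to certify minimality is sound.

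What distinguishes the two approaches in practice is the handling of the off-$\{i_j\}$ arcs, and this is where your write-up has a genuine soft spot. The paper never needs the hypothesis that ${\bf a}$ and ${\bf b}$ agree off the $i_j$'s: the algorithm from Proposition \ref{CircleDist} is a single global procedure that processes all unpaired arcs (on and off the $i_j$'s) in one canonical order, and the lemma is only the observation that \emph{whenever} it touches the $i_j$-vertices the move is $\rightarrow$. In your proof the core construction really does need ${\bf a}$ and ${\bf b}$ to agree off the $i_j$'s, both for the structural ``self-contained packets'' claim and for verifying that each intermediate matching is noncrossing (you implicitly use that $(i_{2\ell},i_{2\ell+1}) \in {\bf b}$ and $(i_1,i_p) \in {\bf b}$ to rule out crossings with ambient arcs). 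Your final paragraph recognizes this and asserts that ancillary moves on disjoint vertex sets ``commute'' and can be interleaved while preserving validity and minimality. The additive decomposition of the distance you cite is correct, but the commutativity of moves is not automatic: a $\rightarrow$ or $\leftarrow$ move is only legal if the resulting diagram is still a noncrossing matching, and whether two moves on disjoint vertex sets can be performed in either order depends on the global configuration of intervening arcs, not just on the two vertex quadruples. This step would need an argument (for instance, that every intermediate matching in your interleaved sequence lies ``between'' ${\bf a}$ and ${\bf b}$ in the sense of Lemma \ref{refine}, so its arcs only ever join or split arcs of a single circle and never create crossings). As written it is a gap; the paper avoids it entirely by letting the pre-existing algorithm do the interleaving.
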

\begin{proof}
We prove this using induction. Assume $i_1<i_2<i_3<i_4$ and that ${\bf a}$ is a matching with arcs $(i_1, i_2), (i_3,i_4)$ while ${\bf b}$ is a matching with arcs $(i_2, i_3), (i_1,i_4)$. The algorithm given in the proof of Proposition \ref{CircleDist} will eventually find arc $(i_2,i_3)$ unpaired in ${\bf a_t}w({\bf b})$. It will perform the move $\rightarrow$  taking the unnested pair of arcs $(i_1, i_2), (i_3, i_4)$ to the nested pair of arcs $(i_2,i_3), (i_1,i_4)$.

Assume that the above result is true for some $p$. Consider $$i_1<\cdots <i_{p+2}.$$ Take ${\bf a}$ and ${\bf b}$ to be matchings as in the statement of the corollary. The algorithm from the proof of Proposition \ref{CircleDist} will eventually find the narrowest arc in ${\bf b}$ incident on these vertices. Say this arc is $(i_k, i_{k+1})$. The move $\rightarrow$ will be performed on matching ${\bf a_t}$ producing new arcs $(i_{k-1}, i_{k+2}), (i_{k}, i_{k+1})$. The arc $(i_k, i_{k+1})$ is now paired in ${\bf a_{t+1}}w({\bf b})$. After renumbering the remaining arcs are incident on $p$ vertices as in the statement of the corollary. We apply the inductive hypothesis to the remaining arcs, and the result follows by induction.
\end{proof}

Before the next lemma, we provide one technical definition that will make the statement of the lemma easier.

\begin{definition}
Let $i<j<k<l$ and suppose $(i,l), (j,k)$ are nested arcs in some matching ${\bf a}$. We say that an arc is between $(i,l)$ and $(j,k)$ if the arc in ${\bf a}$ with left endpoint between $i$ and $j$ and right endpoint between $k$ and $l$. 
\end{definition}

\begin{lemma}
Let ${\bf a},{\bf b}\in B^{n/2, n/2}$. Let $i<j<k<l$ and suppose $(i,l), (j,k)$ are nested arcs in ${\bf a}$ that are part of the same circle in ${\bf a}w({\bf b})$ such that there is no arc in ${\bf a}$ lying between $(i,l)$ and $(j,k)$ that is also part of that circle. Then the number of arcs between $(i,l), (j,k)$  is even.
\end{lemma}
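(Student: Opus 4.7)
The plan is to reduce the claim to a parity statement about $i$ and $j$, and then establish it using the circle structure together with a planarity argument.

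First I would count between arcs modulo $2$. Every vertex strictly between $i$ and $j$ is an endpoint of some ${\bf a}$-arc; by noncrossingness, the other endpoint must lie either in $(i,j)$ (an arc entirely inside $(i,j)$) or in $(k,l)$ (a between arc), since it cannot cross $(i,l)$ or $(j,k)$. Writing $s$ for arcs inside $(i,j)$ and $b$ for between arcs, $j-i-1 = b + 2s$, so $b$ is even iff $j-i$ is odd iff $i$ and $j$ have different parities. So it suffices to show $i \not\equiv j \pmod{2}$.

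For this I would use that in any noncrossing matching (no rays) every arc $(p,q)$ has endpoints of opposite parity, since the $q-p-1$ intermediate vertices are paired up by nested arcs. This applies to every edge (${\bf a}$-arc or ${\bf b}$-arc) of ${\bf a}w({\bf b})$, so on the circle $C$ containing $(i,l)$ and $(j,k)$ the vertex parities alternate. Label the vertices of $C$ as $v_0,\dots,v_{2m-1}$ cyclically so that $(v_0,v_1)=(i,l)$ oriented $i\to l$; then ${\bf a}$-arcs occupy even-position edges and $(j,k)=(v_p,v_{p+1})$ for some even $p$. Parity alternation then leaves only two options: either $v_p=j$, giving $j\equiv i\pmod 2$ (Case A), or $v_p=k$, giving $k\equiv i$ and hence $j\not\equiv i\pmod 2$ (Case B). I need to rule out Case A.

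To do so I would argue by planarity. Compactify the plane to $S^2$, so that $C$ is a Jordan curve bounding two open discs $D_1,D_2$. Pick $x_0\in(j,k)$, and let $Q$ be a point just below $(i,l)$ and $P'$ a point just above $(j,k)$, both at $x$-coordinate $x_0$. The vertical segment $QP'$ lies entirely in the annular region between $(i,l)$ and $(j,k)$ inside the strip $j<x<k$. Any ${\bf a}$-arc of $C$ meeting this strip must be $(j,k)$ itself, a between arc, or an arc nested inside $(j,k)$; the first and third are below $P'$, and the second is forbidden by hypothesis, so $QP'$ avoids $C$ (no ${\bf b}$-arc lives in the upper half-plane). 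Thus $Q$ and $P'$ lie in the same disc $D_1$. Orient $C$ with $D_1$ on the right of the walker: along $(i,l)$, $D_1$ is on the inside-of-$(i,l)$ side, so the traversal is $i\to l$, consistent with our labelling; along $(j,k)$, $D_1$ is on the annular side (above $(j,k)$), forcing the traversal $k\to j$, which is exactly Case B.

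The main obstacle is this planarity step: the hypothesis that no between arc lies on $C$ is used crucially here, because otherwise a between arc crossing the strip $j<x<k$ could separate $Q$ from $P'$ in $S^2\setminus C$ and make Case A possible. Once the orientations of $(i,l)$ and $(j,k)$ on $C$ are pinned down, the parity accounting from the first paragraph immediately yields that $b$ is even.
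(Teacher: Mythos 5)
Your argument is correct, and it takes a genuinely different route from the paper's. The paper fixes any circle $c$ of ${\bf a}w({\bf b})$ carrying a between arc; by hypothesis $c$ is not the circle $C$ through $(i,l)$ and $(j,k)$, so $i$ and $j$ lie in the same component of the complement of $c$, and a mod-$2$ winding-number argument applied to the segment from $i$ to $j$ shows $c$ carries an even number of between arcs. Summing over circles finishes the proof. You instead reduce the claim to the arithmetic statement $j \not\equiv i \pmod 2$ via the count $j-i-1 = b + 2s$, and then establish that parity working entirely inside $C$: the parity bipartition (every arc of a rayless noncrossing matching joins vertices of opposite parity) makes the vertices of $C$ alternate, and a Jordan-curve orientation argument pins down that the traversal with $D_1$ on the right runs $i \to l$ along $(i,l)$ and $k \to j$ along $(j,k)$, ruling out Case A. The two proofs invoke the hypothesis in different places: the paper uses it to place the between arc on a circle disjoint from $\{i,j\}$, while you use it to guarantee the vertical segment $QP'$ misses $C$. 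Your reduction $b \equiv j-i-1$ is a clean observation the paper does not make, and you only examine $C$; the paper's approach yields the slightly stronger local fact that each circle individually contributes an even number of between arcs, which you do not need here.
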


\begin{proof}
Given some circle $c$ in ${\bf a}w({\bf b})$ let $W(c, z)$ be the winding number of $c$ about $z$.  Consider a line segment between $z_0, z_1\notin c$ which is transverse to $c$. Say that the number of intersections of that segment with the circle $c$ is some number $r$. Then mod 2 intersection theory tells us that
$$ W(c, z_0) \equiv W(c, z_1) + r \textup{ (mod } 2).$$

Take some arc between $(i,l)$ and $(j,k)$. Say this arc is part of circle $c$ in ${\bf a}w({\bf b})$. Note that the circle $c$ is not the same circle that $(i,l)$ and $(j,k)$ are part of. Vertices $i$ and $j$ are both either inside or outside the circle $c$. Consider the vertices $i$ and $j$ to be points in the plane. Then
$$W(i,c) \equiv W(j,c) \textup{( mod} 2).$$ Putting this together with the previous statement, say the segment between $i$ and $j$ intersects $c$ exactly $r$ times. Then
$W(i,c) \equiv W(j,c) + r \textup{( mod} 2)$ and so $r \equiv 0 \textup{( mod} 2)$. We conclude that $c$ has an even number of intersections with the line segment between $i$ and $j$. 

Each arc in $c$ with both endpoints between $i$ and $j$ contributes an even number of intersections, so the number of arcs between $(i,l)$ and $(j,k)$ must also be even. Since this is true for each circle with some arc between $(i,l)$ and $(j,k)$, there is an even number of arcs between $(i,l)$ and $(j,k)$. 
\end{proof}

\begin{lemma}\label{nestarcs}
Within the collection of arcs between $(i,l)$ and $(j,k)$ there are two adjacent arcs that are part of the same circle.
\end{lemma}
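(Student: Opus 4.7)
The plan is to argue by contradiction. Suppose no two nested-adjacent between arcs of $(i,l)$ and $(j,k)$ lie on a common circle of $\mathbf{a}w(\mathbf{b})$, and let $(p_1, q_1), \ldots, (p_r, q_r)$ denote the between arcs in nesting order (with $(p_1, q_1)$ outermost, so $p_1 < \cdots < p_r < j$ and $k < q_r < \cdots < q_1 < l$). Write $C_m$ for the circle containing $(p_m, q_m)$; the hypothesis is that $C_m \neq C_{m+1}$ for all $m$, and I aim to contradict this.

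I would first establish a structural observation from noncrossing of $\mathbf{a}$: for each $m$, every $\mathbf{a}$-arc with an endpoint in the axis interval $(p_m, p_{m+1})$ has its other endpoint also in $(p_m, p_{m+1})$, and analogously for $(q_{m+1}, q_m)$. Indeed, an $\mathbf{a}$-arc with one endpoint in $(p_m, p_{m+1})$ and the other in $(q_{m+1}, q_m)$ would itself be a between arc of $(i,l), (j,k)$ nested strictly between $(p_m, q_m)$ and $(p_{m+1}, q_{m+1})$, contradicting the adjacency of these two arcs; any other placement would cross one of the nested arcs $(i,l), (j,k), (p_m, q_m), (p_{m+1}, q_{m+1})$.

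Next, I would trace the circle $C_m$ starting from $p_m$ along its $\mathbf{b}$-edge, which has the form $(p_m, u)$ for some vertex $u$. If $u = p_{m+1}$, the alternating walk along $\mathbf{a}$- and $\mathbf{b}$-edges immediately forces $C_m$ to visit $p_m, q_m, p_{m+1}, q_{m+1}$, so $C_m = C_{m+1}$ and we obtain the desired contradiction. If $u$ lies in the interior of $(p_m, p_{m+1})$, then by the structural observation the alternating walk from $u$ is confined to vertices of $(p_m, p_{m+1})$ until it must exit; a finite counting argument using noncrossing of $\mathbf{b}$ inside this strip shows the exit vertex is $p_{m+1}$, again yielding $C_m = C_{m+1}$.

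The main obstacle is the remaining case in which $u$ lies outside $[p_m, p_{m+1}]$, so the $\mathbf{b}$-arc $(p_m, u)$ spans over $p_{m+1}$ and traps it under a longer arc. Here I would apply the preceding lemma to a sub-configuration cut out by $\mathbf{b}$-arc $(p_m, u)$: the nested $\mathbf{a}$-arcs inside $(p_m, u)$ (including $(p_{m+1}, q_{m+1})$) together with this $\mathbf{b}$-arc form a smaller system in which the even-count conclusion of the preceding lemma, combined with the assumption $C_m \neq C_{m+1}$ and the symmetric analysis at $q_m$, produces a parity contradiction. Setting up this reduction cleanly — and in particular making sure that the sub-configuration genuinely satisfies the hypotheses of the preceding lemma — is the technically most delicate step, and may require an auxiliary induction on the total number of $\mathbf{b}$-arcs in the configuration.
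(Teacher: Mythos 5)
Your proposal takes a genuinely different route from the paper, and in its current form it has gaps. The paper does not attempt to trace circles edge by edge. Instead it applies the parity conclusion of the preceding lemma \emph{twice over}: for any two circles $c$, $c'$ each meeting the region between $(i,l)$ and $(j,k)$, and for any pair of arcs of $c$ lying in that region, the winding-number argument shows $c'$ contributes an even number of arcs strictly nested between them. The proof is then a short finiteness (infinite-descent) argument: if no two nesting-adjacent between-arcs lay on a common circle, then starting from any pair of arcs of a single circle one could always find at least two arcs of a \emph{different} circle strictly between them, and iterating this produces an unbounded strictly nested chain, contradicting finiteness of the vertex set. That is the entire proof.

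There are two concrete problems with your circle-tracing argument. First, your structural observation only constrains $\mathbf{a}$-arcs: any $\mathbf{a}$-arc with an endpoint in $(p_m,p_{m+1})$ stays inside that interval. It says nothing about $\mathbf{b}$-arcs. In your middle case you assert that the alternating walk starting at $u\in(p_m,p_{m+1})$ is ``confined to vertices of $(p_m,p_{m+1})$ until it must exit'' and that the exit vertex is $p_{m+1}$, but the very next $\mathbf{b}$-edge out of $u'$ can legitimately jump to a vertex anywhere outside $[p_m,p_{m+1}]$ (for instance beyond $p_{m+1}$, or outside $(i,l)$ entirely) while remaining noncrossing with the $\mathbf{b}$-arc $(p_m,u)$. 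Nothing forces the walk to exit precisely at $p_{m+1}$, so this case is not closed. Second, you explicitly acknowledge that the remaining case (the $\mathbf{b}$-arc from $p_m$ spanning past $p_{m+1}$) is unresolved and may require a separate induction. Taken together, the proposal is not yet a proof, and the route you have chosen is substantially more delicate than necessary — the parity-plus-finiteness argument sidesteps all of the edge-tracing casework.
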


\begin{proof}
By the above argument there is an even number of vertices between $i$ and $j$. Given two circles $c$ and $c'$ with arcs that lie between $(i,l)$ and $(k,j)$ the argument in the previous proof says that there are an even number of arcs from $c'$ between any pair of arcs in $c$. 

Because there are finitely many vertices between $i$ and $j$, there must be some pair of arcs from a single circle that have no arcs from any other circle between them. Otherwise we could find infinitely many arcs between $(i,l)$ and $(j,k)$.
\end{proof}

\begin{lemma}\label{nestonly}
Given ${\bf a},{\bf b}\in B^{n/2, n/2}$ if every minimal sequence $({\bf a},{\bf  a_1}, \ldots, {\bf a_{m-1}}, {\bf b})$ begins with ${\bf a}\rightarrow {\bf a_1}$ then there exists a minimal sequence $(\rightarrow,\rightarrow, \cdots, \rightarrow, \rightarrow)$. 
\end{lemma}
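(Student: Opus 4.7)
The plan is to translate the hypothesis into a combinatorial condition on the one-manifold ${\bf a}w({\bf b})$ and then construct an all-$\rightarrow$ sequence circle by circle. First I would observe that a $\leftarrow$ move from ${\bf a}$ is specified by choosing a nested pair of arcs $(i,l),(j,k) \in {\bf a}$ with $i<j<k<l$ and replacing them by $(i,j),(k,l)$; the resulting matching ${\bf a_1}$ satisfies $|{\bf a_1}w({\bf b})| = |{\bf a}w({\bf b})|+1$ exactly when the two nested arcs belong to the same circle of ${\bf a}w({\bf b})$ (in which case the move splits that circle). By Proposition \ref{CircleDist}, this is precisely the condition for ${\bf a}\leftarrow {\bf a_1}$ to be the first step of a minimal sequence. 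Hence the hypothesis of the lemma is equivalent to the statement that within each circle of ${\bf a}w({\bf b})$ the ${\bf a}$-arcs are pairwise non-nested.

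The second step is to identify the structure of such a circle. Let $c$ be a circle of ${\bf a}w({\bf b})$ with vertex set $j_1<j_2<\cdots<j_{2q}$. Since noncrossing pairwise non-nested arcs on $2q$ vertices that use all of them must be the consecutive pairs, the ${\bf a}$-arcs of $c$ are $(j_1,j_2),(j_3,j_4),\ldots,(j_{2q-1},j_{2q})$. The ${\bf b}$-arcs of $c$ form a noncrossing matching on the same vertices that, together with the ${\bf a}$-arcs, forms a single cycle; a short induction on $q$ (using noncrossing together with the single-cycle requirement to force the ${\bf b}$-arc through $j_1$ to go to $j_{2q}$) shows that the ${\bf b}$-arcs of $c$ must be exactly the nested configuration $(j_2,j_3),(j_4,j_5),\ldots,(j_{2q-2},j_{2q-1}),(j_1,j_{2q})$. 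This is precisely the setup of Lemma \ref{nesty}.

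With this structure in hand, I would construct the desired sequence by applying the algorithm from the proof of Proposition \ref{CircleDist} to each circle independently. For a single circle $c$, choose any ${\bf b}$-arc of $c$ of the form $(j_a,j_{a+1})$ with $a$ even (such a ``sibling'' arc exists whenever $q \geq 2$); the arcs of the current matching incident on $j_a$ and $j_{a+1}$ are the consecutive pair $(j_{a-1},j_a),(j_{a+1},j_{a+2})$, which are unnested, so the move replacing them by $(j_a,j_{a+1}),(j_{a-1},j_{a+2})$ is $\rightarrow$ and splits off the 2-cycle on $j_a,j_{a+1}$. A direct relabeling check shows the remaining part of $c$ inherits the same Lemma \ref{nesty} structure on one fewer pair of vertices, so iteration produces $q-1$ consecutive $\rightarrow$ moves. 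Because distinct circles of ${\bf a}w({\bf b})$ have disjoint vertex sets, the moves on different circles act on disjoint arcs of ${\bf a}$ and do not interfere, so the per-circle sequences concatenate into a global sequence ${\bf a}\rightarrow \cdots \rightarrow {\bf b}$ of total length $\sum_c (q_c - 1) = n/2 - |{\bf a}w({\bf b})|$, which equals $d({\bf a},{\bf b})$ by Proposition \ref{CircleDist} and is therefore minimal.

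The hard part is the structural claim in the middle step, namely that within each circle of ${\bf a}w({\bf b})$ the ${\bf b}$-arcs must lie in precisely the nested pattern of Lemma \ref{nesty}. This is where the noncrossingness of ${\bf b}$ and the single-cycle nature of $c$ are both used in an essential way, and without it the reduction to Lemma \ref{nesty} would fail; once the structural claim is in place, the remainder of the argument is essentially bookkeeping on the algorithm already developed for Proposition \ref{CircleDist}.
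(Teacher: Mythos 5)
Your overall strategy—translate the hypothesis into the statement that the $\mathbf{a}$-arcs within each circle of $\mathbf{a}w(\mathbf{b})$ are pairwise unnested, identify the circle structure as that of Lemma \ref{nesty}, and then run the algorithm from Proposition \ref{CircleDist}—matches the paper's. However, there is a real gap in the first step, and it is exactly the step the paper devotes two preparatory lemmas to.

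You characterize a $\leftarrow$ move as ``choosing a nested pair of arcs $(i,l),(j,k)\in\mathbf{a}$ and replacing them by $(i,j),(k,l)$,'' and then claim that Proposition \ref{CircleDist} shows the hypothesis is \emph{equivalent} to the pairwise-unnested condition. But a $\leftarrow$ move on $(i,l),(j,k)$ is only legal when no other arc of $\mathbf{a}$ lies between them (otherwise the new matching would cross), so the hypothesis directly rules out only \emph{adjacent} nested pairs from the same circle. It does not immediately rule out a non-adjacent nested pair $(i,l),(j,k)$ in the same circle with arcs from other circles sitting between them; on such a pair the $\leftarrow$ move cannot even be performed. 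Deducing ``pairwise unnested per circle'' from ``no adjacent nested pair in the same circle'' is precisely the content that the paper supplies via the winding-number lemma (which shows that, between a minimally nested same-circle pair, each other circle contributes an even number of arcs) and Lemma \ref{nestarcs} (which then locates two adjacent nested arcs from a single circle between them, so that a $\leftarrow$ move is available after all). Your appeal to Proposition \ref{CircleDist} alone does not carry this load, and as written the ``equivalence'' is asserted rather than proved. The remainder of your argument (the forced nested pattern of the $\mathbf{b}$-arcs in each circle, and the per-circle all-$\rightarrow$ peeling via the Proposition \ref{CircleDist} algorithm) is consistent with the paper's Lemma \ref{nesty} and is fine, modulo the routine induction you defer.
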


\begin{proof}

Assume that all minimal sequences for ${\bf a}$ and ${\bf b}$ begin with $\rightarrow$. In particular this means that there does not exist a pair of nested arcs in ${\bf a}$ that are part of the same circle in ${\bf a}w({\bf b})$ and have no other arc from that same circle between them. (The existence of such a pair would violate our assumption that all minimal sequences begin with $\rightarrow$.)

Say there is some pair of nested arcs $(i,l), (j,k) \in {\bf a}$ that are part of the same circle in ${\bf a}w({\bf b})$. By Lemma \ref{nestarcs} there must be a pair of nested, adjacent arcs in ${\bf a}$ contributed by some other circle of ${\bf a}w({\bf b})$ lying between $(i,l)$ and $(j,k)$. In this case, we could perform $\leftarrow$ on this pair once again violating our assumption that all minimal sequences must begin with $\rightarrow$. We conclude that the no pair of nested arcs in ${\bf a}$ are part of the same circle in ${\bf a}w({\bf b})$.

Since we have just shown the arcs in each circle in ${\bf a}w({\bf b})$ are pairwise unnested, they have the form 
$$(i_1, i_2), (i_3,i_4), \ldots, (i_{t-1}, i_t) \textup{ where } i_1<i_2<\cdots <i_{t-1}<i_t.$$ The arcs in ${\bf b}$ that comprise the rest of this circle necessarily have the form $$(i_2, i_3), \ldots, (i_{t-2}, i_{t-1}), (i_1, i_t).$$ 
Consider the minimal sequence of moves for ${\bf a}$ and ${\bf b}$ obtained by the algorithm in the proof of Proposition \ref{CircleDist}. By Lemma \ref{nesty} each move has the form $\rightarrow$.  Therefore we have a minimal sequence  consisting only of $\rightarrow$ moves, and the lemma is proven.
\end{proof}

Having established Lemma \ref{nestonly} we can now easily finish the proof of Proposition \ref{Khovarrows}.

We proceed using induction. If the distance between matchings is 1, the result is clear since there is only one move between matchings in any minimal sequence. 

Now assume the lemma is true for all ${\bf a},{\bf b}$ with $d({\bf a},{\bf b}) \leq m-1$. Assume that $d({\bf a},{\bf b}) = m$. Let $({\bf a} = {\bf a_0}, {\bf a_1}, \ldots, {\bf a_{m-1}}, {\bf a_m} = {\bf b})$ be a minimal sequence of moves from ${\bf a}$ to ${\bf b}$ such that the move $\leftarrow$ occurs as early as possible. 

If ${\bf a_0} \leftarrow {\bf a_1}$ we can apply the inductive assumption to get ${\bf a_1}\succ {\bf c}\prec {\bf a_m}$ with $d({\bf a_1}, {\bf a_m}) = d({\bf a_1},{\bf c}) + d({\bf c}, {\bf a_m})$ and $({\bf a_1} \leftarrow \cdots \leftarrow {\bf c} \rightarrow \cdots \rightarrow {\bf a_m})$. This proves the lemma in this case since we have a minimal sequence of the form $({\bf a} \leftarrow {\bf a_1}\leftarrow \cdots \leftarrow {\bf c} \rightarrow \cdots \rightarrow {\bf b})$.

If ${\bf a_0}\rightarrow {\bf a_1}$ then there does not exist a minimal sequence for ${\bf a}$ and ${\bf b}$ with first move $\leftarrow$. By Lemma \ref{nestonly} there exists a minimal sequence of the form $({\bf a}\rightarrow \cdots \rightarrow {\bf b})$ in this case. 
\end{proof}

\end{document}